\newtheorem{Thm}{Theorem}[section]
\newtheorem*{MainThm}{Main Theorem}
\newtheorem{Lem}[Thm]{Lemma}
\newtheorem{Cor}[Thm]{Corollary}
\theoremstyle{definition}
\newtheorem*{Fact}{Fact}
\newtheorem{Facts}[Thm]{Facts}
\newtheorem{Def}[Thm]{Definition}
\theoremstyle{remark}
\newtheorem{Rem}[Thm]{Remark}
\newtheorem{Rems}[Thm]{Remarks}
\numberwithin{equation}{section}
\DeclareMathOperator{\dom}{dom}
\DeclareMathOperator{\eval}{eval}
\newcommand{\al}[1]{\aleph_{#1}}          % aleph_n
\newcommand{\fmax}{f^\textrm{max}}
\newcommand{\gmin}{g^\textrm{min}}
\newcommand{\Bmin}{B^\textrm{min}}
\newcommand{\myc}{c^{\exists}}
\newcommand{\mycfa}{c^{\forall}}
\newcommand{\esm}{\prec}  % elem. submodel
\newcommand{\std}[1]{\check{#1}}
\newcommand{\forc}{\Vdash}
\newcommand{\n}[1]{\underaccent{\tilde}{#1}}
\DeclareMathOperator{\POSS}{POSS}
\DeclareMathOperator{\poss}{poss}
\DeclareMathOperator{\VAL}{VAL}
\DeclareMathOperator{\nor}{nor}
\DeclareMathOperator{\minnor}{minnor}
\DeclareMathOperator{\maxnor}{maxnor}
\DeclareMathOperator{\maxsupp}{maxsupp}
\DeclareMathOperator{\maxposs}{maxposs}
\DeclareMathOperator{\val}{val}
\DeclareMathOperator{\half}{half}
\newcommand{\cc}{\mathfrak{c}}
\newcommand{\cd}{\mathfrak{d}}
\newcommand{\cS}{\mathbf{\Sigma}}
\newcommand{\cSp}{\mathbf{\Sigma}_+}
\newcommand{\cK}{\mathbf{K}}
\newcommand{\nugen}{\n\nu^\text{gen}}
\DeclareMathOperator{\trnk}{trunk}
\DeclareMathOperator{\trnklg}{trnklg}
\newcommand{\supp}{\textrm{supp}}
\newcommand{\suppls}{\textrm{supp}^\textrm{ls}}
\newcommand{\norls}{\textrm{nor}^\textrm{ls}}
\begin{document}
\subjclass[2000]{03E17;03E40}
\date{2010-02-04}

%\title{Perfect characteristics}
\title{Creature forcing and large continuum: The joy of halving}
%    Information for first author
\author[Jakob Kellner]{Jakob Kellner$^\dag$}
%    Address of record for the research reported here
\address{Kurt G\"odel Research Center for Mathematical Logic\\
 Universit\"at Wien\\
 W\"ahringer Stra\ss e 25\\
 1090 Wien, Austria}
\email{kellner@fsmat.at}
\urladdr{http://www.logic.univie.ac.at/$\sim$kellner}
\thanks{$^\dag$ supported by European Union FP7 project PERG02-GA-2207-224747
and Austrian science funds FWF project P21651-N13.}
%    Information for second author
\author[Saharon Shelah]{Saharon Shelah$^\ddag$}
%\twoaddress{
\address{Einstein Institute of Mathematics\\
Edmond J. Safra Campus, Givat Ram\\
The Hebrew University of Jerusalem\\
Jerusalem, 91904, Israel\\
and
Department of Mathematics\\
Rutgers University\\
New Brunswick, NJ 08854, USA}%{4}
\email{shelah@math.huji.ac.il}
\urladdr{http://www.math.rutgers.edu/$\sim$shelah}
\thanks{
$^\ddag$
supported by the United States-Israel
  Binational Science Foundation (Grant no. 2002323),
publication 961.}

%========================================================================
%  ABSTRACT
%========================================================================

\begin{abstract}
  For $f,g\in\omega^\omega$ let $\mycfa_{f,g}$ be the minimal
  number of uniform $g$-splitting trees needed to
  cover the uniform $f$-splitting tree, i.e., for every
  branch $\nu$ of the $f$-tree, one of the
  $g$-trees contains $\nu$.
  Let $\myc_{f,g}$ be the dual notion: For every branch $\nu$,
  one of the $g$-trees guesses $\nu(m)$ infinitely often.
  We show that it is consistent that
  $\myc_{f_\epsilon,g_\epsilon}=\mycfa_{f_\epsilon,g_\epsilon}=\kappa_\epsilon$
  for continuum many pairwise different cardinals $\kappa_\epsilon$
  and suitable pairs $(f_\epsilon,g_\epsilon)$.
  For the proof we introduce a new mixed-limit creature
  forcing construction.
\end{abstract}
\maketitle

\section*{Introduction}

We continue the investigation in~\cite{MR2499421} of the following cardinals
invariants:

Let $f,g$ be functions from $\omega$ to $\omega$ such that 
$f(n)>g(n)$ for all $n$ and furthermore $\lim(f(n)/g(n))=\infty$.
An $(f,g)$-slalom is a sequence $Y=(Y(n))_{n\in\omega}$
such that $Y(n)\subseteq f(n)$ and $|Y(n)|\leq g(n)$ for all $n\in\omega$.
A family $\mathcal Y$ of $(f,g)$-slaloms is a $(\forall,f,g)$-cover, if
for all $r\in \prod_{n\in\omega} f(n)$ there is an $Y\in \mathcal Y$ such that
$r(n)\in Y(n)$ for all $n\in\omega$.
The cardinal characteristic $\mycfa_{f,g}$ is defined as the minimal size of a
$(\forall,f,g)$-cover.

There is also a dual notion:
A family $\mathcal Y$ of $(f,g)$-slaloms is an $(\exists,f,g)$-cover, if
for all $r\in \prod_{n\in\omega} f(n)$ there is an $Y\in \mathcal Y$ such that
$r(n)\in Y(n)$ for infinitely many $n\in\omega$.
We define $\myc_{f,g}$ to be  
the minimal size of an $(\exists,f,g)$-cover 

It is easy to see that $\al0<\myc_{f,g}\leq \mycfa_{f,g}\leq 2^{\al0}$.

Answering a question of Blass related to~\cite{MR1234278}, Goldstern and the
second author~\cite{MR1201650} showed how to force $\al1$ many different values
to $\mycfa_{f,g}$. More specifically, assuming CH and given a sequence
$(f_\epsilon,g_\epsilon,\kappa_\epsilon)_{\epsilon\in\al1}$ of natural
functions $f_\epsilon,g_\epsilon$ with ``sufficiently different growth rate''
and cardinals $\kappa_\epsilon$ satisfying
$\kappa_\epsilon^{\al0}=\kappa_\epsilon$, there is a cardinality preserving
forcing notion that forces $\mycfa_{f_\epsilon,g_\epsilon}=\kappa_\epsilon$ for
all $\epsilon\in\al1$.
In~\cite{MR2499421} we additionally forced
$\myc_{f_\epsilon,g_\epsilon}=\mycfa_{f_\epsilon,g_\epsilon}=\kappa_\epsilon$.

In this paper, we improve%
\footnote{%
  Note that once we have $\al1$ many different cardinals between $\al0$ and the
  continumm, then the continumm has to be much bigger than $\al1$.
}
this result to {\em continumm many} characteristics
$\myc_{f_\epsilon,g_\epsilon}=\mycfa_{f_\epsilon,g_\epsilon}$ in the extension
(something which is a lot easier for $\mycfa$ only, as it is done
in~\cite{MR2425542}).

So the main theorem is:
\begin{MainThm}\label{thm:uncountable}
  Assume that CH holds, that
  $\mu=\mu^{\al0}$, and that
  $\kappa_\epsilon<\mu$
  satisfies $\kappa_\epsilon^{\al0}=\kappa_\epsilon$
  for all $\epsilon\in \mu$.
  Then there is an $\omega^\omega$-bounding,
  cardinality preserving forcing notion $P$ that forces
  the following: $2^{\al0}=\mu$, 
  and there are functions $f_\epsilon,g_\epsilon$
  for $\epsilon\in\mu$ such that
  $\myc_{f_\epsilon,g_\epsilon}=
  \mycfa_{f_\epsilon,g_\epsilon}=\kappa_\epsilon$.
\end{MainThm}
(We can find such $\mu$ and $(\kappa_\epsilon)_{\epsilon\in\mu}$ such that the
$\kappa_\epsilon$ are pairwise different, then we get continuum many pairwise
different invariants in the extension.)

The construction builds on the theory of creature forcing, which is described
in the monograph~\cite{MR1613600} by Ros{\l}anowski and the second author.
However, this paper should (at least formally) be quite self contained 
concerning creature forcing theory; we do however (in~\ref{facts:214f}) cite a 
result of~\cite{MR2499421}.

This paper has two parts: In the first part, we introduce a new creature
forcing construction (to give some ``creature keywords'':
somewhat in between a restricted product and an iteration,
with countable support, basically a lim-inf construction but allowing 
for lim-sup conditions as well).
Using this construction,
we get a much nicer and more general proof of properness compared to the
construction in~\cite{MR2499421}.

This construction (actually a simple case, in particular a pure lim-inf case
without downwards memory) is used the second part to construct the required
forcing. It turn out that we can use very similar proofs to the ones
in~\cite{MR2499421} to show that the furcing notion constructed this way
actually does what we want.

\section{The creature forcing construction}

\subsection{The basic definitions}

\begin{Def}
  Let $I^*$ be some (index) set, and for each
  $i\in I^*$ and $n\in\omega$ fix a finite set
  $\POSS^*_{{=}n,\{i\}}$.
  \\
  For $u\subseteq I^*$ and $n\in\omega$  we set
   \[
     \POSS_{n,u}=\{\eta:\, \eta\text{ is a function},\ \dom(\eta)=n\times u, 
       \text{ and } \eta(m,i)\in \POSS^*_{{=}m,\{i\}}\text {for all }
       m\in n
     \text{ and }i\in u\}.
   \]
\end{Def}
The name $\POSS$
is chosen because this is the set of possibile trunks of conditions,
see below.

We will use the following notation for restrictions of $\eta\in \POSS_{n,u}$:
For $0\leq m\leq n$ and for $w\subseteq u$ we use $\eta\restriction m\in
\POSS_{m,u}$, $\eta\restriction w\in \eta\in \POSS_{n,w}$ and $\eta\restriction
(m\times w)\in \POSS_{m,w}$ (with the obvious meaning).
We will sometimes identify an $\eta\in \POSS_{n,\{i\}}$, i.e., a function with
domain $n\times \{i\}$, with the according function with domain $n$.

\begin{Def}
   $\VAL_{n,u}$ is the set of functions $\mathbf f: \POSS_{n,u}\to\POSS_{n+1,u}$
   satisfying $\mathbf f(\eta)\restriction n=\eta$ for all $\eta\in\POSS_{n,u}$.
\end{Def}
(This is the set of possible elements of the value-set
$\val(\cc)$ of an $n$-ml-creature, see below.)

\begin{Def}\label{def:creature}
  Fix $n\in\omega$. An $n$-ml-creature parameter $\mathfrak{p}_n$ consists of
  \begin{itemize}
    \item $\cK(n)$, the set of $n$-ml-creatures,
    \item   the functions $\supp$, $\suppls$,
      $\nor$, $\norls$,  $\val$ and $\cS$, all
      with domain $\cK(n)$,
  \end{itemize}
  satisfying the following (for $\cc\in \cK(n)$):
  \begin{enumerate}
    \item $\suppls(\cc)\subseteq \supp(\cc)$ are
      finite%
      \footnote{We will later even require: 
      There is a functions $\maxsupp:\omega\to\omega$ such that
      every $n$-ml-creature $\cc$ satisfies $|\supp(\cc)|<\maxsupp(n)$.}
      subsets of $I^*$. We call $\supp(\cc)$ the support of $\cc$.
    \item $\nor(\cc)$ (called norm) and $\norls(\cc)$ are nonnegative 
      reals.%
      \footnote{%
        More particularly, elements of some
        countable set containing $\mathbb{Q}$ and closed under the 
        functions we need, such as $\ln$ etc.
        We can even restrict $\nor$ and $\norls$  to values 
        in $\mathbb{N}$. However, this sometimes leads to slightly
        cumbersome and less natural definitions.%
     }
    \item $\val(\cc)$ is a nonempty subset of $\VAL_{n,\supp(\cc)}$.
    \\
    For $\eta\in \POSS_{n,\supp(\cc)}$, we set
      $\cc[\eta]\coloneqq\{\mathbf f(\eta):\, \mathbf f\in \val(\cc)\}$.
      So $\cc[\eta]$ is a nonempty subset of $\POSS_{n+1,\supp(\cc)}$, and every
      $\nu\in \cc[\eta]$ extends $\eta$.
    \item\label{item1:wq} $\cS(\cc)$, the set
      of ml-creatures that are stronger than (or: successors of)
      $\cc$, is a  subset of $\cK(n)$ such that
      for all $\cd\in \cS(\cc)$ the following holds:
      \begin{enumerate}
        \item if $\cd'\in\cS(\cd)$, then $\cd'\in\cS(\cc)$
          (i.e., $\cS$ is transitive).
        \item $\cc\in \cS(\cc)$ (i.e., $\cS$ is reflexive).
        \item $\supp(\cd)\supseteq \supp(\cc)$ and
          $\suppls(\cd)\cap \supp(\cc)\subseteq \suppls(\cc)$.
	\item\label{item2:wq} $\cd[\eta]\restriction \supp(\cc)\subseteq
               \cc[\eta\restriction\supp(\cc)]$
               for every $\eta\in \POSS(n,\supp(\cd))$.
      \end{enumerate}
  \end{enumerate}
\end{Def}
Of course, with $\cd[\eta]\restriction \supp(\cc)$ we mean $\{\nu\restriction
\supp(\cc):\, \nu\in\cd[\eta]\}$.

\begin{Rems}\label{rem:simplecase}
  \begin{itemize}
    \item
      ``ml'' stands for ``mixed limit'' (the construction mixes lim-sup
      and lim-inf aspects).
      ``ls'' stands for lim sup; 
      $\suppls$ and $\norls$ will correspont to
      the part of the forcing that corresponds to a lim-sup
      sequence. The objects $\supp$ and $\nor$
      will correspond  to the lim-inf part. 
    \item Our application will be a ``pure lim-inf'' forcing:
      We can completely ignore $\suppls$ and $\norls$, or,
      more formally, we can set 
      $\suppls(\cc)=\supp(\cc)$ and $\norls(\cc)=n$
      for all $n$-ml-creatures $\cc$.
    \item Usually we will also have:
      if $\cd\in\cS(\cc)$ then $\nor(\cd)\leq \nor(\cc)$ and
      $\norls(\cd)\leq \norls(\cc)$, but this is not required for
      the following proofs.
    \item In our application (as well as in other potential applications)
      we will not really use $\val(\cc)$ (i.e.,
      a set of functions $\mathbf f$ each mapping every possible trunk $\eta$
      af height $n$ to one of height $n+1$).
      Instead, we will only need 
      $(\cc[\eta])_{\eta\in\POSS_{n,\supp(\cc)}}$ (i.e.,
      the function that
      assigns to each $\eta$ the 
      (nonempty, finite) set of possible extensions $\cc[\eta]$).

      We can formalize this simplification in our framework
      as the following additional requirement:

      Assume that $\mathbf f\in \VAL_{n,\supp(\cc)}$ is such
      that for all $\eta\in \POSS_{n,\supp(\cc)}$ there is 
      a $\mathbf g\in \val(\cc)$ such that $\mathbf f(\eta)= \mathbf g(\eta)$.
      Then $\mathbf f\in \val(\cc)$.
      Or, in other words: $\mathbf f\in \VAL_{n,\supp(\cc)}$ is in 
      $\val(\cc)$ iff $\mathbf f(\eta)\in \cc[\eta]$
      for all $\eta\in\POSS_{n,u^\cc}$.
    \item We could required the following, stronger property instead 
      of \ref{def:creature}.(\ref{item2:wq}) 
      (however, in the case referred to in the previous item, the two versions
      are equivalent anyway):
      
      For all $\mathbf f\in \val(\cd)$ there is some
      $\mathbf g\in \val(\cc)$ such that 
      for each 
      $\eta\in \POSS_{n,\supp(\cd)}$
      \[
        \mathbf f(\eta)\restriction \supp(\cc)=
        \mathbf g(\eta\restriction \supp(\cc)).
      \]
    \item
      Our application will even have the following property:
      $\cc[\eta]$ is  essentially independent of $\eta$;
      there is no ``downwards memory'', the creature does not look at 
      what is going on below.

      More exactly: We will define $\mathfrak p_n$ in a way so that
      for all $\eta,\eta'$ in $\POSS_{n,\supp(\cc)}$ and 
      $\nu\in\cc[\eta]$  the possibility
      $\eta'\cup (\nu\cap ({n}\times I))$ is in $\cc[\eta']$.
    \item So while the application in this paper only uses a simpler setting,
      we give the proof of properness for the more general setting. The reason
      is that this properness-proof is not more complicated for the general
      case, and we hope that we can use this general case for other
      applications.
  \end{itemize}
\end{Rems}

\begin{Def}
  A forcing parameter $\mathfrak{p}$ is a sequence
  $(\mathfrak{p}_n)_{n\in\omega}$ of 
  $n$-ml-creature parameters. 
  Given such a $\mathfrak{p}$, we define 
  the forcing notion $Q_\mathfrak{p}$:
  A condition $p$ consists of 
  $\trnklg(p)\in\omega$, the
  $n$-ml-creatures $p(n)$ for $n\geq \trnklg(p)$, and an object
  $\trnk(p)$ such that:
  \begin{itemize}
    \item 
      $\supp(p(n))\subseteq \supp(p(n+1))$
      for all $n\geq \trnklg(p)$.
    \item
      We set $\dom(p)\coloneqq\bigcup_{n\in\omega} \supp(p(n))$, and for
      $i\in\dom(p)$ we set
      $\trnklg(p,i)=\min\{n\geq \trnklg(p):\, i\in \supp(p(n))\}$.
    \item $\trnk(p)$ is a function with domain
      $\{(m,i):\, i\in\dom(p),m<\trnklg(p,i)\}$ such that
      $\trnk(p)(m,i)$ is in $\POSS^*_{=m,\{i\}}$.
      For $i\in\dom(p)$, we set
      $\trnk(p,i)=\trnk(p)\restriction \{i\}$ (which we
      identify with a function with domain $\trnklg(p,i)$).
    \item $\liminf_{n\to\infty}\nor(p(n))=\infty$.
    \item For each $i\in \dom(p)$ the set 
      $X=\{\norls(p(n)):\, i\in\suppls(p(n))\}$ is unbounded,
      in other words:
      $\limsup(X)=\infty$. In particular there are 
      infinitely many $i$ with
      $i\in\suppls(p(n))$.
  \end{itemize}
\end{Def}

For better readability, we will write $\supp(p,n)$ instead of $\supp(p(n))$,
and the same for $\nor$ etc. 

Note that $Q_\mathfrak p$ could be empty (for example, if all norms of
ml-creatures are bounded by a universal constant). In the following we will
always assume that $Q_\mathfrak p$ is nonempty.

We still have to define the order on $Q_\mathfrak p$. Before we can do this,
we need another notion: $\poss(p,n)$, the sets of elements of
$\POSS_{n,\dom(p)}$ that are ``compatible with $p$'':
\begin{Def}
  For a condition $p$ (or just an according finite sequence of cratures together
  with a sufficient part of the trunk), 
  we define $\poss(p,n)$ as a subset of $\POSS_{n,\dom(p)}$ 
  be induction on $n$. 
  If $n\leq\trnklg(p)$, then $\poss(p,n)$ contains the singleton 
  $\trnk(p)\restriction (n\times \dom(p))$.
  Otherwise $\poss(p,n)$ consists of those $\nu\in \POSS_{n,\dom(p)}$ such that
  $\nu$ is compatible%
  \footnote{%
  I.e., $\nu(m,i)=\trnk(p)(m,i)$
  for all $m<\min(n,\trnklg(p,i))$.
  }
  with $\trnk(p)$ and such that
  $\nu \restriction \supp(p,n)\in p(n)[\eta\restriction \supp(p,n)]$
  for some $\eta\in \poss(p,n-1)$.
\end{Def}

\begin{Def}
  For $p,q\in Q_{\mathfrak p}$, we set $q\leq p$ if the following holds:
  \begin{itemize}
    \item $\trnklg(q)\geq \trnklg(p)$.
    \item  If $n\geq \trnklg (q)$ then
      \begin{itemize}
        \item  $q(n)\in \cS(p,n)$, 
        \item  $\supp(q,n)\cap \dom(p)=\supp(p,n)$, 
            (This implies:
            $\trnklg(q,i)$ is the maximum of
            $\trnklg(p,i)$ and $\trnklg(q)$ for all $i\in\dom(p)$.)
        \item $\suppls(q,n)\cap \dom(p)\subseteq \suppls(p,n)$. 
      \end{itemize}
    \item $\trnk(q)$ extends $\trnk(p)$ (as function), i.e., 
      $\trnk(q)(m,i)=\trnk(p)(m,i)$ whenever $i\in\dom(p)$ and $m<\trnklg(p,i)$.
    \item $\trnk(q)\restriction(\trnklg(q)\times\dom(p))
      \in \poss(p,\trnklg(q))$.
  \end{itemize}
\end{Def}

\begin{Rem}
     Note that our ml-creatures have an
      ``answer''  $\cc[\eta]$ to all 
      $\eta\in\POSS_{n,\supp(c)}$; so in particular  $p(n)$
      has answers to 
      all $\eta\notin\poss(p,n)$.
      In this respect, our creatures carry a lot of seemingly 
      irrelevant information. This is neccessary, however,
      to allow simple proofs of properness and
      rapid reading: this way
      we can, e.g., start with a condition $p$,
      then increase the trunk to some height $h$, strengthen this new
      condition to some $q$, 
      and then ``merge'' $p$ and $q$, by setting $r(n)=p(n)$ for $n<h$
      and $r(n)=q(n)$ otherwise. This would not be possible if we dropped
      the information about ``impossible'' $\eta\in\POSS_{n,\supp(c)}$
      from the creatures.
\end{Rem}

\begin{Facts}
  \begin{itemize}
    \item Assume that  $p$ is a $Q_\mathfrak p$ condition, 
      $n\geq \trnklg(p)$, choose $u$ such that
      $\supp(p,n-1)\subseteq u\subseteq \dom(p)$ and
      $\eta\in \POSS_{n,u}$. Then
      we can modify $p$
      by enlarging the trunk-length to $n$ and 
      replacing part of the trunk by $\eta$.
      Let us call the resulting creature $p\wedge \eta$.
      (More formally:
      $\trnk(p\wedge \eta)(m,i)=\eta(m,i)$ if $m<n$ and $i\in u$,
      and $\trnk(p)(m,i)$ otherwise.) 
    \item $p\wedge \eta\leq p$ if $\eta\in\poss(p,n)$.
    \item $\{p\wedge \eta:\, \eta\in \poss(p,n)\}$
      is predense below $p$.
    \item We set $\nugen$ to be the name for
      $\bigcup_{p\in G}\trnk(p)$. So
      $Q_{\mathfrak p}$ forces that $\nugen$ is a function with 
      domain $\omega\times J$ for some 
      $J\subseteq I^*$.
      Note that it is not guaranteed that $J=I^*$. (But
      $p$ forces that $\dom(p)\subseteq J$ and that $\nugen\restriction
      (n\times \dom(p))\in \poss(p,n)$ for all $n\in\omega$.) 
    \item If $\eta\in \poss(p,n)$, then $p\wedge \eta \forc \varphi$ iff 
        $p \forc \eta\subset \nugen \rightarrow \varphi$.
  \end{itemize}
\end{Facts}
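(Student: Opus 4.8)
The plan is to treat all five items as routine unwindings of the definitions, isolating in advance the only genuine content: a handful of elementary properties of $\poss(\cdot,\cdot)$. The organising remark is that passing from $p$ to $p\wedge\eta$ merely discards the finitely many creatures $p(m)$ with $\trnklg(p)\le m<n$ and renames part of the trunk. Hence the two norm requirements are inherited automatically: an unbounded set of reals stays unbounded after removing finitely many elements, so the $\limsup$-condition on $\norls$ survives, and $\liminf\nor$ is unaffected by a finite initial segment; also $\dom(p\wedge\eta)=\dom(p)$, since $\supp(p,\cdot)$ is $\subseteq$-increasing.

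First I would establish, by induction on $n$ (using only $\val(\cc)\neq\emptyset$, hence $\cc[\eta]\neq\emptyset$, together with the coherence clause \ref{def:creature}.(\ref{item2:wq})): (i) $\poss(p,n)\neq\emptyset$, and every $\nu\in\poss(p,n)$ has an extension in $\poss(p,n+1)$; (ii) if $m\le n$, then $\nu\restriction(m\times\dom(p))\in\poss(p,m)$ for every $\nu\in\poss(p,n)$; (iii) if $q\le p$ and $\nu\in\poss(q,n)$, then $\nu\restriction(n\times\dom(p))\in\poss(p,n)$. From (i) it follows that any trunk can be lengthened: iterating $q\mapsto q\wedge\nu$ for suitable $\nu$ produces, for every $n$, a condition $q'\le q$ with $\trnklg(q')\ge n$.

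Next I would run through the items. For the first, I would verify that $p\wedge\eta$ satisfies the definition of a condition; the only point needing care is that $\trnk(p\wedge\eta)$ has exactly the prescribed domain: if $i\in\dom(p)$ with $\trnklg(p,i)<n$, then $i\in\supp(p,\trnklg(p,i))\subseteq\supp(p,n-1)\subseteq u$, so every coordinate $(m,i)$ with $m<n$ is supplied by $\eta$; for the remaining $i$, the coordinates below $n$ in $u$ come from $\eta$ and all others from $\trnk(p)$, and all values land in the right finite sets because $\eta\in\POSS_{n,u}$. For the second item I would check the four clauses of $\le$: $\trnklg$ only grows; above level $n$ the creatures and (ls-)supports coincide with those of $p$, so reflexivity of $\cS$ and $\supp(p,m)\subseteq\dom(p)$ suffice; $\trnk(p\wedge\eta)$ extends $\trnk(p)$ because wherever $\eta$ overrides we have $m<n$ and $m<\trnklg(p,i)$, and $\eta$ is compatible with $\trnk(p)$ as $\eta\in\poss(p,n)$; and the last clause is exactly $\eta\in\poss(p,n)$. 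For the third item, given $q\le p$ I would pass via (i) to $q'\le q$ with $\trnklg(q')\ge n$, set $\eta=\trnk(q')\restriction(n\times\dom(p))$, observe $\eta\in\poss(p,n)$ by (iii) and then (ii), and check $q'\le p\wedge\eta$ as in the second item (for the last clause, $\trnk(q')\restriction(\trnklg(q')\times\dom(p))\in\poss(p,\trnklg(q'))$ already restricts to $\eta$ at level $n$); so $q'$ witnesses that $q$ and $p\wedge\eta$ are compatible, giving predensity below $p$.

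For the fourth item: $\nugen$ is a function because any two conditions in $G$ have a common extension whose trunk extends both of theirs; it is dense below any $p$ with $i\in\dom(p)$ to force $\trnklg(q)>m$, and then $\trnklg(q,i)=\max(\trnklg(p,i),\trnklg(q))>m$, so $\dom(\nugen)=\omega\times J$ with $J=\bigcup_{p\in G}\dom(p)$, where $J$ need not be $I^*$ since an index may stay out of all supports met by $G$; for $p\in G$, $\trnk(p)\subseteq\nugen$ gives $\dom(p)\subseteq J$, and choosing $r\in G$ with $r\le p$ and $\trnklg(r)\ge n$ yields $\nugen\restriction(n\times\dom(p))=\trnk(r)\restriction(n\times\dom(p))\in\poss(p,n)$ by the last clause of $\le$ and (ii). For the fifth item, ``$\Leftarrow$'' is immediate since $p\wedge\eta\le p$ (second item) and $p\wedge\eta\forc\eta\subset\nugen$; for ``$\Rightarrow$'', in any generic $G\ni p$ with $\eta\subset\nugen[G]$ the set $\{p\wedge\eta':\eta'\in\poss(p,n)\}$, predense below $p$ by the third item, meets $G$ in some $p\wedge\eta'$, whence $\eta'=\nugen[G]\restriction(n\times\dom(p))=\eta$, so $p\wedge\eta\in G$ and $\varphi$ holds. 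The hard part is not any single step but organising the index-bookkeeping in the auxiliary facts (i)--(iii), in particular keeping track of which coordinates of $\eta$ are genuinely new versus already frozen in $\trnk(p)$; once that is done, all five items follow mechanically.
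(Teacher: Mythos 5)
Your proof is correct: all five items are indeed routine unwindings of the definitions, and your auxiliary facts (i)--(iii) about $\poss(\cdot,\cdot)$, proved from nonemptiness of $\val$ and the coherence clause \ref{def:creature}.(\ref{item2:wq}), are exactly the verifications the paper has in mind. The paper states these Facts without proof, so there is no alternative argument to compare against; your write-up fills in the intended standard details (trunk domain bookkeeping, survival of the $\liminf$/$\limsup$ norm requirements under dropping finitely many creatures, and the predensity/genericity argument for the last two items) correctly.
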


One simple way to guarantee that  $J=I^*$
is the following: Given $i\in I$ and a creature $\cc$, we can
strengthen $\cc$ by increasing the support by (not much more than) $\{i\}$
while not decreasing the norm too much: 
\begin{Lem}\label{lem:genericdomain}
  Assume that for all $i\in I^*$
  there is an $M\in\omega$ and a $u\in [I^*]^{{<}\al0}$ containing $i$ 
  sucht that for all $n>M$ and all $\cc\in\cK(n)$ with $\nor(\cc)>M$
  there is a $\cd\in\cS(\cc)$ such that
  \begin{itemize}
    \item $\nor(\cd)>\nor(\cc)-M$ and $\norls(\cd)>\norls(\cc)-M$,
    \item $\supp(\cd)=\supp(\cc)\cup u$ and $\suppls(\cd)=\suppls(\cc)\cup u$.
  \end{itemize}
  Then the domain of 
  $\nugen$ is forced to be $\omega\times I^*$.
\end{Lem}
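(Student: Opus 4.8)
The plan is to fix $i\in I^*$ and a condition $p\in Q_{\mathfrak p}$, and show that the set of conditions forcing $i\in J$ (equivalently, forcing $i\in\dom(\nugen)$) is dense below $p$; this suffices. If $i\in\dom(p)$ we are already done by the last ``Facts'' item, so assume $i\notin\dom(p)$. First I would apply the hypothesis to $i$ to obtain the bound $M$ and the finite set $u\ni i$. Since $\liminf_n\nor(p,n)=\infty$, pick $N\geq\trnklg(p)$ large enough that $N>M$ and $\nor(p,n)>M+1$ for all $n\geq N$; similarly, by $\limsup\norls(p,n)=\infty$ on the relevant index sets, arrange that the $\norls$ values also go to infinity along $n\geq N$ for all the indices of $u$. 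Now I would build $q$ by keeping $q(n)=p(n)$ for $n<N$, and for each $n\geq N$ replacing $p(n)$ by a successor $\cd_n\in\cS(p,n)$ as furnished by the hypothesis, so that $\supp(\cd_n)=\supp(p,n)\cup u$, $\suppls(\cd_n)=\suppls(p,n)\cup u$, $\nor(\cd_n)>\nor(p,n)-M$ and $\norls(\cd_n)>\norls(p,n)-M$. One also has to extend the trunk: $i$ (and the other elements of $u$) must be added to $\dom(q)$, and since the new support at stage $N$ contains $u$, the trunk $\trnk(q)$ must be defined on pairs $(m,j)$ for $j\in u\setminus\dom(p)$ and $m<N$; just pick any values in the relevant $\POSS^*_{=m,\{j\}}$, which are nonempty finite sets.

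The key points to verify are then: (a) $q$ is a legitimate $Q_{\mathfrak p}$-condition, and (b) $q\leq p$, and (c) $q$ forces $i\in\dom(\nugen)$. For (a): the support-monotonicity $\supp(q,n)\subseteq\supp(q,n+1)$ holds because we added the same fixed set $u$ to each $\supp(p,n)$ for $n\geq N$ (and nothing changed below $N$, but one should check the seam at $n=N-1\to N$ — here $\supp(p,N-1)\subseteq\supp(p,N)\subseteq\supp(p,N)\cup u=\supp(q,N)$, fine). The norm condition $\liminf_n\nor(q,n)=\infty$ follows since $\nor(q,n)>\nor(p,n)-M$ and $\nor(p,n)\to\infty$; the $\limsup$ condition on $\norls$ for each index $j\in\dom(q)$ follows similarly — for $j\in\dom(p)$ from the corresponding property of $p$ minus $M$, and for the new $j\in u$ from the fact that $j\in\suppls(\cd_n)$ for all $n\geq N$ together with $\norls(\cd_n)\to\infty$. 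For (b): $q(n)\in\cS(p,n)$ by construction for $n\geq N\geq\trnklg(q)$; the condition $\supp(q,n)\cap\dom(p)=\supp(p,n)$ holds because $u\cap\dom(p)$ might be nonempty in general, so here I would strengthen the choice and additionally require (as the hypothesis permits, by shrinking $u$ if necessary, or by the clause $\suppls(\cd)\cap\supp(\cc)\subseteq\suppls(\cc)$ in Definition~\ref{def:creature}) that the parts of $u$ already present behave correctly; more carefully, $\supp(q,n)=\supp(p,n)\cup u$, so $\supp(q,n)\cap\dom(p)=\supp(p,n)\cup(u\cap\dom(p))$, and one needs $u\cap\dom(p)\subseteq\supp(p,n)$ for all $n\geq N$ — this is the one genuinely delicate point.

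I expect the main obstacle to be precisely this bookkeeping about $u\cap\dom(p)$: the hypothesis gives a set $u$ that may meet $\dom(p)$ in indices $j$ with $\trnklg(p,j)$ large, so that $j\notin\supp(p,n)$ for the small $n\in[N,\trnklg(p,j))$. The fix is to choose $N$ not merely large for the norms but also large enough that $N\geq\trnklg(p,j)$ for every $j\in u\cap\dom(p)$ (a finite set, so this is possible); then for all $n\geq N$ we have $u\cap\dom(p)\subseteq\{j:\trnklg(p,j)\leq N\leq n\}\subseteq\supp(p,n)$, as required, and likewise the $\suppls$-inclusion $\suppls(q,n)\cap\dom(p)\subseteq\suppls(p,n)$ must be checked — it follows from $\suppls(\cd_n)=\suppls(p,n)\cup u$ and $u\cap\dom(p)\subseteq\supp(p,n)$ together with clause~\ref{def:creature}.(\ref{item1:wq}).(c), or more directly one simply also arranges via $\cS$ that $\suppls(\cd_n)\cap\dom(p)=\suppls(p,n)$. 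Finally, for (c): the new trunk values we chose on $\{(m,i):m<N\}$, together with the fact that $i\in\supp(q,n)$ for all $n\geq N$, mean $q$ forces $i\in\dom(p)\cup u\subseteq J$ — more precisely, $q$ forces $i\in\dom(q)\subseteq J$ by the last ``Facts'' item applied to $q$. Since $p$ and $i$ were arbitrary, $\dom(\nugen)=\omega\times I^*$ is forced.
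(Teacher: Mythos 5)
Your proposal is correct and is essentially the paper's argument: a density argument in which, for a fixed $i$ and $p$, one replaces $p(n)$ for all sufficiently large $n$ by the successor $\cd\in\cS(p(n))$ supplied by the hypothesis and fills in arbitrary trunk values for the new indices below that level. The paper's proof is a one-liner that leaves all the bookkeeping implicit; your additional care (choosing $N$ above $\trnklg(p,j)$ for $j\in u\cap\dom(p)$, and deriving the $\suppls$-inclusion from Definition~\ref{def:creature}.(\ref{item1:wq}).(c)) is exactly the right way to make it precise.
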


\begin{proof}
  Given $p\in Q_{\mathfrak p}$ and $i\in I^*$ we can
  find a $q\leq p$ such that $i\in \supp(q)$:
  For sufficiently large $n$, set $q(n)=\cd\in\cS(p(n))$ as
  above. 
\end{proof}

\subsection{Properness: Bigness and halving}

\begin{Def}
  \begin{itemize}
    \item
  For $\cc$ in $\cK(n)$ and $x>0$
  we write $\cd\in\cSp^x(\cc)$ if
  $\cd\in \cS(\cc)$,
  $\supp(\cd)=\supp(\cc)$,
  $\suppls(\cd)=\suppls(\cc)$,
  $\nor(\cd)\geq \nor(\cc)-x$ and
  $\norls(\cd)\geq \norls(\cc)-x$.
    \item
  The $n$-ml-creature $\cc$ is $(B,x)$-big, if for all 
  functions $G:\POSS_{n+1,\supp(\cc)}\to B$ 
  there is a $\cd\in \cSp^x(\cc)$ 
  and a $G':\POSS_{n,\supp(\cc)}\to B$ such that
  $G(\eta)=G'(\nu)$ for all $\eta\in \cd[\nu]$.
  I.e., modulo $\cd$ the value of 
  $G(\eta)$ only depends on $\eta\restriction n$.
%  $\mathbf f\in \val(\cd)$. I.e., 
%  $(\exists b_\eta)\, (\forall \mathbf f\in\val(\cd))\,
%  G(\mathbf f(\eta))=b_\eta$.
    \item
  $\cK(n)$ is $(B,x)$-big, if all $\cc\in\cK(n)$
  with norm bigger than 1 are $(B,x)$-big.
  (Note that we do not require that $\cc$ has large $\norls$.)%
\footnote{%
  Of course there are some other natural definitions for bigness.  We briefly
  mention two of them, however the reader can safely skip this.
  In our setting, all these notions are more or less equivalent: 
  Firstly, we will assume that $k\coloneqq|\POSS_{n,\supp(\cc)}|$
  is ``very small'' compared to the bigness $B$. Secondly,
  $\val(\cc)$ will be determined by the sequence $(\cc[\eta])$.
  \begin{itemize}
    \item
      The $n$-ml-creature $\cc$ is weakly-$(B,x)$-big, if
      for all  $\eta\in\POSS_{n,\supp(\cc)}$ and all
      $G:\cc[\eta]\to B$ there is a $\cd\in\cSp^x(\cc)$
      such that $G\restriction \cd[\eta]$ is constant.
    \item
      The $n$-ml-creature $\cc$ is $(B,x)$-big$^*$, if
      for all 
      $G:\val(\cc)\to B$ there is a $\cd\in\cSp^x(\cc)$
      such that $G$ restricted to $\val(\cd)$ is constant.
  \end{itemize}
  We obviously get: $(B,x)$-big implies weakly-$(B,x)$-big.
  \\
  Weakly-$(B,x/k)$-big implies
  $(B,x)$-big: We just iterate 
  bigness for all $\eta\in \POSS_{n,\supp(\cc)}$,
  i.e., at most $k$ times.
  \\
  $(B^k,x)$-big$^*$
  implies $(B,x)$-big: Apply big$^*$
  to the function that maps $\mathbf f\in\val(\cc)$
  to the sequence $(\mathbf f(\eta))_{\eta\in \POSS_{n,\supp(\cc)}}$.
}
  \end{itemize}
\end{Def}

\begin{Def}
  \begin{itemize}
    \item
      A condition $p$ decides a name $\n\tau$, if there is 
      an element $x\in V$ such that $p$ forces $\n\tau=\std x$.
    \item
      $\n\tau$ is $n$-decided by $p$, if $p\wedge \eta$ decides
      $\n\tau$ for each $\eta\in\val(p,n)$. 
    \item
      $p$ essentially decides
      $\n\tau$, if $\n\tau$ is $n$-decided by $p$ for some $n$.
    \item 
      Let $r:\omega\to\omega$ be a $Q_\mathfrak p$-name.
      $p$ reads $r$ continuously, if 
      $p$ essentially decides $r(n)$ for all $n$.
    \item 
      $p$ rapidly reads $r$ (above $M$), if 
      $r\restriction n$ is $n$-decided by $p$ for all $n$ (bigger than $M$).
  \end{itemize}
\end{Def}

Sufficient bigness gets us from continuous to rapid reading:

\begin{Lem}\label{lem:fromcontinuoustorapid}
  Fic $B:\omega\to\omega$. Assume that
  \begin{itemize}
    \item
      $\cK(n)$ is $(\prod_{m<n} B(m),1)$-big for all $m\in\omega$.
    \item $p$ continuously reads $r\in \prod B$.%
\footnote{%
  I.e., $r$ is a name, $p$ forces that $r(m)<B(m)$ for all $m\in\omega$, and
  $p$ continuously ready $r$.%
}
    \item $M\geq \trnklg(p)$, and $\nor(p,m)>1$ for 
      all $m\geq M$.
  \end{itemize}
  Then there is a $q\leq p$ such that
  \begin{itemize}
    \item  $\trnklg(q)=\trnklg(p)$, 
      $\trnk(q)=\trnk(p)$, and $q(n)=p(n)$ for $\trnklg(p)\leq n<M$,
    \item $q(n)\in \cSp^1(p(n))$ for $n\geq M$,
    \item $q$ rapidly reads $r$. I.e., $r\restriction n$ is $n$-decided by $q$ 
      for all $n>M$.
  \end{itemize}
\end{Lem}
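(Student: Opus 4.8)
\emph{The plan} is to build $q$ by a fusion whose single step uses bigness to ``pull a decision down by one level''. First, as $p$ reads $r$ continuously and deciding a finite tuple is the conjunction of deciding its entries, for each $n$ there is $g(n)$ with $r\restriction n$ being $g(n)$-decided by $p$; enlarging if necessary I may take $g$ strictly increasing with $g(n)>\max(n,M)$. The step I iterate: if $s\in Q_{\mathfrak p}$, $\ell\geq M$, $\nor(s,\ell)>1$, $m\leq\ell$, and $r\restriction m$ is $(\ell+1)$-decided by $s$, then there is $s'\leq s$ agreeing with $s$ off level $\ell$, with $s'(\ell)\in\cSp^1(s(\ell))$, which makes $r\restriction m$ be $\ell$-decided. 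Indeed, an $\eta\in\poss(s,\ell+1)$ is determined by its restriction to $\supp(s,\ell)$ (on the rest $\eta$ is fixed by $\trnk(s)$), and by assumption $s\wedge\eta$ forces some value $G(\eta)\in\prod_{k<m}B(k)\subseteq\prod_{k<\ell}B(k)$ for $r\restriction m$; extending $G$ arbitrarily to $\POSS_{\ell+1,\supp(s,\ell)}$ and applying $(\prod_{k<\ell}B(k),1)$-bigness of $\cK(\ell)$ to $s(\ell)$ yields $\cd\in\cSp^1(s(\ell))$ and $G'$ with $G(\eta)=G'(\nu)$ for $\eta\in\cd[\nu]$; let $s'$ be $s$ with $s(\ell)$ replaced by $\cd$. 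Then for $\nu\in\poss(s',\ell)$ the set $\{s'\wedge\eta : \eta\in\poss(s',\ell+1),\ \eta\supseteq\nu\}$ is predense below $s'\wedge\nu$, all its members force the single value $G'(\nu\restriction\supp(s,\ell))$ for $r\restriction m$, so $s'\wedge\nu$ forces it as well. The key observation is that the codomain $\prod_{k<\ell}B(k)$ supplied by the bigness hypothesis is exactly the right size to hold an arbitrary initial segment $r\restriction m$ with $m\leq\ell$.

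To make $r\restriction m$ be $m$-decided one applies this step at levels $g(m)-1,g(m)-2,\dots,m$ in turn (so that at each application the decision is already available one level higher), and the fusion must do this for all $m>M$ at once under two constraints: (a) whenever the step is applied at level $\ell$ to $r\restriction m$, the current condition already $(\ell+1)$-decides $r\restriction m$; and (b) each creature $q(\ell)$, $\ell\geq M$, is modified only within the single budget $\cSp^1$. These are reconciled using the nesting of initial segments: if $m\leq m'$ and a condition $(k)$-decides $r\restriction m'$, it also $(k)$-decides $r\restriction m$, so at each level it suffices to handle the longest initial segment still pending there, and pulling that one down a level does the same for all shorter ones. (For a fixed $N$ this already gives, modifying each creature at most once, a condition $n$-deciding $r\restriction n$ for all $M<n\leq N$: pull $r\restriction N$ from level $g(N)$ down to level $N$, then pull $r\restriction(N-1)$ down one level, then $r\restriction(N-2)$, and so on.) Carrying this out as an $\omega$-fusion and taking the coordinatewise limit $q$, one gets $q(n)=p(n)$ for $n<M$ and $q(n)\in\cSp^1(p(n))$ for $n\geq M$; hence $\liminf\nor(q,n)=\infty$, and since the step preserves $\supp$ and $\suppls$, the $\suppls$-requirement and $\supp(q,n)\cap\dom(p)=\supp(p,n)$ are inherited, so $q\in Q_{\mathfrak p}$ and $q\leq p$. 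As $r\restriction n$ is $n$-decided by every sufficiently advanced stage and this survives the later, strictly higher-level strengthenings, $r\restriction n$ is $n$-decided by $q$ for all $n>M$, i.e.\ $q$ rapidly reads $r$.

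\emph{The main obstacle} is the bookkeeping for this $\omega$-fusion. Handling $r\restriction m$ for $m=M+1,M+2,\dots$ naively one after another violates (b): the blocks $[m,g(m))$ of creatures one must touch overlap heavily for different $m$, so the pull-backs have to be interleaved so that each creature is touched (essentially) once while the step is never invoked on a name not yet decided one level up. Finding an order satisfying (a) and (b) simultaneously, and verifying that the limit of the resulting decreasing sequence is a genuine condition $\leq p$, is the real work; the bigness step, the predensity argument, and the closure and norm checks are routine.
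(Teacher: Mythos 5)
Your single step is sound and is exactly the mechanism the paper uses: given that $r\restriction m$ is $(\ell+1)$-decided, one application of $(\prod_{k<\ell}B(k),1)$-bigness to the creature at level $\ell$ pulls the decision down to level $\ell$ at cost $\cSp^1$. The fixed-$N$ construction in your parenthesis is also fine. The genuine gap is the part you explicitly defer as ``the real work'': there is no ordering of single steps that carries this out as an upward $\omega$-fusion within the one-touch budget, and the ``nesting of initial segments'' remark does not produce one. To make $r\restriction n$ become $n$-decided you must apply the step at level $n$, and for that you need $r\restriction n$ to be $(n+1)$-decided already --- which (since $p$ only decides it at level $g(n)$) requires the creatures at all levels of $[n+1,g(n))$ to have been replaced first; those replacements in turn depend on levels up to $g(n+1)$, $g(g(n))$, and so on. So the final creature at any fixed level $\ell$ depends on choices at unboundedly high levels and is never determined at a finite stage of an upward fusion; and you cannot revisit a level, because after one $\cSp^1$ step the norm may drop to just above $\nor(p,\ell)-1$, below the threshold needed to apply bigness again. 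Your alternative reading --- building $s_N$ from $p$ independently for each $N$ --- has no ``coordinatewise limit'' either, since the modifications at a given level differ for different $N$.

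The paper resolves exactly this point by compactness rather than by an ordering: for every $n\geq M$ it runs the full downward thinning from level $n$ to level $M$ (producing $\cd_{n,l}\in\cSp^1(p(l))$ and decision functions $\psi_{n,l}$, one bigness application per level per $n$), observes that at each level only finitely many pairs $(\val(\cd_{n,l}),\psi_{n,l})$ can occur, and applies K\"onig's Lemma to the resulting finitely branching tree to extract one coherent sequence $(\cd^*_l)_{l\geq M}$ defining $q$. Even then a final twist is needed to verify rapid reading: for a given $n$ one chooses $m$ with $h(m)>n$ (so that $r\restriction n$ is already decided by $p$ at level $m$) and a $k$ whose sequence agrees with the branch up to $m$, so that only the levels of $q$ below $m$ matter. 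None of this is routine bookkeeping, and it is absent from your proposal; as written, the proof does not go through.
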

\begin{proof}
  For $n\in\omega$, let $h(n)\geq 0$ be maximal
  such that $r\restriction h(n)$ is $n$-decided by $p$.
  So $h(n)$ is a weakly increasing, unbounded function.
  Set 
  \[
    x_{n,l}=r\restriction \min(h(n),l).
  \]
  Note that $x_{n,n}$ is $n$-determined by $p$,
  and that there are
  at most $\prod_{m<l} B(m)$ many possibilities for $x_{n,l}$.

  For all $n\geq M$, we define by downward induction for
  $l=n,n-1,\dots,M+1,M$ the creatures
  $\cd_{n,l}\in\cSp^1(p(l))$ and
  the function $\psi_{n,l}$ with domain $\poss(p,n)$: 
  \begin{itemize}
    \item
      $\cd_{n,n}=p(n)$,
      $\psi_{n,n}(\eta)$ is the value of $x_{n,n}$ as forced 
      by $p\wedge \eta$.
    \item
      For $l<n$ and $\eta\in\poss(p,l+1)$ we know by induction that
      $\psi_{n,l+1}(\eta)$ is a potential value for $x_{n,l+1}$.
      Let $\psi^-_{n,l+1}(\eta)$ be the corresponding value of 
      $x_{n,l}$. Using bigness, we get a
      $\cd_{n,l}\in\cSp^1(p(l))$ such that 
      $\psi^-_{n,l+1}(\eta)$ only depends on
      $\eta\restriction l\in \poss(p,l)$.
      We set  $\psi_{n,l}(\eta\restriction l)$ to be this value
      $\psi^-_{n,l+1}(\eta)$.
  \end{itemize}
  For every $n\in \omega$, set
  $y_n=(\val(\cd_{n,l}),\psi_{n,l})_{M\leq l \leq n}$.
  For all $l$ there are only finitely many values
  for $\val(\cd_{n,l})$  and for $\psi_{n,l}$.
  So the set of the sequences $y_n$ together with their
  initial sequences form a 
  finite splitting tree. Using K\"onig's Lemma, we get an infinite
  branch: A  sequence $(\cd_l^*,\psi_l^*)_{l\geq M}$
  such that $\cd_l^*\in\cSp^1(p(l))$ 
  and such that for all $n$ the sequence
  $y^*_n=(\val(\cd_l^*),\psi_l^*)_{M\leq l<n}$
  is initial sequence of $y_{m}$ for some $m>n$.

  We define $q\leq p$ by $q(l)=p(l)$ for $n<M$ and $q(l)=\cd^*_{l}$ otherwise
  (and, of course, $\trnk(q)=\trnk(p)$).

  Fix $n>M$.
  We claim that $r\restriction n$
  is $n$-decided  by $q$.

  Pick some $m$ such that $h(m)>n$
  and some $k$ such that $y^*_m$
  is initial sequence of $y_k$.
  Recall the inductive construction of $\cd_{k,l}$: 
  \begin{equation}
    \parbox{0.8\columnwidth}{Modulo $p$ and
      $\cd_{k,n},\cd_{k,n-1},\dots,\cd_{k,k}$ any $\eta\in\poss(p,n)$ 
     already decides $x_{k,n}$.}
  \end{equation}
  Also, $x_{k,n}$ contains $r\restriction n$ (since $h(k)>n$). In fact even
  $h(m)>n$, so $r\restriction n$ is decided by $p\wedge \nu$
  for all $\nu\in\poss(p,m)$. Therefore we can improve the previous equation:
  \begin{equation}
    \parbox{0.8\columnwidth}{Modulo $p$ and
      $\cd_{k,m-1},\dots,\cd_{k,k}$ any $\eta\in\poss(p,n)$ 
     already decides $x_{k,n}$.}
  \end{equation}
  Now recall that $\cd_{k,m-1},\dots,\cd_{k,k}$ are conditions in $q$,
  so $x_{k,n}$ (and therefore $r\restriction n$) is $n$-decided by $q$.
\end{proof}

To get properness, we need another well established creature forcing concept:
\begin{Def}
  The $n$-ml-creature $\cc$  is $x$-halving, if there 
  is a $\half(\cc)\in\cSp^x(\cc)$ satisfying the following:
  If $\cd\in\cS(\half(\cc))$ has non-zero norm,
  then there is a $\cd'$ (called
  the un-halved version of $\cd$) satisfying:
  \begin{itemize}
    \item $\cd'\in\cS(\cc)$,
    \item $\supp(\cd')=\supp(\cd)$, and $\suppls(\cd')=\suppls(\cd)$,
    \item $\nor(\cd')\geq \nor(\cc)-x$ and
      $\norls(\cd')\geq \norls(\cc)-x$,
    \item $\cd'[\eta]\subseteq \cd[\eta]$ for
      all $\eta\in \POSS_{n,\supp(\cd)}$.%
\footnote{%
  An alternative, stronger definition would be: 
  $\val(\cd')\subseteq \val(\cd)$.
  In the special case mentioned in Remark~\ref{rem:simplecase}
  these versions are equivalent.
}
  \end{itemize}
      $\cK(n)$ is $x$-halving, if all $\cc\in \cK(n)$
      with $\nor(\cc)>1$ are $x$-halving.
      (Note that we do not require $\norls(\cc)>1$.)
\end{Def}

\begin{Def}\label{def:sufficient}
  A forcing parameter $\mathfrak p$ has
  sufficient bigness and halving,
  if there is an increasing function $\maxposs:\omega\to\omega$ such that
  for all $n\in\omega$
  \begin{enumerate}
    \item $|\poss(p,n)|<\maxposs(n)$
      for all $p\in Q_\mathfrak p$.
    \item $ \cK(n)$ is $(2,1)$-big.
    \item $\cK(n)$ is $1/\maxposs(n)$-halving.
  \end{enumerate}
\end{Def}

\begin{Rem}\label{rem:sufficient}
  The natural way to  guarantee~(1) is  the following:
  There is an increasing
  function $\maxsupp:\omega\to\omega$ such that
  for every $n\in\omega$
  \begin{itemize}
    \item every $n$-ml-creature $\cc$ satisfies
      $|\supp(\cc)|<\maxsupp(n)$,
    \item There is an $M(n)\in\omega$ such that
      $|\POSS^*_{=m,\{i\}}|<M(n)$ for all $i\in I^*$ and $m<n$, and
    \item $\maxposs(n)\geq M(n)^{(n* \maxsupp(n-1))}$.
  \end{itemize}
  A bit of care will be required to construct such creatures, since
  on the other hand we will also need
  \begin{itemize}
    \item the norm of a creature does not decrease by, say, more than $1$ if we
      ``make the support twice as big'' (we need this 
        to prove $\al2$-cc, cf.\ Definition~\ref{def:localdelta}), and
    \item there is an $n$-ml-creature $c$ with $\nor(\cc)\geq n$ (this
     guarantees that $Q_\mathfrak p$ is nonempty).
  \end{itemize}
\end{Rem}

\begin{Lem}\label{lem:essentialdecide}
  Assume that $\mathfrak p$ has sufficient bigness and halving,
  that $\n\tau$ is the name for an element of $V$, 
  that $p_0\in Q_\mathfrak p$, that $M_0\geq \trnklg(p_0)$, $n_0\geq 1$ and
  $\nor(p_0,m)\geq n_0+2$ for all $m\geq M_0$. Then there is a 
  $q\leq p_0$ such that\footnote{note that as opposed to
  the previous lemma, the supports of $q(n)$ will generally be bigger than
  those of $p(n)$.}
  \begin{itemize}
    \item $q$ essentially decides $\n\tau$,
    \item $q(m)=p_0(m)$ for $\trnklg(p_0)\leq m<M_0$,
    \item $\nor (q,m)\geq n_0$ for all $m\geq M_0$.
  \end{itemize}
\end{Lem}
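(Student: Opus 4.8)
The plan is to build $q$ by a fusion argument, handling the creatures $q(m)$ for $m\ge M_0$ in increasing order while keeping a running lower bound on the norms that decreases in a controlled, summable way. The key combinatorial device is halving together with bigness: at stage $m$ we first replace the current creature by its halved version $\half(\cdot)$, which costs at most $1/\maxposs(m)$ in norm; this lets us "spend" norm freely later when we strengthen inside $\cS(\half(\cdot))$, because any creature of nonzero norm below the halved creature can be un-halved back up to norm $\ge \nor(p_0,m)-1/\maxposs(m)$. More precisely, I would fix an auxiliary summable sequence, say $\varepsilon_m = 2^{-(m-M_0)}$ (or simply use the $1/\maxposs(m)$'s, since $\maxposs$ is increasing and we may thin it out), arranged so that $\sum_{m\ge M_0}\varepsilon_m < 1$, and aim to keep $\nor(q,m) \ge \nor(p_0,m) - 1 - \sum_{M_0\le l\le m}\varepsilon_l \ge n_0$.

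The construction proceeds as follows. First I would pass to the halved creatures: let $p_1(m) = \half(p_0(m))$ for $m\ge M_0$, and $p_1(m)=p_0(m)$ below, noting $\nor(p_1,m)\ge \nor(p_0,m)-1/\maxposs(m) > n_0+1$. Now I enumerate $\poss(p_1, M_0)$ — there are fewer than $\maxposs(M_0)$ many $\eta$'s — and handle them one at a time; for each such $\eta$, the condition $p_1\wedge\eta$ has trunklength $M_0$, and I want to strengthen the creatures above $M_0$ (staying inside $\cSp^1$ of the halved creatures, hence inside $\cS(p_0(m))$ after un-halving) so as to decide $\n\tau$, or else preserve the option of trying again. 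The standard dichotomy at each coordinate: using $(2,1)$-bigness of $\cK(m)$ repeatedly (once per element of $\poss$, which is legitimate since $1/\maxposs(m)$ summed over the $<\maxposs(m)$ many applications still costs less than $1$, wait — rather, I iterate $(2,1)$-bigness, paying $1$ total, or better $(2, \varepsilon_m)$-bigness-type reasoning via the footnote comparing weak and full bigness), we can decide, coordinate by coordinate going upward, whether "there is a further strengthening forcing a value of $\n\tau$". If at some finite height $n$ the answer above $\eta$ becomes "yes" uniformly, we thin to that strengthening and $\n\tau$ is decided below $p_1\wedge\eta$ strengthened; the point of halving is that these strengthenings, which may have small norm at the finitely many coordinates we worked at, can be \emph{un-halved} to recover norm $\ge \nor(p_0,m)-1-\varepsilon_m$ while keeping $\cd'[\eta]\subseteq\cd[\eta]$, so the decision is preserved. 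If for \emph{some} $\eta$ no such strengthening ever appears, one argues (this is where the halving pays off again) that the halved-then-unhalved condition itself already forces $\n\tau$ to be undecided forever below it, contradicting that $\n\tau$ names an element of $V$ — so in fact every $\eta$ yields a decision at some finite height.

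Finally, collect: there is a single height $n_1\ge M_0$ (the max over the finitely many $\eta\in\poss(p_1,M_0)$ of the heights obtained) such that the amalgamated condition $q$, defined by gluing together the per-$\eta$ strengthenings at coordinates $<n_1$ and the un-halved versions of $p_1(m)$ for $m\ge n_1$, satisfies: $q(m)=p_0(m)$ for $m<M_0$; $\nor(q,m)\ge n_0$ for all $m\ge M_0$ (by the summability bookkeeping); and $\n\tau$ is $n_1$-decided by $q$, i.e.\ $q\wedge\eta$ decides $\n\tau$ for every $\eta\in\val(q,n_1)=\poss(q,n_1)$. This gives $q\le p_0$ essentially deciding $\n\tau$. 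The main obstacle is the norm bookkeeping: one must verify that iterating $(2,1)$-bigness over all $\eta\in\poss(p,m)$ at coordinate $m$, combined with the one-time halving cost and the subsequent un-halving, genuinely keeps the norm above $n_0$; this is exactly what condition~(3) of Definition~\ref{def:sufficient} ($1/\maxposs(n)$-halving) and condition~(1) ($|\poss(p,n)|<\maxposs(n)$) are calibrated to make work, and the footnote relating weak bigness to full bigness ("iterate at most $k$ times") is the identity that has to be invoked carefully, but the scaling is designed precisely so the total cost per coordinate stays bounded by $1$.
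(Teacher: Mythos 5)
There is a genuine gap, and it sits exactly at the point your proposal waves at: the claim that if, above some $\eta\in\poss(p_1,M_0)$, no strengthening essentially deciding $\n\tau$ ever appears, then ``the condition forces $\n\tau$ to be undecided forever, contradicting that $\n\tau$ names an element of $V$.'' This is not a contradiction. The set of conditions \emph{deciding} $\n\tau$ is dense, but those decisions are obtained by lengthening the trunk, which is useless for the fusion; \emph{essential} decision (deciding simultaneously for every possibility at some level, without lengthening the trunk and while keeping norms large) can perfectly well be unachievable above a given condition, and nothing about $\n\tau$ being a name rules this out. If your dichotomy really resolved itself for free, the halving machinery would be superfluous; in fact this ``bad'' case is precisely what halving exists to handle, and your proposal never supplies an argument for it.

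The paper's proof handles it as follows, and none of this structure is recoverable from a single up-front halving at $M_0$. In a single step at level $M$ one runs through all $\eta\in\poss(p,M)$ sequentially and, for each, either incorporates an essentially deciding strengthening (case \textbf{dec}) or, if none exists, \emph{halves} all creatures above $M$ (case \textbf{half}, cost $1/\maxposs(M)$ per $\eta$, hence at most $1$ in total). This step is then iterated at levels $M_k\to\infty$ with norm demands $k+n_0+2$, giving a fusion limit $q_0$ with $\nor(q_0,m)>n_0+1$ above $M_0$. Finally one uses $(2,1)$-bigness to homogenize, level by level, the statement ``$q_0\wedge\eta$ essentially decides $\n\tau$'' (the sets $\Lambda_{n,l}$), extracts a limit by K\"onig's Lemma, and shows every $\eta\in\poss(q_0,M_0)$ lands in the good set: otherwise one takes, along a bad branch, a condition $s$ with trunk length $M_k$ that \emph{decides} $\n\tau$, and \emph{un-halves} $s$ above its trunk; the un-halved $s'$ is an essentially deciding extension of the pre-halving condition with the required trunk length and norms, contradicting the fact that the \textbf{half}-case was taken at stage $k$. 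Your version lacks the repeated halving tied to each stage (needed so that un-halving contradicts the specific failed \textbf{dec} attempt), lacks the bigness/K\"onig thinning that converts ``each branch eventually decides'' into decision at one uniform level, and replaces the whole contradiction argument by an unjustified density claim; the norm bookkeeping you sketch is fine in spirit but does not repair this.
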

Then the usual standard argument gives us properness and
$\omega^\omega$-bounding, and Lemma~\ref{lem:fromcontinuoustorapid} gives us
rapid reading:
\begin{Cor}\label{cor:proper}
  Assume that $\mathfrak p$ has sufficient bigness and halfing.
  \begin{itemize}
    \item $Q_{\mathfrak p}$ is proper and $\omega^\omega$-bounding.
    \item If additionally every $\cK(n)$ is $(\prod_{m<n} B(m),1)$-big,
      we get rapid reading: If
      $r$ is a name for an element of $\prod B$ 
      then for every $p$ there is a $q\leq p$ such that
      $r\restriction m $ is $m$-decided by $q$ for all $m\in\omega$. 
  \end{itemize}
\end{Cor}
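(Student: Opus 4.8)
The plan is to read both items off the two preceding lemmas by the standard fusion argument: Lemma~\ref{lem:essentialdecide} yields properness and $\omega^\omega$-bounding, and Lemma~\ref{lem:fromcontinuoustorapid} upgrades the resulting continuous reading to rapid reading. For $\omega^\omega$-bounding, given a name $\n h$ for an element of $\omega^\omega$ and $p\in Q_\mathfrak p$, I would build a decreasing sequence $p=q_0\ge q_1\ge\cdots$ recursively: given $q_k$, put $n_k=k+1$, choose $M_k>M_{k-1}$ large enough (also $\ge\trnklg(p)$) that $\nor(q_k,m)\ge n_k+2$ for all $m\ge M_k$ (possible, since $q_k$ being a condition means $\liminf_m\nor(q_k,m)=\infty$), and apply Lemma~\ref{lem:essentialdecide} with $p_0=q_k$, $M_0=M_k$, $n_0=n_k$ to the name $\n h(k)$; this gives $q_{k+1}\le q_k$ with $q_{k+1}(m)=q_k(m)$ for $m<M_k$, $\nor(q_{k+1},m)\ge n_k$ for $m\ge M_k$, and $q_{k+1}$ essentially deciding $\n h(k)$. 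Since $M_k\to\infty$, every level is eventually frozen, and the pointwise limit $q$ has $\nor(q,m)\ge n_{k(m)-1}$ where $k(m)=\min\{k:M_k>m\}\to\infty$; so $\liminf_m\nor(q,m)=\infty$, hence $q\in Q_\mathfrak p$, $q\le p$, and $q$ essentially decides each $\n h(k)$. Letting $g(k)$ be the largest of the finitely many values of $\n h(k)$ forced by the conditions $q\wedge\eta$ with $\eta\in\poss(q,n)$ (for the $n$ such that $\n h(k)$ is $n$-decided by $q$), the generic branch goes through one such $\eta$, so $q\forc\n h\le\std g$; thus $Q_\mathfrak p$ is $\omega^\omega$-bounding.

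For properness, fix a large regular $\chi$ and a countable $N\prec H(\chi)$ with $\mathfrak p,Q_\mathfrak p,p\in N$, together with a (possibly external) enumeration $(\n\alpha_k)_{k\in\omega}$ of all $Q_\mathfrak p$-names in $N$ for ordinals. Run the same fusion with $\n\alpha_k$ in place of $\n h(k)$, but choosing $q_{k+1}$ and $M_k$ by a fixed definable recipe, so that $q_k\in N$ for all $k$ by elementarity. For each $k$, the essential decision of $\n\alpha_k$ by $q_{k+1}$ produces a finite set $F_k$ of possible values with $q_{k+1}\forc\n\alpha_k\in F_k$; since $F_k\in N$ is finite, $F_k\subseteq N$, so the limit $q$ forces $\n\alpha_k\in N$ for every $k$. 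Thus $q\le p$ is an $(N,Q_\mathfrak p)$-generic condition, and as $N$ and $p$ were arbitrary, $Q_\mathfrak p$ is proper.

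For rapid reading, assume in addition that every $\cK(n)$ is $(\prod_{m<n}B(m),1)$-big, let $r$ be a name for an element of $\prod B$, and fix $p$. Applying the $\omega^\omega$-bounding fusion to the names $r(m)$ yields $q_1\le p$ reading $r$ continuously; fix $M\ge\trnklg(q_1)$ with $\nor(q_1,m)>1$ for all $m\ge M$. Lemma~\ref{lem:fromcontinuoustorapid} now applies verbatim with this $B$ and produces $q_2\le q_1$ such that $r\restriction m$ is $m$-decided by $q_2$ for all $m>M$. Finally pick $\eta_0\in\poss(q_2,M+1)$ — for this $\eta_0$, $q_2\wedge\eta_0$ decides $r\restriction(M+1)$ — and put $q=q_2\wedge\eta_0$; lengthening the trunk does not alter the creatures, so for $m\le M+1$ the set $\poss(q,m)$ is a singleton and $q$ decides $r\restriction m$, while for $m>M+1$ we have $q\wedge\nu=q_2\wedge\nu$ for every $\nu\in\poss(q,m)\subseteq\poss(q_2,m)$, so $r\restriction m$ remains $m$-decided by $q$. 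Hence $q\le p$ rapidly reads $r$ for all $m$, as required.

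There is no real obstacle beyond bookkeeping; the one point needing care is the choice of the schedules $(M_k)$, $(n_k)$ in the fusion, which must freeze the levels one after another while letting the norm floors $n_k$ diverge, since this is precisely what guarantees $\liminf_m\nor(q,m)=\infty$ and hence that the pointwise limit is a genuine $Q_\mathfrak p$-condition. (The $\suppls$/$\norls$-clause in the definition of a condition is automatic in the pure lim-inf setting used in this paper; in the general setting one additionally reads Lemma~\ref{lem:essentialdecide} as keeping $\norls$ under control.)
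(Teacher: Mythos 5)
Your argument is essentially the paper's: a fusion along Lemma~\ref{lem:essentialdecide} (freezing levels below an increasing schedule $M_k$ while keeping the norm floors $n_k$ divergent above) gives $\omega^\omega$-bounding, the same fusion run inside $N\prec H(\chi)$ on an enumeration of the ordinal names in $N$ gives properness, and Lemma~\ref{lem:fromcontinuoustorapid} upgrades the resulting continuous reading to rapid reading; your extra trunk-lengthening step to cover $m\le M$ is a harmless refinement of what the paper leaves implicit.

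The one point where your write-up falls short of the stated generality is the lim-sup clause in the definition of a condition. The corollary is asserted for an arbitrary $\mathfrak p$ with sufficient bigness and halving, so the limit $q$ of your fusion must also satisfy that for every $i\in\dom(q)$ the set $\{\norls(q,n):\, i\in\suppls(q,n)\}$ is unbounded; your main construction only verifies $\liminf_n\nor(q,n)=\infty$, and your closing parenthesis proposes to repair this by ``reading Lemma~\ref{lem:essentialdecide} as keeping $\norls$ under control'', which is not how it works (the lemma makes no promise about $\norls$ above $M_0$, and the order relation allows $\suppls$ and $\norls$ to deteriorate there). The paper's fix is a bookkeeping clause \emph{inside} the fusion: at stage $k$ one first picks (by bookkeeping) some $i\in\dom(q_k)$ and then chooses the freezing point $M_k$ large enough that some level $m<M_k$ already satisfies $i\in\suppls(q_k,m)$ and $\norls(q_k,m)>k$; since everything below $M_k$ is frozen from then on, these witness levels survive to the limit and secure the lim-sup clause. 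Such witnesses exist because each $q_k$ is itself a condition. With that clause added, your proof coincides with the paper's; in the pure lim-inf application of this paper the clause is indeed vacuous, as you note.
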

Let us first give a sketch of the (standard) argument of the Corollary:
\begin{proof}
  \begin{itemize}
    \item $\omega^\omega$-bounding: Assume that $f$ is a name
      for a function from $\omega$ to $\omega$ and that $p_0$ is in
      $Q_\mathfrak p$. Using the previous lemma, we iteratively
      construct $p_{n+1}\leq p_n$ and $h_n$ such that 
      \begin{itemize}
        \item  $p_{n+1}$ essentially decides  $f(n)$,
        \item  $p_{n+1}(m)=p_n(m)$ for all $m<h_n$,
        \item  for some $i\in\dom(p,n)$ (picked by suitable bookkeeping)
           there is an $m<h$ such that $i\in\suppls(p_n,m)$ and
           $\norls(p_n,m)>n$,
        \item  $\nor(p_{n+1},m)>n$ for all $m\geq h_n$.
      \end{itemize}
      This guarantees that the sequence of the $p_n$'s has a limit $q$,
      which essentially decides all $f(n)$. This in turn implies that
      (modulo $q$) there are only finitely many possibilities for each 
      $f(n)$, which gives us $\omega^\omega$-bounding.
   \item Properness: Fix $N\esm H(\chi)$ and $p_0\in N$.
      We need a $q\leq p$ which is $N$-generic, i.e., which forces
      that $\n\tau[G]\in N$ for all names for ordinals that are in $N$.
      Enumerate all these names as $\{\n\tau_0,\n\tau_1\dots\}$.
      Now do the same as above, but instead of $f(n)$
      use $\tau_n$; and construct each $p_n$ inside of $N$. (The whole
      sequence of the $p_n$'s cannot be in $N$, of course.)
      Then $q$ leaves only finitely many possibilities for each $\n\tau_n$,
      each possibility being element of $N$, which gives properness.
  \end{itemize}
\end{proof}

\begin{proof}[Proof of Lemma~\ref{lem:essentialdecide}]
  {\bf (a) Halving, the single step $S^{\rm e}(p,M,n)$:}

  Assume that
  \begin{itemize}
    \item $p\in P$, 
    \item $M\geq \trnklg(p)$,
    \item $n\geq 1$, $\nor(p,m)>n$ for all $m\geq M$.
  \end{itemize}
  We now define  $S^{\rm e}(p,M,n)\leq p$.
  Enumerate $\poss(p,M)$ as $\eta^1,\dots,\eta^{l}$.
  So $l \leq \maxposs(M)$.
  Set $p^{0}=p$. For $1\leq k\leq l$, pick $p^k$ such that
  \begin{itemize}
    \item $\trnklg(p^k)=M$ and  $p^k\leq p^{k-1}\wedge \eta_k$.
      (So in particular, $\trnk(p^k)\restriction \dom(p)=\eta_k$.)
    \item For all $m\geq M$, $\nor(p^k,m)>n-k/\maxposs(M)$.
    \item One of the following cases holds:
    \begin{description}
      \item[dec]  $p^{k}$ essentially decides $\n\tau$, or
      \item[half] it is not possible to satisfy case {\em dec}, then
         $p^{k}(m)=\half(p^{k-1}(m))$ for all $m>M$.
    \end{description}
  \end{itemize}
  So in  case {\bf half}, we get $\dom(p^k)=\dom(p^{k-1})$, but in
  case {\bf dec} the domain will generally increase.

  We now define $q=S^{\rm e}(p,M,n)$ by $q(m)=p(m)$ for $m<M$ and
  $q(m)=p^{l}(m)$ otherwise.\footnote{And, of course, we set
    $\trnk(q,i)=\trnk(p,i)$ if $i\in\dom(p)$ and $\trnk(q,i)=\trnk(p^l,i)$
    otherwise.}
  Note that $\nor(q,m)>n-1$ for all $m\geq M$.

  {\bf (b) Iterating the single step:}

  Given $p_0$, $M_0$ and $n_0$ as in the Lemma, we 
  inducitvely construct $p_k$ and  $M_k$ for $k\geq 1$:
  \begin{itemize}
    \item Choose by some bookkeeping an $\alpha\in\dom(p_{k-1})$.
    \item Choose 
\begin{equation}\label{eq:e5iojh23}
  M_k>k+M_0
\end{equation}
      big enough such that
      \begin{itemize}
        \item there is an $l<M_k$
          with $\alpha\in\suppls(p_{k-1},l)$ and $\norls(p_{k-1},l)>k$,
        \item $\nor(p_{k-1}(m))>k+n_0+2$ for all $m>M_k$.
      \end{itemize}
    \item Let $p_k$ be $S^{\rm e}(p_{k-1},M_k,k+n_0+2)$.
  \end{itemize}
  Assuming adequate bookkeeping, the sequence $p_k$ has a limit 
  $q_0\leq p_0$, and
  $\nor(q_0,m)>n_0+1$ for all $m\geq M_0$.
 
  {\bf (c) Bigness, thinning out $q_0$}

  We now thin out $q_0$, using bigness 
  in a way similar to the proof
  of Lemma~\ref{lem:fromcontinuoustorapid}.

  For all $n\in \omega$, we define by downward induction for
  $l=n,n-1,\dots,M_0+1,M_0$, a subset
  $\Lambda_{n,l}$ of $\poss(q_0,l)$ and ml-creatures 
  $\cd_{n,l}\in\cSp^1(q_0(l))$:
  \begin{itemize}
    \item $\cd_{n,n}=q_0(l)$; and 
      $\eta\in\Lambda_{n,n}$ iff $q_0\wedge \eta$ essentially decides $\n\tau$. 
    \item For $l<n$, we use bigness to get
      $\cd_{n,l}\in\cSp^1(q_0(l))$ such that 
      for all $\eta\in\poss(q_0,l)$ either
      $\cd_{n,l}[\eta]\subseteq \Lambda_{n,l+1}$
      or $\cd_{n,l}[\eta]\cap \Lambda_{n,l+1}=0$.
      We set
      $\Lambda_{n,l}$ to be the set of those 
      $\eta\in\poss(q_0,l)$ such that 
      $\cd_{n,l}[\eta]\subseteq \Lambda_{n,l+1}$.
  \end{itemize}
  So by this construction we get:
  If $\eta\in \poss(q_0,M_0)\cap \Lambda_{n,M_0}$
  then every $\nu\in\poss(q_0,n)$ that extends $\eta$ and is compatible with
  $(\cd_{n,l})_{M_0\leq l<n}$ satisfies $q_0\wedge \nu$ essentially 
  decides $\n\tau$.

  If on the other hand
  \begin{itemize}
    \item $\eta\in \poss(q_0,M_0)\setminus \Lambda_{n,M_0}$,
    \item $\nu$ is in $\poss(q_0,M)$ for some $M_0\leq M\leq n$,
    \item $\nu$ extends $\eta$, and
    \item $\nu$ is compatible with $(\cd_{n,l})_{M_0\leq l<M}$, then
  \end{itemize}
  \begin{equation}\label{eq:gugki}
    q_0\wedge \nu\text{ does not essentially decide }\n\tau.
  \end{equation}

  We claim that there is some $n_0\geq M_0$ such  that
  \begin{equation}\label{eq:ojho235}
    \poss(q_0,M_0)\subseteq \Lambda_{n_0,M_0}.
  \end{equation}
  Then we define $q\leq q_0$ by $q(m)=\cd_{n_0,m}$
  for $M_0\leq m\leq n_0$ and $q(m)=q_0(m)$ for $m>n_0$.
  According to the definition of
  $\Lambda_{n_0,M_0}$, we know that $q_0\wedge \nu$ essentially 
  decides $\n\tau$ for all $\nu\in\poss(q,n_0)$,
  so $q$ essentially decides $\n\tau$. This finishes
  the proof of the Lemma, since 
  $q$ satisfies the other requirements as well.
  
  So it remains to show~\eqref{eq:ojho235}.
  For every $n\in \omega$, we define the finite sequence 
  \[
    x_n=(\val(\cd_{n,l}),\Lambda_{n,l})_{M_0\leq l \leq n}.
  \]
  For each $l$, there are only finitely many possibilities
  for $\val(\cd_{n,l})$ and for $\Lambda_{n,l}$,
  so the set of the sequences $x_n$ together with their
  initial sequences form a 
  finite splitting tree. Using K\"onig's Lemma, we get an
  infinite branch. So we get a sequence
  $(\cd^*_l,\Lambda^*_l)_{M_0\leq l \leq \omega}$ such that 
  $\cd^*_l\in \cSp^1(q_0(l))$ and
  for all $n$ there is an $m>n$ such that the sequence
  \[
    x^*_n=(\val(\cd^*_{l}),\Lambda^*_{l})_{M_0\leq l \leq n}
  \]
  is an inital sequence of $x_m$.

  We claim
  \begin{equation}\label{eq:huiwt}
    \poss(q_0,M_0)\subseteq \Lambda^*_{M_0}.
  \end{equation}
  Then we get~\eqref{eq:ojho235} by picking any $n_0$ such that 
  $\Lambda_{n_0,M_0}=\Lambda^*_{M_0}$.

  To show~\eqref{eq:huiwt},
  assume towards a contradiction that there is some 
  $\eta_0\in\poss(q_0,M_0)\setminus \Lambda^*_{M_0}$.
  Define $q_1\leq q_0$ by 
  $q_1(l)=q_0(l)$ if $l<M_0$ and 
  $q_1(l)=\cd^*_{l}$ otherwise.
  Find an $s\leq q_1\wedge \eta_0$ deciding $\n\tau$.
  Without loss of generality, $\trnklg(s)=M_k>M_0$ 
  for some $k$, where $M_k$ was chosen in \eqref{eq:e5iojh23}.  
  Also we can assume $\nor(s,m)>2$ for all $m>\trnklg(s)$.
  Let $\trnk(s)$ extend some 
  $\nu\in\poss(q_1,M_k)\subseteq \poss(q_0,M_k)$.
  In particular, $\nu$ extends $\eta_0$.
  We claim:
  \begin{equation}\label{eq:o3i5}
    \text{$q_0\wedge \nu$ does not essentially decide $\n\tau$}
  \end{equation}
  Pick $m$ such that $x_m$ extends $x^*_{M_k}$.
  In particular, $\Lambda_{m,M_0}=\Lambda^*_{M_0}$, so 
  $\eta_0\notin \Lambda_{m,M_0}$.
  Since $\nu\in \poss(q_1,M_k)$,
  $\nu$ is compatible with the sequence $\val(\cd^*_l)_{M_0\leq l<M_k}$
  and $\val(\cd^*_l)=\val(\cd_{m,l})$. 
%  Also, $\nu$ extends $\eta_0$.
%  Extend $\nu$ to some $\nu'\in\poss(q_0,m)$ compatible
%  with $\val(\cd_{m,l})_{M_0\leq l<m}$. So 
%  according to the definition of $\cd_{m,l}$ and $\Lambda_{m,l}$,
%  we know that $\eta_0\notin \Lambda_{m,M_0}$ 
%  implies $\nu'\notin \Lambda_{M_k,M_k}$, i.e., that 
%  $q_0\wedge \nu'$ does not essentially decide $\n\tau$,
%  which implies that $q_0\wedge \nu$ does not essentially decide $\n\tau$
%  either. 
  So by~\eqref{eq:gugki} we get that
  $q_0\wedge \nu$ does not essentially decide $\n\tau$.
  This proves~\eqref{eq:o3i5}.

  By~\eqref{eq:o3i5} we know: when we were dealing with $\nu$ in stage $k$, 
  we were in the {\bf half}-case. In particular,
  $s$ is stronger than some $p_{k-1}^l$ that resulted
  from halving $p_{k-1}^{l-1}$.
  Let $M'$ be such that $\nor(s,m)>k+n_0+2$ 
  for all $m\geq M'$. We can now un-halve 
  $s(m)$ for all $h_k\leq m < M'$ (and leave
  it unchanged above $M'$), resulting in 
  a condition $s'$ that is stronger than $p_{k-1}^{l-1}$ and
  essentially decides $\n\tau$,
  a contradiction to the fact that $p_{k-1}^l$ was
  constructed using the {\bf half}-case.
  So we have shown~\eqref{eq:huiwt}.
\end{proof}

\begin{Rem}
  The proof actually shows that it is not required that all
  $n$-ml-creatures are $1/\maxposs(n)$-halving. It is enough to
  have an infinite set $w\subseteq \omega$ such that 
  for all $M\in w$ and $n\geq M$ every $n$-ml-creature is 
  $1/\maxposs(M)$-halving. (Just choose all the $M_k$ in the proof
  to be in $w$.)
\end{Rem}

\subsection{$\al2$-cc}

To preserve all cofinalities, we will use $\al2$-cc in addition to properness.
To guarantee that $Q_{\mathfrak p}$ is $\al2$-cc, we need additional properties
of $\mathfrak p$ and we have to assume CH in the ground model.

We will argue as follows: Assume towards a contradiction that $A$ is an
antichain of size $\al2$.  By a standard $\Delta$-system argument we can assume
that any two conditions in $A$ have (more or less) disjoint domain; we assume
that there are only continuum many different conditions ``modulo isomorphism of
the domain''; and then we have to argue that two identical (modulo domain)
conditions with disjoint domain are compatible.

There are many ways to achive this, one sufficient conditions is the
following:
\begin{Def}\label{def:localdelta}
  Fix $n\in\omega$.
  The $n$-crature-parameter $\mathfrak p(n)$ has the local
  $\Delta$-property, if we can assign one of continuum
  many\footnote{In practise, we can get finitely many.} ``local types''
  to each pair $(\cc,\bar i)$,
  where $\cc$ is an $n$-ml-creatue and
  $\bar i: |\supp(\cc)|\to \supp(\cc)$ is bijective,
  such that the following holds:
  \\
  {\bf If}
  \begin{itemize}
    \item $(\cc_1,\bar i_1)$ and $(\cc_2,\bar i_2)$ are as above
      and have the same local type,
    \item $\nor(\cc_1)=\nor(\cc_2)>1$ and $\norls(\cc_1)=\norls(\cc_2)$,
    \item the enumerations $\bar i_1$ and $\bar i_2$ agree
      on $\supp(\cc_1)\cap \supp(\cc_2)$.
      \\
      More formally: if $i\in \supp(\cc_1)\cap \supp(\cc_2)$, then
      there is an $m$ such that $\bar i_1(m)=\bar i_2(m)=i$,
  \end{itemize}
  {\bf then} there is a $\cd\in \cS(\cc_1)\cap \cS(\cc_2)$ such  that
  \begin{itemize}
    \item  $\supp(\cd)=\supp(\cc_1)\cup \supp(\cc_2)$ and
      $\suppls(\cd)=\suppls(\cc_1)\cup \suppls(\cc_2)$,
    \item $\nor(\cd)\geq \nor(\cc_1)-1$ and
      $\norls(\cd)\geq \norls(\cc_1)-1$.
  \end{itemize}
\end{Def}

\begin{Lem}\label{lem:al2cc}
  Assume CH and that $\mathfrak p(n)$ has the local
  $\Delta$-property for all $n$.
  Then $Q_\mathfrak p$ is $\al2$-cc.
\end{Lem}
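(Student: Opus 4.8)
The plan is to run the standard $\Delta$-system argument for $\aleph_2$-cc, using the local $\Delta$-property to handle the ``overlap'' creatures. Suppose towards a contradiction that $A=\{p_\xi:\xi\in\omega_2\}$ is an antichain of size $\aleph_2$. First I would normalize the conditions: each $p_\xi$ is determined (in the ground model, under CH) by $\trnklg(p_\xi)$, the trunk $\trnk(p_\xi)$, the countable set $\dom(p_\xi)$, and the sequence $(p_\xi(n))_{n\geq\trnklg(p_\xi)}$ of ml-creatures together with their norms and $\norls$-values. Since $|\supp(p_\xi(n))|<\maxsupp(n)$ (a finite bound), each $\dom(p_\xi)=\bigcup_n\supp(p_\xi,n)$ is countable, so $(\dom(p_\xi))_{\xi\in\omega_2}$ is a family of countable subsets of $I^*$; by the $\Delta$-system lemma for countable sets (valid in ZFC for an $\aleph_2$-sized family, using $\aleph_1^{\aleph_0}=\aleph_1$ which follows from CH) we may thin $A$ to an $\aleph_2$-sized subfamily with a common root $R$, i.e.\ $\dom(p_\xi)\cap\dom(p_{\xi'})=R$ for $\xi\neq\xi'$.

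Next I would use CH to reduce to conditions that are ``identical modulo the domain.'' Fix for each $\xi$ a bijective enumeration $\bar e_\xi$ of $\dom(p_\xi)$ that enumerates the root $R$ first, in a fixed order, and is chosen compatibly so that the induced enumerations $\bar i_\xi(n)$ of $\supp(p_\xi,n)$ agree with each other on $R$. The isomorphism type of $p_\xi$ is then: $\trnklg(p_\xi)$; the trunk read off via $\bar e_\xi$; for each $n$, the local type assigned by the local $\Delta$-property to $(p_\xi(n),\bar i_\xi(n))$, together with the exact values $\nor(p_\xi,n)$ and $\norls(p_\xi,n)$; plus the combinatorial data of which coordinates of $\supp(p_\xi,n)$ lie in $R$. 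Because there are only countably many $n$-ml-creature parameters' worth of local types (at most continuum many by the definition, and norms range over a fixed countable set), there are at most $\cc^{\aleph_0}=\cc=\aleph_1$ such isomorphism types under CH; so we can thin $A$ once more to an $\aleph_2$-sized subfamily all of whose members have the same isomorphism type. Pick two of them, $p=p_{\xi_1}$ and $q=p_{\xi_2}$, with $\dom(p)\cap\dom(q)=R$, same trunk on $R$ (they literally agree as functions on the root part of the trunk, since the trunk data and the root-enumeration agree), and $\trnklg(p)=\trnklg(q)=:h$.

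Now I would build a common lower bound $r$ coordinatewise. For each $n\geq h$, the creatures $p(n)$ and $q(n)$ have the same local type, equal norms $\nor(p,n)=\nor(q,n)>1$ for all large $n$ (and this holds for all $n\geq h$ after a harmless strengthening, or we just use that $\liminf\nor\to\infty$ so we may assume it above some $h'$) and equal $\norls$, and their enumerations agree on $\supp(p,n)\cap\supp(q,n)$ — this intersection is contained in $R$, and on $R$ the enumerations were arranged to agree. Hence the local $\Delta$-property yields $\cd_n\in\cS(p(n))\cap\cS(q(n))$ with $\supp(\cd_n)=\supp(p,n)\cup\supp(q,n)$, $\suppls(\cd_n)=\suppls(p,n)\cup\suppls(q,n)$, $\nor(\cd_n)\geq\nor(p,n)-1$ and $\norls(\cd_n)\geq\norls(p,n)-1$. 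Setting $r(n)=\cd_n$ for $n\geq h$, $\trnk(r)=\trnk(p)\cup\trnk(q)$ (well-defined since the two trunks agree on the common part over $R$), $\trnklg(r)=h$, I would check that $r$ is a genuine $Q_{\mathfrak p}$-condition: $\supp(r,n)\subseteq\supp(r,n+1)$ inherits from $p,q$; $\liminf\nor(r,n)=\infty$ since each is at least $\min(\nor(p,n),\nor(q,n))-1$; and the $\limsup\norls$ requirement at each $i\in\dom(r)$ follows because $i$ lies in $\dom(p)$ or $\dom(q)$ and the corresponding $\norls$-values drop by at most $1$. Finally $r\leq p$ and $r\leq q$: $r(n)=\cd_n\in\cS(p(n))$, $\supp(r,n)\cap\dom(p)=\supp(p,n)$ (the new coordinates come from $\dom(q)\setminus R$, disjoint from $\dom(p)$), $\suppls(r,n)\cap\dom(p)\subseteq\suppls(p,n)$, and the trunk extends $\trnk(p)$; symmetrically for $q$. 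So $p$ and $q$ are compatible, contradicting that $A$ is an antichain.

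The main obstacle — and the one point to be careful about — is the bookkeeping in the second paragraph: arranging the enumerations $\bar e_\xi$ so that (i) they induce a single fixed enumeration of the root $R$, hence the enumerations of the overlap $\supp(p_\xi,n)\cap\supp(p_{\xi'},n)\subseteq R$ automatically agree, and (ii) the resulting ``isomorphism type'' is coarse enough that only $\aleph_1$ of them occur (needing CH, and the fact that the creature parameters involve only countably-many-valued norms and at most continuum-many local types), yet fine enough that two conditions of the same type genuinely satisfy all the hypotheses of Definition~\ref{def:localdelta} at every level $n\geq h$. Once the enumerations are set up correctly, the verification that $r$ is a condition and a common extension is routine from the definitions of $Q_{\mathfrak p}$ and of $\leq$, using only that the ``new'' coordinates of each $\cd_n$ are disjoint from the other condition's domain.
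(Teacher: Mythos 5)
Your proof follows essentially the same route as the paper: a $\Delta$-system argument plus a CH-counting of ``isomorphism types'' (trunk, norms, local types, positions of root elements), and then the local $\Delta$-property applied level by level to merge two same-type conditions with disjoint non-root domains into a common extension; your ``harmless strengthening'' to get $\nor>1$ above the trunk even tidies a point the paper leaves implicit. The only cosmetic caveat is that ``enumerate the root first'' is literally impossible when $R$ is infinite but properly contained in the domain; however, your accompanying requirement that the induced enumerations agree on $R$ (equivalently, fixing the enumeration positions of the root elements as part of the type, which is what the paper records) is the condition actually used, so the argument stands.
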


\begin{proof}
Assume towards a contradiction that $A$ is an antichain of size $\al2$. 
We can assume that there is a $\Delta\subseteq I^*$ such that
$\dom(p)\cap \dom(q)=\Delta$ for all $p\neq q$ in $A$,
and that $|\supp(p)|=M\leq \omega$ for all $p\in A$.
Pick for all $p\in A$ a bijection
$\bar i^p: M\to \dom(p)$.

%an $l_i$ such that for all $q\in A$ $\bar i^p(l_i)=i$.
%$(i^p_n)_{n<\omega}$  of $\dom(p)$ such that for $i^p_n=i^q_n$
%for all $n\in\delta$ and $p,q\in A$.
We can also assume that the following objects and statements
do not depend on the choice of $p\in A$ for
$i^\Delta\in \Delta,m<M$ and $n\in\omega$:
\begin{itemize}
  \item The trunk of $p$ ``modulo the enumeration of the domain'', i.e.,
    $\trnklg(p)$, $\trnklg(p,\bar i^p(m))$ and $\trnk(p,\bar i^p(m))$.
  \item The norms, $\nor(p,n)$, $\norls(p,n)$.
  \item 
    The local type of $(p(n),\bar j^p_n)$, where $\bar j^p_n$ is
    $\bar i^p$ restricted to $\supp(p,n)$.%
\footnote{%
  More formally,
  $\bar j^p_n: |\supp(p,n)|\to \supp(p,n)$
  is defined by $\bar j^p_n(l)=\bar i^p(k)$ for
  the minimal $k$ such that $\bar i^p(k)\in \supp(p,n)\setminus
  \bar {j^p_n}'' l$.%
}
  \item Whether $\bar i^p(m)\in \supp(p,n)$.
  \item Whether $\bar i^p(m)=i^\Delta$.
\end{itemize}

Now pick  $p\neq q$ in $A$. We show towards a contradiction that $p$ and $q$ are
compatible: Pick $h$ such that $\nor(p,n)>1$ for all $n\geq
h$. The local types of $(p(n),\bar j^p_n)$ and $(q(n),\bar j^q_n)$
are the same. If $i^\Delta\in \supp(p,n)\cap \supp(q,n)$, then
$i^\Delta=\bar i^p(m)=\bar i^q(m)$ for some $m<M$, 
and $\bar i^p(k)\in \supp(p,n)$ iff $\bar i^q(k)\in \supp(q,n)$ for 
all $k\leq m$,
therefore $i^\Delta=\bar j^p_n(l)=\bar j^q_n(l)$ for some $l$.
So we can apply the local $\Delta$ property and 
get $\cd\in\cS(p(n))\cap \cS(q(n))$.
The sequence of these creatures, together with the union of the stems of $p$
and $q$, from a condition $r\leq p,q$.
\end{proof}

\section{Continuum many invariants}
We now apply this creature forcing construction (actually, only the pure
lim-sup case and the simplified setting described in
Remark~\ref{rem:simplecase}) to improve the result of {\em Decisive
Creatures}~\cite{MR2499421}.  We have to make sure to define the ml-creatures
and the norms in a way to satisfy sufficient bigness and halfing (see
Definition~\ref{def:sufficient} and the Remark following it).  Once we have
done this, it turns out that the rest of the proof of the Main Theorem is a
rather straightforward modification of the proof in~\cite{MR2499421}.

\subsection{Atomic creatures, decisiveness}

We will build the ml-creatures from simpler creatures, which we call
atomic creatures.
An atomic parameter is a tuple $a=(A,\cK,\val,\nor,\cS)$ such that
\begin{itemize}
  \item $A$ is a finite set.
  \item $\cK$ is a finite set (the set of $a$-atomic creatures),
  \item $\val$, $\nor$ and $\cS$ are functions with domain $\cK$
\end{itemize}
such that for all $a$-atomic creatures $w\in\cK$ the following holds:
\begin{itemize}
  \item $\nor(w)\geq 0$,
  \item $\val(w)\subseteq A$ is nonempty,
  \item $\cS(w)$ is a subset of $\cK$,
  \item $w\in\cS(w)$; and if $w_2\in\cS(w_1)$ and $w_3\in\cS(w_2)$
        then $w_3\in\cS(w_1)$,
  \item if $v\in\cS(w)$ then $\val(v)\subseteq \val(w)$
    and $\nor(v)\leq \nor(w)$,
  \item if $|\val(w)|=1$ then $\nor(w)<1$.
\end{itemize}

As usual we get notions of bigness and halving, as well as decisiveness
as introduced in~\cite{MR2499421}:

\begin{itemize}
  \item $v\in \cSp^x(w)$ means $v\in\cS(w)$ and 
    $\nor(v)>\nor(w)-x$.
  \item $w\in\cK$ is $(B,x)$-big, if for all
    $F:\val(w)\to B$ there is a $v\in\cSp^x(w)$
    such that $F\restriction \val(v)$ is constant.
  \item $w$ is hereditary $(B,x)$-big, if every $v\in\cS(w)$
    with norm at least 1 is $(B,x)$-big.
  \item The atomic parameter $a$
    is $(B,x)$-big, if every $w\in\cK$
    with norm at least 1 is $(B,x)$-big.
  \item $w\in\cK$ is $x$-halving, if there is a 
    $\half(w)\in\cSp^x(w)$ such that for all $v\in\cS(\half(w))$
    with norm bigger than $0$
    there is
    a $v'\in \cSp^x(w)$ with $\val(v')\subseteq \val(v)$.
    We call this $v'$
    ``unhalved version of $v$'', or we say that we ``unhalve $v$'' to
    get $v'$.
  \item The atomic parameter $a$
    is $x$-halving, if every $w\in\cK$ with norm bigger than
    1 is $x$-halving.
  \item $w\in\cK$ is $(K,m,x)$-decisive, if
    there are $v^-,v^+\in \cSp^x(w)$ such that 
\begin{equation}
  |\val(v^-)|\leq K
  \quad \text{ and } \quad
  v^+\text{ is hereditarily }(2^{K^m},x)\text{-big}.
\end{equation}
    $v^-$ is called a $K${\em -small successor}, and
    $v^+$ a $K${\em-big successor} of $w$.
  \item $w$ is $(m,x)$-decisive if $w$ is $(K,m,x)$-decisive for some $K$.
  \item $\cK$ is $(m,x)$-decisive if every $w\in \cK$ with
    $\nor(w)>1$ is $(m,x)$-decisive.
  \item 
  An atomic-parameter is $M$-nice with maximal norm $m$, if
  it is $(2^M,1/M^2)$-big, $1/M$-halving and $(M,1/M^2)$-decisive
  and $m=\max(\nor(w):\, w\in\cK)$.
\end{itemize}

\begin{Facts}\label{facts:214f}
  \begin{enumerate}
    \item\label{item:existence}
      Given $M,m\in\omega$ there is an $M$-nice atomic-parameter
      with maximal norm $m$.
    \item\label{item:atomlem1}
      Assume that an atomic paramter is $M$-nice,
      that $\nor(w_i)>2$ for all $i\in M$, 
      and that $F:\prod_{i\in M}\val(w_i)\to 2^{M}$.
      Then there are $v_i\in\cSp^{1/M}(w_i)$ such that
      $F\restriction \prod_{i\in m}\val(v_i)$ is constant.
  \end{enumerate}
\end{Facts}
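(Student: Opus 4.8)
Both assertions of~\ref{facts:214f} are, up to bookkeeping, the theory of \emph{decisive creatures} from~\cite{MR2499421}, and the plan is simply to carry those arguments out in the present notation. The first assertion is an explicit construction, the second an iteration of bigness over the $M$ coordinates.

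For the first assertion I would build the $a$-atomic creatures over a finite set $A$ whose cardinality is a suitably fast-growing function of $M$ and $m$, equipped with a norm $\phi$ that grows \emph{very slowly} in the cardinality of the underlying set, so that passing to a much smaller subset costs almost no norm. Concretely, an $a$-atomic creature is (essentially) a nonempty $S\subseteq A$ carrying a norm value bounded by $\phi(|S|)$ together with a recorded ``halving level''; $\val$ is the underlying subset and $\cS(w)$ consists of the norm-respecting subcreatures, so the axioms of an atomic parameter are immediate. Bigness $(2^M,1/M^2)$: given $F\colon\val(w)\to 2^M$, pigeonhole yields a colour class of relative density $\geq 2^{-M}$, and slow growth of $\phi$ makes the resulting norm drop below $1/M^2$. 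Halving: use the standard device of recording inside the creature that it was halved, so that any nonzero-norm strengthening of $\half(w)$ can be un-halved to a strengthening of $w$ of norm $\geq\nor(w)-x$ whose values refine it. Decisiveness $(M,1/M^2)$: the $K$-small successor is a subcreature of cardinality $\approx K$, which still has norm $>\nor(w)-1/M^2$ precisely because $\phi$ is slow (so $K$ may be chosen far below $|\val(w)|$), while the $K$-big successor is a subcreature which, because $|A|$ is enormous relative to any $2^{K^M}$ that can occur, is hereditarily $(2^{K^M},1/M^2)$-big. I expect the real work to be the simultaneous calibration of $|A|$ and $\phi$ so that bigness, halving and decisiveness all hold at once while some creature still attains norm $m$ — this balancing is exactly the delicate heart of ``decisive creatures''.

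For the second assertion I would argue by induction on $M$, noting first that an $M$-nice atomic parameter is also $(M-1)$-nice (bigness, halving and decisiveness all move in the favourable direction as $M$ decreases); more precisely one proves the slightly more flexible statement in which the norm threshold is allowed to decrease along the recursion, so that a norm bound like $>2$ has enough room to survive. In the inductive step, apply $(M,1/M^2)$-decisiveness to $w_1,\dots,w_{M-1}$ (each has norm $>2>1$), replacing each by a $K_i$-small successor, so $\prod_{1\leq i<M}|\val(\cdot)|\leq\prod_{1\leq i<M}K_i$; then colour $\xi_0\in\val(w_0)$ by the function $\vec\eta\mapsto F(\xi_0,\vec\eta)$, a colouring into at most $(2^M)^{\prod_{i\geq 1}K_i}\leq 2^{K_0^M}$ colours. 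The inequality $K_0^M\geq M\prod_{i\geq 1}K_i$ is exactly what the decisiveness exponent $M$ buys, and here one uses that all the $w_i$ lie in the same $\cK$, so the decisiveness parameters are uniform and may be taken $\geq M$. Since the $K_0$-big successor of $w_0$ is hereditarily $(2^{K_0^M},1/M^2)$-big, one application of bigness to it yields a $v_0$ on which $F$ no longer depends on the $0$-th coordinate; now apply the inductive hypothesis to the small successors $v_1^-,\dots,v_{M-1}^-$. Finally one checks the norm budget: each coordinate is touched only $O(M)$ times, each time losing $O(1/M^2)$, so the norms stay above $1$ throughout (this is what the slack up to $2$ is for) and each resulting creature lands in $\cSp^{1/M}(w_i)$. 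The main obstacle here is this budget bookkeeping together with verifying the parameter inequality at every level of the recursion; both are routine once the first assertion has been set up with enough care.
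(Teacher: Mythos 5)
Note first that the paper does not reprove these Facts at all: its ``proof'' is a citation to \cite{MR2499421} ((1) is Lemma~6.1 there, (2) is Corollary~4.4), so what you are really attempting is a reconstruction of that paper's arguments. Your part (2) is close in spirit to the real argument, but your part (1) has a genuine gap, and it is not a calibration issue. Creatures that are just subsets $S$ of a single finite set $A$, normed by a slowly growing function of $|S|$ (plus a recorded halving level), can indeed be made $(2^M,1/M^2)$-big and $1/M$-halving, but they can \emph{never} be made $(M,1/M^2)$-decisive, no matter how $|A|$ and the norm function are chosen. Take $w$ of norm at least $2$ and let $K$, $v^-$, $v^+$ witness decisiveness. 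Since $\nor(v^-)>2-1/M^2$ and $|\val(v^-)|\le K$, in a cardinality-homogeneous scheme there are sets of size at most $K$ carrying norm $>2-1/M^2$; hence $v^+$, whose norm is also $>2-1/M^2$, has a successor $T$ with $|\val(T)|\le K$ and $\nor(T)>2-1/M^2>1$. Hereditary $(2^{K^M},1/M^2)$-bigness of $v^+$ would force $T$ to be $(2^{K^M},1/M^2)$-big; but an injective colouring of $\val(T)$ (possible since $2^{K^M}>K\ge|\val(T)|$) only admits singleton homogeneous successors, and by your own axioms singletons have norm $<1<2-2/M^2$. So the dichotomy ``$K$-small versus hereditarily $2^{K^M}$-big'' is unattainable when the norm is uniform in cardinality. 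The construction in \cite{MR2499421} is essentially non-homogeneous: the value set is stratified into levels of extremely fast growing size with level-dependent norms, so that a creature either concentrates on a low level (giving the $K$-small successor) or retreats entirely to a level on which even norm-one creatures are far larger than $2^{K^m}$ (giving the hereditarily big successor). That stratification is the missing idea.

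In part (2), the step ``the decisiveness parameters are uniform and may be taken $\geq M$'' is unjustified and false for genuinely decisive creatures: decisiveness hands you, for each $w_i$, some $K_i$ which you can neither raise (hereditary $(2^{K^M},x)$-bigness becomes harder as $K$ grows) nor lower (the bound $|\val(v^-)|\le K$ becomes harder), and the $K_i$ of different creatures are wildly different. The standard repair is the comparison trick the present paper itself uses in Lemmas~\ref{lem:disjointvalue} and~\ref{lem:hilfslemiz612}: repeatedly select the coordinate with \emph{minimal} $K_i$, give it its small successor and all remaining coordinates their big successors (a $K_j$-big successor with $K_j\ge K_i$ is in particular $K_i$-big), and then remove the dependence coordinate by coordinate from the top of the resulting order downwards by hereditary bigness, finishing with one plain $(2^M,1/M^2)$-bigness application at the bottom. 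Your induction on $M$ also leaks norm: the inductive hypothesis at $M-1$ only controls the loss by $1/(M-1)>1/M$, so one should keep the niceness parameter $M$ fixed and do the accounting globally (at most $M$ touches per coordinate, each costing strictly less than $1/M^2$), which is what makes the conclusion $v_i\in\cSp^{1/M}(w_i)$ come out.
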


\begin{proof}
  This is shown in ~\cite{MR2499421}: (1) is Lemma~6.1, (2) is
  Corollary~4.4.
\end{proof}

\subsection{The forcing}

\begin{Def}\label{def:initialsequence}
  We define by induction on $n\in\omega$ the natural numbers
  $\maxposs(n)$,
  $\maxnor(n)$, 
  $\maxsupp(n)$, 
  $\Bmin(n)$,
  $k^*(n)$, 
  $\gmin(n)$ and $\fmax(n)$; as well as
  $f_{n,m}$ and $g_{n,m}$ for  $m\in k^*(n)$:
\begin{enumerate}
  \item Set $\fmax(-1)=\maxsupp(-1)=1$.
  \item\label{item:maxposs} Set $\maxposs(n)=1+(\fmax(n-1))^{n \maxsupp(n-1)}$.
    \\
    (By induction, we will see that 
    $|\poss(p,n)|<\maxposs(n)$ for every condition $p$.)
  \item Set $\maxnor(n)=1+2^{n\cdot \maxposs(n)}$.
    \\
    (This will later be used to guarantee there is an
    $n$-ml-creature with norm $n$, i.e., that $Q_\mathfrak p$ is nonempty.) 
  \item\label{item:maxsupp} Set $\maxsupp(n)=1+2^{\maxnor(n)}$.
    \\
    (We will later define the $n$-ml-creatures so that
    $|\supp(\cc)|\leq \maxsupp(n)$ for all $\cc\in\cK(n)$.)
  \item\label{item:bmin} Pick $\Bmin(n)$ large with respect to $\maxsupp(n)$.
     \\
     More specifically: larger than $\fmax(n-1)^{n\fmax(n-1)^{1+(n\maxsupp(n))}}$ and larger than $2\maxsupp(n)^2$.
  \item Pick $k^*(n)$ large with respect to $\Bmin(n)$, which means that
    we can fix a $\Bmin(n)$-nice atomic paramter
    $a_{n,*}=(k^*_{n}, \cK_{n,*}, \val_{n,*}, \nor_{n,*}, \cS_{n,*})$ with
    maximal norm $\maxnor(n)$. 
  \item\label{item:gmin} Pick $\gmin(n)=g_{n,0}$ 
     large with respect to $k^*(n)$. 
     \\
     More specifically, we will need: larger than 
     $\fmax(n-1)^{n\maxsupp(n)}\cdot \maxposs(n)\cdot k^*(n)^{\maxsupp(n)}$
     and than $\fmax(n-1)^{n\fmax(n-1)}$.
%  \item Pick $B^*_{n,m}$ large with respect to $g_{n,m}$.
  \item Pick $f_{n,m}$ large with respect to $g_{n,m}$,
    which means that we can fix an $g_{n,m}$-nice atomic parameter
    $a_{n,m}=(f_{n,m}, \cK_{n,m}, \val_{n,m}, \nor_{n,m}, \cS_{n,m})$ with
    maximal norm $\maxnor(n)$.
  \item\label{item:g} Pick $g_{n,m+1}$ large with respect to $f_{n,m}$.
     \\
     More specifically, we need: larger than $(f_{n,m})^{{f_{n,m}}^{k^*(n)}}$.
  \item Set $\fmax(n)=f_{n,k^*(n)-1}$.
\end{enumerate}
\end{Def}

We choose an index set $I^*$ containing $\mu$ and 
sets $I_\epsilon$ for all $\epsilon\in \mu$:
\begin{itemize}
  \item
    For every $\epsilon$ in $\mu$, pick
    some $I_\epsilon$ of size $\kappa_\epsilon$
    such that $\mu$ and all the $I_\epsilon$ are
    pairwise disjoint. Set $I^*=\mu\cup\bigcup_{\epsilon\in\mu}I_\epsilon$.
  \item
    We define $\varepsilon:I^*\setminus\mu \to I^*$ by
    $\varepsilon(\alpha)=\epsilon$ for
    $\alpha\in I_{\epsilon}$.
    A subset $u$ of $I^*$ is $\varepsilon$-closed,
    if for all $\varepsilon(\alpha)\in u$
    for all $\alpha\in u\setminus\mu$.
\end{itemize}

For $\epsilon\in \mu$ we set $\POSS_{=m,\{\epsilon\}}$ to be $k^*(m)$,
and for $\alpha\in I^*\setminus \mu$ we set $\POSS_{=m,\{\alpha\}}$
to be $\fmax(m)$.

\begin{Def}
  We define the ml-parameter $\mathfrak p(n)$:
  An $n$-ml-creature $\cc$ is a triple $(u^\cc,\bar w^\cc,d^\cc)$
  satisfying the following:
  \begin{itemize}
    \item $u^\cc\subset I^*$ is nonempty, $\epsilon$-closed,
     and of size at most $\maxsupp(n)$.
    \item $\bar w^\cc$ consists of the sequences
      $(w^\cc_\epsilon)_{\epsilon\in u^\cc\cap \mu}$
      and 
      $(w^\cc_{\alpha,k})_{\alpha\in u^\cc\cap I_\epsilon, k\in \val(w^\cc_\epsilon})$
      such that
      $w^\cc_\epsilon$ is an $a_{n,*}$-creature
      and $w^\cc_{\alpha,k}$ is an $a_{n,k}$-creature.
      We will write $A^\cc_\epsilon$ (or $A^\cc_{\alpha,k}$)
      for $\val(w^\cc_\epsilon)$ (or $\val(w^\cc_{\alpha,k})$,
      respectively).
    \item $d^\cc\in \mathbb R_{\geq 0}$.\footnote{We could restrict this to
      a countable set; moreover given $\bar w^\cc$
      we can even restrict $d^\cc$ to a finite set.}
  \end{itemize}
  Given such an nl-creature $\cc$, we
  define the creature-properties of $\cc$ as follows:
  \begin{itemize}
    \item $\supp(\cc)\coloneqq u^\cc$.
    \item $\val(\cc)$ is the set of those $\mathbf f\in \VAL_{n,u^\cc}$
      that satisfy the following for all $\eta\in\POSS_{n,u^\cc}$:
      If $\epsilon\in u^\cc\cap \mu$,
      then $\mathbf f(\eta)(n,\epsilon)\in A^\cc_\epsilon$,
      and if $\alpha\in u^\cc\cap I_\epsilon$ and
      $\mathbf f(\eta)(n,\epsilon)=k$ then
      $\mathbf f(\eta)(n,\alpha)\in A^\cc_{\alpha,k}$.
    \item $\nor(\cc)\coloneqq 
      (1/\maxposs(n))\,\cdot\,
      \log_2\left[\minnor(\cc) -\log_2(|\supp(\cc)|)-d \right]$,
      where we set $\minnor$ to be the minimum of the norms of all
      atomic creatures used, i.e.,
\begin{equation}
  \minnor(\cc)\coloneqq 
      \min\left(\{\nor_{n,*}(w^\cc_\epsilon):\, \epsilon\in u\cap \mu\}\cup
        \{\nor_{n,k}(w^\cc_{\alpha,k}):\, \alpha\in u\cap I_\epsilon,k\in A^\cc_\epsilon\}\right).
\end{equation}
      (If $\nor(c)$ would be negative or undefined when calculated this way, we
      set it $0$.) 
    \item $\suppls(\cc)\coloneqq \supp(\cc)$ and $\norls(\cc)\coloneqq n$ 
      (so here we have the pure lim-inf case).
  \end{itemize}
\end{Def}

So our ml-creatures have rather ``restricted memory'', they
only do not ``look down'' at all, and horizontally
only ``look from $\alpha$ to $\epsilon(\alpha)$''. More exactly:
\begin{Fact}
  $\eta\in \poss(p,n)$ iff
  \begin{itemize}
    \item $\eta$ is compatible with $\trnk(p)$,
    \item for all $m$ with $\trnklg(p)\leq m< n$, $\cc:=p(m)$,
      and $\alpha \in I_\epsilon\cap \supp(\cc)$
      we have: $\eta(m,\epsilon)\in A^\cc_\epsilon$ and
      $\eta(m,\alpha)\in A^\cc_{\alpha,\eta(m,\epsilon}$.
  \end{itemize}
\end{Fact}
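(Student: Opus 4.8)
The plan is to prove the Fact by induction on $n$, the one preparatory step being the explicit description of the value-sets of the concrete $n$-ml-creatures of this section. Fix such an $n$-ml-creature $\cc=(u^\cc,\bar w^\cc,d^\cc)$. Since $\val(\cc)$ is by definition the set of $\mathbf f\in\VAL_{n,u^\cc}$ cut out by conditions that look only at each value $\mathbf f(\eta)(n,\cdot)$, this forcing lies in the simplified setting of Remark~\ref{rem:simplecase}: $\mathbf f\in\val(\cc)$ iff $\mathbf f(\eta)\in\cc[\eta]$ for all $\eta$, and there is no downwards memory. Explicitly, for $\eta\in\POSS_{n,u^\cc}$ one has
\[
  \cc[\eta]=\bigl\{\nu\in\POSS_{n+1,u^\cc}:\ \nu\restriction n=\eta,\ \nu(n,\epsilon)\in A^\cc_\epsilon\ (\epsilon\in u^\cc\cap\mu),\ \nu(n,\alpha)\in A^\cc_{\alpha,\nu(n,\varepsilon(\alpha))}\ (\alpha\in u^\cc\setminus\mu)\bigr\}.
\]
The inclusion ``$\subseteq$'' is immediate from the definition of $\val(\cc)$; for ``$\supseteq$'' one realizes any such $\nu$ as $\mathbf f(\eta)$ for a suitable $\mathbf f\in\val(\cc)$, sending every other $\eta'$ to its extension by the level-$n$ slice of $\nu$ (legal precisely because the constraints ignore the levels below $n$). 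Each $A^\cc_\epsilon=\val(w^\cc_\epsilon)$ and $A^\cc_{\alpha,k}=\val(w^\cc_{\alpha,k})$ is nonempty, so $\cc[\eta]\neq\emptyset$, every member of $\cc[\eta]$ extends $\eta$, and $\cc[\eta']$ arises from $\cc[\eta]$ by swapping the $\eta$-part --- this last being the no-downwards-memory property.

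Now the induction. For $n\le\trnklg(p)$ the set $\poss(p,n)$ is the singleton $\{\trnk(p)\restriction(n\times\dom(p))\}$ and the Fact's second clause is vacuous; since $n\le\trnklg(p)\le\trnklg(p,i)$ for all $i\in\dom(p)$, the trunk is total on $n\times\dom(p)$, so ``$\eta$ compatible with $\trnk(p)$'' pins $\eta$ down to exactly that singleton, and the two sides agree. For $n>\trnklg(p)$, unfold the recursive clause defining $\poss(p,n)$ from $\poss(p,n-1)$ and the creature $p(n-1)$: $\nu\in\poss(p,n)$ iff $\nu$ is compatible with $\trnk(p)$ and $\nu\restriction\supp(p,n-1)\in p(n-1)[\nu\restriction((n-1)\times\supp(p,n-1))]$ for some $\eta\in\poss(p,n-1)$ extending which $\nu\restriction((n-1)\times\dom(p))$. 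By the preliminary computation, this membership in $p(n-1)[\,\cdot\,]$ is exactly the assertion that $\nu(n-1,\epsilon)\in A^{p(n-1)}_\epsilon$ for $\epsilon\in\supp(p,n-1)\cap\mu$ and $\nu(n-1,\alpha)\in A^{p(n-1)}_{\alpha,\nu(n-1,\varepsilon(\alpha))}$ for $\alpha\in\supp(p,n-1)\setminus\mu$; and because $\supp(p,m)\subseteq\supp(p,n-1)$ for $m<n-1$, the induction hypothesis shows that a witnessing $\eta\in\poss(p,n-1)$ exists iff $\nu\restriction((n-1)\times\dom(p))$ itself meets the Fact's description through level $n-1$ (take $\eta$ to be that restriction). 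Conjoining with the surviving level-$(n-1)$ conditions yields the Fact's description through level $n$, closing the induction.

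The argument is pure definition-unwinding; the single point requiring attention is the existential ``for some $\eta\in\poss(p,n-1)$'' in the recursion, together with the fact that $\poss(p,\cdot)$ never constrains a coordinate $(m,i)$ with $i\notin\supp(p,m)$ beyond what the trunk already fixes. Both are disposed of by the no-downwards-memory property established above --- which makes the characterization strictly level-by-level, with only the within-level dependence of an $\alpha$-coordinate on its $\varepsilon(\alpha)$-coordinate --- and by the monotonicity $\supp(p,m)\subseteq\supp(p,m+1)$ that is built into the notion of a condition.
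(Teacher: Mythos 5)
Your argument is correct: the paper states this Fact without proof, treating it as immediate from the definitions, and your explicit computation of $\cc[\eta]$ (placing the forcing in the simplified setting of Remark~\ref{rem:simplecase}) followed by induction on $n$ is precisely the definition-unwinding the paper leaves implicit. Your reading of the recursion defining $\poss(p,n)$ via the creature $p(n-1)$ (rather than the paper's literal, off-by-one ``$p(n)[\eta\restriction\supp(p,n)]$'') is the intended one, and your observation that the existential witness $\eta$ must coincide with $\nu\restriction((n-1)\times\dom(p))$ closes the only point needing care.
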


\begin{Lem}\label{lem:qwh232}
    \begin{itemize}
      \item $\cK(n)$ is $(\fmax(n-1)^{n \fmax(n-1)},1)$-big.
%      \item $\cK(n)$ is $(\Bmin(n),1)$-big,
      \item $\cK(n)$ is $1/\maxposs(n)$-halving.
      \item $\mathfrak p$ satisfies the local $\Delta$-property.
      \item The generic element lives on all of $I^*$ (i.e., the 
        domain of the generic sequence is $\omega\times I^*$).
    \end{itemize}
\end{Lem}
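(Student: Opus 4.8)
The four assertions all reduce to facts about the atomic creatures $w^\cc_\epsilon$ and $w^\cc_{\alpha,k}$ via the niceness properties from Facts~\ref{facts:214f}, so the plan is to deal with each item separately, pushing everything down to the atomic level. For \emph{bigness}: given an $n$-ml-creature $\cc$ with $\nor(\cc)>1$ and a colouring $G\colon\POSS_{n+1,\supp(\cc)}\to B$ with $B=\prod_{m<n}B(m)$ (here $B(n)=\fmax(n-1)^{n\fmax(n-1)}$, and note $|\POSS_{n,\supp(\cc)}|<\maxposs(n)$ is tiny compared to $B$), I would fix each $\eta\in\POSS_{n,\supp(\cc)}$ in turn and apply atomic bigness (Facts~\ref{facts:214f}(\ref{item:atomlem1})) simultaneously to the finitely many atomic creatures $w^\cc_\epsilon$ ($\epsilon\in u^\cc\cap\mu$) and $w^\cc_{\alpha,k}$ indexing the possible one-step extensions $\cc[\eta]$; the hypothesis $\nor(\cc)>1$ forces $\minnor(\cc)$ to be large enough that every atomic creature involved has norm $>2$ (by the definition of $\nor(\cc)$ via $\log_2$), so the atomic lemma applies and costs at most $1/\Bmin(n)$ (resp.\ $1/g_{n,m}$) per atomic creature. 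Iterating over the at most $\maxposs(n)$ many $\eta$'s, and using $|u^\cc|\le\maxsupp(n)$, the total norm loss in $\minnor$ is absorbed by the $1/\maxposs(n)$ and $\log_2$ factors in the definition of $\nor$, giving a successor $\cd\in\cSp^1(\cc)$ with $G$ factoring through $\eta\restriction n$. The counting in step~(\ref{item:bmin}) and~(\ref{item:g}) of Definition~\ref{def:initialsequence} was set up precisely so that $2^{K^m}$-bigness of the big successors survives; this bookkeeping is the place to be careful.

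For \emph{halving}: again I reduce to the atomic level. Given $\cc$ with $\nor(\cc)>1$, I define $\half(\cc)$ by replacing each atomic creature by its atomic halved version ($\half_{n,*}(w^\cc_\epsilon)$, etc.) and keeping $u^\cc,d^\cc$ unchanged. Atomic halving is at cost $1/\Bmin(n)$ and $1/g_{n,m}$ respectively; summing over the at most $\maxsupp(n)$ coordinates and the $k^*(n)$-many fibre-creatures per coordinate, and dividing by $\maxposs(n)$ inside $\nor$, the cost is $<1/\maxposs(n)$. Given $\cd\in\cS(\half(\cc))$ with $\nor(\cd)>0$: since $\nor(\cd)>0$ means $\minnor(\cd)>\log_2|\supp(\cd)|+d^\cd+1$, each atomic component of $\cd$ has positive norm, so I can unhalve each one atomically; the unhalved ml-creature $\cd'$ has the same support, satisfies $\cd'[\eta]\subseteq\cd[\eta]$ coordinatewise (here I use the ``restricted memory'' Fact, so that $\cc[\eta]$ is essentially a product over coordinates of the atomic value-sets), and $\minnor(\cd')\ge\minnor(\cc)-(\text{small})$, hence $\nor(\cd')\ge\nor(\cc)-1/\maxposs(n)$ as required.

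For the \emph{local $\Delta$-property}: the local type of $(\cc,\bar i)$ should record $n$, $u^\cc$ up to the enumeration $\bar i$ (i.e.\ which $\epsilon\in\mu$-slots and which $I_\epsilon$-slots occur and the pattern of the $\varepsilon$-function among them), the isomorphism types of all the atomic creatures $w^\cc_\epsilon, w^\cc_{\alpha,k}$, and $d^\cc$ — finitely many data once $n$ is fixed. Given two $\cc_1,\cc_2$ of the same type with $\bar i_1,\bar i_2$ agreeing on the overlap: since the memory is only ``$\alpha\to\varepsilon(\alpha)$'', the amalgam $\cd$ with $u^\cd=u^{\cc_1}\cup u^{\cc_2}$, $d^\cd=d^{\cc_1}$, and atomic creatures copied from $\cc_1$ on $u^{\cc_1}\setminus u^{\cc_2}$, from $\cc_2$ on $u^{\cc_2}\setminus u^{\cc_1}$, and (say) from $\cc_1$ on the overlap (which equals the $\cc_2$-copy by type-equality and agreement of enumerations), is a legal $n$-ml-creature in $\cS(\cc_1)\cap\cS(\cc_2)$; the only thing to check is the norm, and $\minnor(\cd)=\min(\minnor(\cc_1),\minnor(\cc_2))=\minnor(\cc_1)$ while $|\supp(\cd)|\le 2\maxsupp(n)$, so $\log_2|\supp(\cd)|$ grows by at most $1$ and $\nor(\cd)\ge\nor(\cc_1)-1$ by the structure of the $\log_2$-definition (this is exactly the ``norm does not decrease by more than $1$ if we make the support twice as big'' remark after Definition~\ref{def:sufficient}, and it is why $\maxnor(n)$ was chosen as in step~(3) of Definition~\ref{def:initialsequence}). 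Finally, the \emph{generic lives on all of $I^*$}: this is Lemma~\ref{lem:genericdomain}; I need to verify its hypothesis, i.e.\ that any single coordinate $i$ can be added to the support of any $\cc$ with $\nor(\cc)>M$ at bounded norm cost. If $i\in I_\epsilon$ I must add $\{\epsilon,i\}$ to stay $\varepsilon$-closed (so $u=\{\epsilon,i\}$ works, or $u=\{i\}$ if $i\in\mu$); adding these one or two coordinates enlarges $|\supp(\cc)|$ by at most $2$, so again the $\log_2|\supp|$ term and hence $\nor$ drops by a bounded amount, and I pick arbitrary atomic creatures of large norm (available since the atomic parameters have maximal norm $\maxnor(n)$) for the new slots; then Lemma~\ref{lem:genericdomain} applies.

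The main obstacle is the bigness item: one must check that applying atomic bigness $\maxposs(n)$-many times (once per $\eta$), each time to $\le\maxsupp(n)\cdot(1+k^*(n))$-many atomic creatures, with colouring range $\prod_{m<n}B(m)$, stays within the ``nice'' regime — i.e.\ that $\Bmin(n)$ and the $g_{n,m}$ were chosen large enough in Definition~\ref{def:initialsequence} (steps~(\ref{item:bmin}),~(\ref{item:gmin}),~(\ref{item:g})) that $2^{\Bmin(n)}$-bigness and $2^{g_{n,m}}$-bigness of the atomic creatures translate into $(\prod_{m<n}B(m),1)$-bigness of $\cK(n)$ after the $\log_2/\maxposs(n)$ normalisation in $\nor(\cc)$. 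Everything else is a transcription of the corresponding atomic statements in \cite{MR2499421}.
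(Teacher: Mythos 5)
Your treatment of the local $\Delta$-property and of the generic domain is essentially the paper's argument, but the two central items both have real gaps. The most serious one is \emph{halving}: you halve at the atomic level, whereas the paper halves by moving the auxiliary real $d^\cc$, setting $\half(\cc)=(u^\cc,\bar w^\cc,d')$ with $d'=d^\cc+\tfrac12\left[\minnor(\cc)-\log_2(|\supp(\cc)|)-d^\cc\right]$, and this difference is not cosmetic. In your scheme, take $\cd\in\cS(\half(\cc))$ with $\nor(\cd)>0$; positivity only gives $\minnor(\cd)>1+\log_2(|\supp(\cd)|)+d^\cd$, and $\cd$ may have strictly larger support than $\cc$ (successors may grow the support, and the properness proof un-halves exactly such conditions). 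Atomic un-halving restores large norms only at the coordinates of $u^\cc$; at the new coordinates of $u^\cd\setminus u^\cc$ the atomic creatures may have norm barely above $1$, they are not successors of any halved creature, and the requirement $\cd'[\eta]\subseteq\cd[\eta]$ forces you to keep subcreatures of them, so $\minnor(\cd')$, hence $\nor(\cd')$, cannot be pushed back up to $\nor(\cc)-1/\maxposs(n)$; your claimed bound $\minnor(\cd')\geq\minnor(\cc)-(\text{small})$ is unjustified. The whole point of the $d$-trick is that $d^\cd\geq d'$ makes positivity of $\nor(\cd)$ force \emph{every} atomic creature of $\cd$, including those at new coordinates, to have norm above half the original budget, so that resetting $d$ to $d^\cc$ recovers the norm; your proposal never explains what $d^\cc$ is for, and without it this item fails.

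For \emph{bigness}, applying Fact~\ref{facts:214f}(\ref{item:atomlem1}) ``simultaneously'' to the $w^\cc_\epsilon$ and the $w^\cc_{\alpha,k}$ is not licensed: that fact homogenizes a colouring of a product $\prod_i\val(w_i)$, but $\cc[\eta]$ is a fibred set --- which creature $w^\cc_{\alpha,k}$ is active at $\alpha$ depends on the value $k$ chosen at $\varepsilon(\alpha)$ --- so it is not a product over the coordinates of $u^\cc$. The paper's proof is necessarily two-staged: for each pair $(\eta,x)$ with $x$ a value of the $\mu$-part (at most $\fmax(n-1)^{n\maxsupp(n)}\cdot k^*(n)^{\maxsupp(n)}<\gmin(n)$ many pairs, each application costing $1/\gmin(n)$) one homogenizes the creatures $w^\cc_{\alpha,x(\varepsilon(\alpha))}$, and only afterwards removes the dependence on the $\mu$-part, using that the $a_{n,*}$-creatures are $\Bmin(n)$-nice with $\Bmin(n)$ larger than $B^{|\POSS_{n,\supp(\cc)}|}$. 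Your counting also conflates $\poss(p,n)$ with $\POSS_{n,\supp(\cc)}$: the latter is bounded only by $\fmax(n-1)^{n\maxsupp(n)}$, not by $\maxposs(n)$ (which is computed from $\maxsupp(n-1)$), and it is $\gmin(n)$ and $\Bmin(n)$, not $\maxposs(n)$, that must absorb the iteration. Finally, in the local $\Delta$-argument you should note that $\nor(\cc_i)>1$ together with the choice $\maxsupp(n)=1+2^{\maxnor(n)}$ forces $|u^{\cc_i}|<\maxsupp(n)/2$, so the amalgam is a legal creature; with your bound $|\supp(\cd)|\leq 2\maxsupp(n)$ it would violate the size restriction on $n$-ml-creatures.
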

So we can use Lemma~\ref{lem:al2cc} and Corollary~\ref{cor:proper}
(since $\maxposs(n)$ witnesses that $\mathfrak p$ has 
sufficient bigness and halving, as defined in~\ref{def:sufficient}),
and get:
\begin{Cor}\label{cor:gurk} 
  $Q_\mathfrak p$ is proper, $\omega^\omega$-bounding and $\al2$-cc.
  If $p\in Q_\mathfrak p$
  forces that $r(n) < \fmax(n)^{\fmax(n)}$ for all $n$, then there
  is a $q\leq p$ that $n$-decides $r\restriction n$ for all $n$.
\end{Cor}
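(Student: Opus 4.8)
The plan is to note first that this Corollary is just the assembly of Lemma~\ref{lem:qwh232} with Lemma~\ref{lem:al2cc} and Corollary~\ref{cor:proper}. One checks that $\maxposs$ witnesses ``sufficient bigness and halving'' in the sense of Definition~\ref{def:sufficient}: clauses (2) and (3) there are the first two bullets of Lemma~\ref{lem:qwh232} (with $(2,1)$-bigness a weakening of $(\fmax(n-1)^{n\fmax(n-1)},1)$-bigness), and clause (1) holds by the recipe in Remark~\ref{rem:sufficient} — each $p(m)$ has support of size ${<}\maxsupp(m)\leq\maxsupp(n-1)$, so there are at most $n\maxsupp(n-1)$ constrained coordinates below level $n$, each with fewer than $\fmax(n-1)$ possibilities, and $\maxposs(n)=1+\fmax(n-1)^{n\maxsupp(n-1)}$ was chosen for exactly this. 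Then Corollary~\ref{cor:proper} gives properness and $\omega^\omega$-boundedness; its second bullet, fed with $B(m)=\fmax(m)^{\fmax(m)}$ and the first bullet of Lemma~\ref{lem:qwh232} (since $\prod_{m<n}\fmax(m)^{\fmax(m)}\leq\fmax(n-1)^{n\fmax(n-1)}$), gives the rapid-reading clause; and Lemma~\ref{lem:al2cc}, fed with the third bullet of Lemma~\ref{lem:qwh232}, gives $\al2$-cc (CH being available). So the work is in Lemma~\ref{lem:qwh232}, which I now sketch. Throughout I use that $\nor(\cc)>0$ forces $N_\cc\coloneqq\minnor(\cc)-\log_2|\supp(\cc)|-d^\cc>1$ (so every atomic creature in $\cc$ has norm ${>}1$) and $\nor(\cc)>1$ forces $N_\cc>2^{\maxposs(n)}$, and that by the ``restricted memory'' Fact preceding Lemma~\ref{lem:qwh232} the set of admissible new coordinates of $\cc$ does not depend on the part below $n$, so that strengthening inside $\cSp^x$ simply means shrinking the atomic creatures of $\cc$.

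For \emph{bigness}, given $\cc$ with $\nor(\cc)>1$ and $G\colon\POSS_{n+1,\supp(\cc)}\to B$ with $B=\fmax(n-1)^{n\fmax(n-1)}$, I would first use the memory Fact to recode $G$ on $\bigcup_\nu\cc[\nu]$ as a single function on the top layer of $\cc$ into $B^{\POSS_{n,\supp(\cc)}}$, reducing to making this constant on a shrinking of the top layer (the needed $G'$ is then read off). The target set has size at most $\fmax(n-1)^{n\fmax(n-1)^{1+n\maxsupp(n)}}<\Bmin(n)<2^{\gmin(n)}$ by Definition~\ref{def:initialsequence}, items~(\ref{item:bmin}) and~(\ref{item:gmin}). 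Then I would run the decisiveness argument of~\cite{MR2499421}: apply decisiveness to each ``vertical'' creature $w^\cc_\epsilon$ ($\epsilon\in u^\cc\cap\mu$) to pass to a small successor with value-set of size at most $k^*(n)$; there remain at most $k^*(n)^{\maxsupp(n)}$ possible tuples of active branch-values, and for each such tuple the active ``horizontal'' creatures $w^\cc_{\alpha,\cdot}$ form a genuine finite family of $(2^{\gmin(n)},1/\gmin(n)^2)$-big atomic creatures, to which Facts~\ref{facts:214f}(\ref{item:atomlem1}) (iterated) applies. The total cost is at most $\maxsupp(n)\cdot k^*(n)^{\maxsupp(n)}/\gmin(n)^2<1$ in $\minnor$ (plus ${\leq}1/\Bmin(n)^2$ from the vertical creatures), so $N_\cc$ is more than halved and $\nor$ drops by at most $1/\maxposs(n)\leq 1$. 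I expect this to be the main obstacle: one must push the decisiveness bookkeeping through the two-level ``$\alpha$ reads $\varepsilon(\alpha)$'' branching, and must check that the growth rates fixed in Definition~\ref{def:initialsequence} ($\Bmin(n)$ huge over $\maxsupp(n)$ and over $B^{|\POSS_{n,\supp(\cc)}|}$, $k^*(n)$ huge over $\Bmin(n)$, $\gmin(n)$ huge over $k^*(n)^{\maxsupp(n)}$) really do keep the accumulated loss below $1$.

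For \emph{halving} I would carry the halving entirely in the bookkeeping real: let $\half(\cc)$ be $\cc$ with $d^\cc$ raised to $d^\cc+N_\cc/2$, so $\nor(\half(\cc))=\nor(\cc)-1/\maxposs(n)$, hence $\half(\cc)\in\cSp^{1/\maxposs(n)}(\cc)$; then for $\cd\in\cS(\half(\cc))$ with $\nor(\cd)>0$ the un-halved version $\cd'$ is just $\cd$ with its bookkeeping real reset to $d^\cc$, for which $N_{\cd'}=N_\cd+N_\cc/2>N_\cc/2$ gives $\nor(\cd')\geq\nor(\cc)-1/\maxposs(n)$, while $\supp$, $\suppls$ and the sets $\cd'[\eta]=\cd[\eta]$ stay unchanged — so $\cd'$ meets all the un-halved requirements.

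For the \emph{local $\Delta$-property} I would let the local type of $(\cc,\bar i)$ record $|\supp(\cc)|$, the combinatorial pattern of $\mu$-positions and $I_\epsilon$-blocks in the enumeration $\bar i$, all atomic creatures of $\cc$ pulled back along $\bar i$, and $d^\cc$ (restricted beforehand to a fixed finite set, as the definition of $\mathfrak p(n)$ permits): finitely many types, each determining $\minnor(\cc)$. Two pairs of the same type whose enumerations agree on $\supp(\cc_1)\cap\supp(\cc_2)$ then carry literally the same atomic creatures there, so I can take $\cd$ to be the amalgam — union the supports (still $\varepsilon$-closed), union the two atomic families, put $d^\cd=d^{\cc_1}$; since $\minnor(\cd)=\minnor(\cc_1)$ and the support at most doubles, $N_\cd\geq N_{\cc_1}-1$ and $\nor(\cd)\geq\nor(\cc_1)-1/\maxposs(n)\geq\nor(\cc_1)-1$. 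Finally, for ``the generic lives on all of $I^*$'' I would apply Lemma~\ref{lem:genericdomain} with $M=2$ and $u=\{i\}$ if $i\in\mu$, $u=\{i,\varepsilon(i)\}$ if $i\notin\mu$: given $\cc$ with $\nor(\cc)>2$, adjoin these indices to $\supp(\cc)$ and fill the finitely many missing atomic creatures with copies of the maximal-norm atomic creatures (norm $\maxnor(n)\geq\minnor(\cc)$), so that $\minnor$ is unchanged and the support grows by at most two elements; then $N_\cc$ drops by at most $2$ and $\nor$ by at most $1/\maxposs(n)<M$, so the hypotheses of Lemma~\ref{lem:genericdomain} are met and $\dom(\nugen)=\omega\times I^*$. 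This establishes Lemma~\ref{lem:qwh232}, and with it the Corollary.
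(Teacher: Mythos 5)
Your reduction of the Corollary itself is exactly the paper's: $\maxposs$ witnesses sufficient bigness and halving in the sense of Definition~\ref{def:sufficient} (clause (1) by the counting $|\poss(p,n)|\le\fmax(n-1)^{n\maxsupp(n-1)}<\maxposs(n)$, which is the paper's estimate), Corollary~\ref{cor:proper} then gives properness, $\omega^\omega$-bounding and, with $B(m)=\fmax(m)^{\fmax(m)}$ and $\prod_{m<n}\fmax(m)^{\fmax(m)}\le\fmax(n-1)^{n\fmax(n-1)}$, the rapid-reading clause, while Lemma~\ref{lem:al2cc} gives $\al2$-cc under CH. Your treatments of halving, of the local $\Delta$-property and of the domain of the generic in Lemma~\ref{lem:qwh232} also coincide with the paper's (same $d'$, same amalgam, same norm computations).

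The gap is in your sketch of the bigness clause of Lemma~\ref{lem:qwh232}, which is the substantial part. Bigness requires a $\cd\in\cSp^{1}(\cc)$ modulo which $G(\eta)$ depends \emph{only} on $\eta\restriction n$; in particular it must not depend on the level-$n$ entries $\eta(n,\epsilon)$ for $\epsilon\in\supp(\cc)\cap\mu$. Your two steps --- passing to small successors of the vertical creatures $w^\cc_\epsilon$ by decisiveness, then applying Fact~\ref{facts:214f}(\ref{item:atomlem1}) to the horizontal creatures separately for each vertical tuple $x$ --- only make $G$ independent of the horizontal coordinates \emph{for each fixed} $x$; shrinking $w^\cc_\epsilon$ to a $K$-small successor (its value set is a subset of $k^*(n)$ anyway) does nothing to erase the dependence on which value of $w^\cc_\epsilon$ is chosen. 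What is missing is the paper's final step: after the horizontal stage one is left with a function from $M=\prod_{\epsilon\in\supp(\cc)\cap\mu}A^\cc_\epsilon$ to $B^{|S|}$ (where $S=\POSS_{n,\supp(\cc)}$), and one applies Fact~\ref{facts:214f}(\ref{item:atomlem1}) once more to the $\Bmin(n)$-nice creatures $w^\cc_\epsilon$, using $|\supp(\cc)\cap\mu|\le\maxsupp(n)<\Bmin(n)$ and $B^{|S|}<\Bmin(n)$; that inequality is precisely why item~(\ref{item:bmin}) of Definition~\ref{def:initialsequence} demands $\Bmin(n)>\fmax(n-1)^{n\fmax(n-1)^{1+n\maxsupp(n)}}$ (you quote this bound, but only to compare it with $2^{\gmin(n)}$). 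Without this step the creature you build only witnesses that $G$ depends on $\eta\restriction n$ together with $\eta(n)\restriction\mu$, which is not bigness. Two smaller points: because you fold $S$ into the target, your horizontal stage needs $B^{|S|}\le 2^{\gmin(n)}$, which is not among the inequalities listed in Definition~\ref{def:initialsequence} (the paper avoids this by fixing the pair $(\eta,x)\in S\times M$, so that only $B<2^{\gmin(n)}$ and $|S\times M|<\gmin(n)$ are needed); and each application of Fact~\ref{facts:214f}(\ref{item:atomlem1}) costs $1/\gmin(n)$ per atomic creature, not $1/\gmin(n)^2$ --- the accounting still closes, but as written it is off.
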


\begin{proof}[Proof of Lemma~\ref{lem:qwh232}]
  First note a few obvious facts:
  For all $n$-ml-creatures $\cc$, we have
  \begin{equation}\label{eq:numPOSS}
      \left|\POSS_{n,\supp(\cc)}\right|\leq  \fmax(n-1)^{n\maxsupp(n)}
  \end{equation}
  and for a condition $p$ we get, according to~\ref{def:initialsequence}(\ref{item:maxposs}),
  \begin{equation}\label{eq:numposs}
      \left|\poss(p,n)\right|\leq  \fmax(n-1)^{n\maxsupp(n-1)}<\maxposs(n),
  \end{equation}
  According to~\ref{def:initialsequence}(\ref{item:maxsupp}), we get:
  If $|\supp(\cc)| \geq \maxsupp(n)/2$, then
  \begin{equation}\label{eq:doublesuppok}
    \nor(\cc)\leq
      1/\maxposs(n)\, \log_{2}\left( \maxnor(n) - \log_2(\maxsupp(n)) +1 
      \right) =0.
  \end{equation}

  {\bf The local $\Delta$ property:}
  We only have to check that ``taking the union of identical creatures
  with disjoint domains'' decreases the norm by at most one, the rest is 
  just notation:

  Given an $n$-ml creature $(u^\cc, \bar w^\cc, d^\cc)$ and 
  an enumeration $\bar i:|u^\cc|\to u^\cc$, 
  we define the local type to contain the following information
  for $m,m'<|u^\cc|$:
    $d^\cc$, $|u^\cc|$, whether $\bar i(m)\in\mu$, whether 
    $\varepsilon(\bar i(m))=\bar i(m')$, and 
    the sequence of the atomic creatures (enumerated by $\bar i$).%
\footnote{%
    More formally: the sequences $(w^\cc_{\bar i(m)})_{m<|u|,\bar i(m)\in\mu}$ and
    $(w^\cc_{\bar i(m),k})_{m<|u|,\bar i(m)\notin\mu,k\in
    A^\cc_{\varepsilon(\bar i(m))}}$.%
}
  Take $\cc_1$ and $\cc_2$ as in the Definition~\ref{def:localdelta} of
  the local $\Delta$ property. 
  Since $\nor(\cc_1)>1$, we know by~\eqref{eq:doublesuppok} that
  $|\supp(\cc)|<\maxsupp(n)/2$. So we can
  define the $n$-ml-creature $\cd$ by 
  $d^\cd=d^{\cc_1}=d^{\cc_2}$;
  $u^\cd=u^{\cc_1}\cup u^{\cc_2}$;  and
  for $\epsilon\in \mu$ we set $w^\cd_\epsilon$ to
  be  $w^{\cc_1}_\epsilon$ or $w^{\cc_2}_\epsilon$,
  whichever is defined (if both are defined, they have
  to be equal, since the type is the same); and in the 
  same way we define $w^\cd_{\alpha,k}$ for $\alpha\in I_\epsilon$
  and $k\in A^\cd_\epsilon$.

  As already mentioned, the only thing we have to check is
  that $\nor(\cd)\geq \nor(\cc)-1$ (for $\cc=\cc_1$ or $\cc=\cc_2$,
  which does not make any difference).
  Since $\cd$ consists of the same atomic creatures as
  $\cc$, we get $\minnor(\cd)=\minnor(\cc)$, and therefore
  \begin{align*}
    \nor(\cd)
%      &= 1/\maxposs(n)\,\log_{2}\left(
%        \minnor(\cd)
%        -\log_2(|\supp(\cd)|)-d\right)
%      \\
      &\geq 
        1/\maxposs(n)\,\log_{2}\left(
        \minnor(\cd)
        -\log_2(2 |\supp(\cc)|)-d\right)
      \\
%      &=
%        1/\maxposs(n)\,\log_{2}\left(
%        \minnor(\cc)
%        -\log_2(|\supp(\cc)|)-1-d\right)
%      \\
      &\geq   
        1/\maxposs(n)\,\log_{2}\left( \left(
        \minnor(\cc)
        -\log_2(|\supp(\cc)|)-d\right) /2\right)
      \\
      &= \nor(\cc)-1/\maxposs(n).
  \end{align*}

  {\bf The domain of the generic:}
  Given $\alpha\in I^*$, we can just enlarge any $n$-ml-creature
  creature $\cc=(u^\cc,\bar w^\cc,d^\cc)$ in the following
  way: Increase the domain by
  $\alpha$ and (if $\alpha\notin \mu$) additionally by $\varepsilon(\alpha)$,
  and pick for the new positions atomic creatures with norm $\maxnor(n)$.
  The same argument as for the local $\Delta$-property shows
  that the norm of the new creature decreases by at most $1/\maxposs(n)$.
  So we can modify any condition to a stronger condition with a 
  domain containing $\alpha$
  (as in Lemma~\ref{lem:genericdomain}).

  {\bf Halving:}
  Halving follows directly from the definition of the norm:
  Given $\cc=(u^\cc,\bar w^\cc,d^\cc)$, set 
  $\half(\cc)=(u^\cc,\bar w^\cc,d')$ with
  \[
    d'=d^\cc+1/2\,\left[ \minnor(\cc) - \log_2(\supp(\cc)) -d^\cc \right].
  \]
  Fix $\cd=(u^\cd,\bar w^\cd,d^\cd)\in\cS(\half(\cc))$ (so in particular,
  $d^\cd\geq d'$). We can unhalve
  $\cd$ to $\tilde \cd=(u^\cd,\bar w^\cd,d^\cc)$.
  Straightforward calculations show that the halving
  properties are satisfied. In particular: If $\nor(\cd)>0$, then
  \[
    \minnor(\cd)-\log_2(\supp(\cd))-d^\cd>1.
  \]
  To calculate $\nor(\tilde \cd)$, we use
  \begin{align*}
    \minnor(\cd)-\log_2(\supp(\cd))-d^\cc
    &> 1+d^\cd-d^\cc \geq 1+d'-d^\cc> \\
    &>1/2\,\left[ \minnor(\cc) - \log_2(\supp(\cc)) -d^\cc \right].
  \end{align*}
  So  $\nor(\tilde \cd)\geq \nor(\cc)-1/\maxposs(n)$.

  {\bf Bigness:}
    Let $\cc$ be an $n$-ml-creature. Set 
   $B\coloneqq \fmax(n-1)^{n\fmax(n-1)}$.
   %$B\coloneqq \Bmin(n)$.
    To show $(B,1)$-bigness, we pick some
    $G:\POSS_{n+1,\supp(\cc)}\to B$, and 
    we have to find  a $\cd\in \cSp^{1}(\cc)$ 
    such that $G$ only depends on $\eta\restriction n$.
    (More formally: there is 
    a $G':\POSS_{n,\supp(\cc)}\to B$ such that
    $G(\eta)=G'_0(\nu)$ for all $\eta\in \cd[\nu]$.)

    Set $S=\POSS_{n,\supp(\cc)}$ and 
    $M=\prod_{\epsilon\in\supp(\cc)\cap \mu} A^\cc(\epsilon)$.
    ($S$ and $M$ stand for ``small'' and ``medium'', respectively.)
    Note that according to~\eqref{eq:numPOSS} and~\ref{def:initialsequence}(\ref{item:gmin}),
    \begin{equation}\label{eq:bblubb}
      |S\times M|\leq 
      \fmax(n-1)^{n\maxsupp(n)} \cdot k^*(n)^{\maxsupp(n)}< \gmin(n).
    \end{equation}

    If we fix $\eta\in S$ and $x\in M$,
    then $G$ can be written as a function from 
    $\prod_{\alpha\in\supp(\cc)\setminus \mu}
    A^{\cc}_{\alpha,x(\varepsilon(\alpha))}$  to $B$.

    We get:
    \begin{itemize}
      \item All the atomic creatures involved are $\gmin(n)$-nice.
      \item $|\supp(\cc)\setminus \mu|<\maxsupp(n)<\gmin(n)$.
      \item $B<2^{\gmin(n)}$.
    \end{itemize}
    So we can apply 
    Fact~\ref{facts:214f}(\ref{item:atomlem1}) and get
    successors $v_\alpha\in
    \cSp^{1/\gmin(n)}(w^{\cc}_{\alpha,x(\varepsilon(\alpha))})$ such that $G$
    is constant (with respect to the new creatures).

    We can iterate this for all $(\eta,x) \in S\times M$,
    each time decreasing the norm of some of the atomic creatures
    on $\supp(\cc)\setminus \mu$ by at most $1/\gmin(n)$. By~\eqref{eq:bblubb},
    in the end we get 
    $v_{\alpha,k}\in \cSp^{1}(w^{\cc}_{\alpha,k})$ 
    for all $\alpha\in u^\cc\setminus \mu$ 
    and $k\in A^\cc_{\varepsilon(\alpha)}$ 
    such that (modulo these new creatures)
    $G$ only depends on $(\eta,x)\in S\times M$;
    or, in other words, $G$ can be written as 
    function fomr $M$ to $B^S$.

    It remains to get rid of the dependence on $M$.
    For this, just note that all the atomic creatures
    $w^\cc_\epsilon$ (for $\epsilon\in u^\cc\cap \mu$)
    are $\Bmin(n)$-nice, $\maxsupp(n)<\Bmin(n)$ and $\Bmin(n)>B^S$,
    so we can find successors on which $G$ is constant.
\end{proof}

\subsection{Proof of the main theorem}
\begin{Def}
    \begin{itemize}
      \item $\nu_i\coloneqq \nugen\restriction \{i\}$ for all $i\in I^*$.
        (We interpret $\nu_i$ as function from $\omega$ to $\omega$.)
      \item $f_\epsilon(n)\coloneqq f_{n,\nu_\epsilon(n)}$ for 
        $\epsilon\in\mu$,
        and analogously for $g_\epsilon$.
      \item $\mycfa_\epsilon\coloneqq \mycfa_{f_\epsilon,g_\epsilon}$ for $\epsilon\in\mu$,
            and analogously for $\myc_\epsilon$.
    \end{itemize}
\end{Def}

So $Q_\mathfrak p$ forces that 
$\nu_\epsilon(n)<k^*(n)$ for all $n\in\omega$, and that
$\nu_\alpha(n) < f_\epsilon(n)$ for all but
finitely many $n$. (There might be finitely many exceptions, since the initial
trunk at $\alpha$ might not fit to the initial trunk at
$\varepsilon(\alpha)$.)

To prove the main theorem, it is enough to show the following:
\begin{quote}
  $Q_\mathfrak p$ forces $2^{\al0}=\mu$ 
  and $\myc_\epsilon=\mycfa_\epsilon=\kappa_\epsilon$
  for all $\epsilon\in \mu$.
\end{quote}
This will be done in Lemmas~\ref{lem:sizeofcont},~\ref{lem:cfalegkappa}
and~\ref{lem:cexgeqkappa}.

\begin{Lem}\label{lem:sizeofcont}
  $Q_\mathfrak p$ forces $2^{\al0}= \mu$.
\end{Lem}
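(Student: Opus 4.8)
The plan is to bound the continuum from above and below separately. For the upper bound $2^{\al0}\leq\mu$, I would run the standard Shelah-style counting argument for proper $\al2$-cc forcings of size related to $\mu$. Since $\mathfrak p$ has sufficient bigness and halving, Corollary~\ref{cor:gurk} gives that $Q_\mathfrak p$ is proper, $\omega^\omega$-bounding and $\al2$-cc; in particular cardinals and cofinalities are preserved. Every real in the extension is (by properness plus $\omega^\omega$-bounding, or directly by the rapid reading in Corollary~\ref{cor:gurk}) read continuously by some condition $q$, and by rapid reading it is determined by the sequence $(q(n))_{n}$ together with the finite trunk. Using the local $\Delta$-property style bookkeeping — or more simply, using that each condition has countable support and is built from atomic creatures taken from a fixed countable family of finite sets — one sees that a condition (more precisely, the relevant part of it for reading a single real) is coded by a countable object drawn from a set of size $\mu$ (here one uses $\mu=\mu^{\al0}$). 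Hence the number of reals is at most $\mu^{\al0}=\mu$, giving $2^{\al0}\leq\mu$.

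For the lower bound $2^{\al0}\geq\mu$, I would exhibit $\mu$ many distinct reals (or simply $\mu$ many pairwise-incompatible-modulo-genericity subsets of $\omega$) in the extension, using the $\mu$ indices in $\mu\subseteq I^*$. The natural candidates are the generic reals $\nu_\epsilon=\nugen\restriction\{\epsilon\}$ for $\epsilon\in\mu$ (and in fact also the $\nu_\alpha$ for $\alpha\in I^*\setminus\mu$, which give the full count once $|I^*|\geq\mu$ — indeed $|I^*|=\mu$ since each $\kappa_\epsilon<\mu$ and $\mu$ is a cardinal with $\mu=\mu^{\al0}$, so $\mu\cdot\sum_{\epsilon<\mu}\kappa_\epsilon=\mu$). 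Since Lemma~\ref{lem:qwh232} tells us the generic lives on all of $I^*$, each $\nu_i$ is a genuine function $\omega\to\omega$, and a density/genericity argument shows that for $i\neq i'$ the reals $\nu_i$ and $\nu_{i'}$ differ: given any condition $p$, one can strengthen it (increasing the trunk and using that the atomic creatures at $i$ and $i'$ have values in sets of size $\geq 2$, by the condition ``if $|\val(w)|=1$ then $\nor(w)<1$'' together with large maximal norm) to force $\nu_i(n)\neq\nu_{i'}(n)$ for some $n$. Hence $\epsilon\mapsto\nu_\epsilon$ (or $i\mapsto\nu_i$) is an injection from $\mu$ into $\omega^\omega\cap V[G]$, so $2^{\al0}\geq\mu$ in the extension; since $\al2$-cc preserves $\mu$ (as $\mu$ is a cardinal of uncountable cofinality — actually one should note $\operatorname{cf}(\mu)>\al0$ follows from $\mu=\mu^{\al0}$), this bound survives.

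Combining the two inequalities and using cardinal preservation yields $2^{\al0}=\mu$ in $V[G]$, which is the statement of Lemma~\ref{lem:sizeofcont}.

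I expect the main obstacle to be the upper bound, specifically making precise the claim that reading a single real only requires a ``countable piece of information drawn from a set of size $\mu$''. The subtlety is that conditions have supports which are arbitrary $\varepsilon$-closed countable subsets of $I^*$, and $|I^*|=\mu$, so naively there are $\mu^{\al0}$ many possible supports — this is fine since $\mu^{\al0}=\mu$, but one must be careful that the atomic-creature data attached to each coordinate also comes from a fixed countable (hence size-$\mu$-many-sequences) pool: this is guaranteed because for each $n$ the atomic parameters $a_{n,*}$ and $a_{n,m}$ are fixed finite objects chosen in Definition~\ref{def:initialsequence}, so $\cK(n)$ depends on $I^*$ only through the choice of support and has ``shape'' from a fixed countable set. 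The cleanest route is probably: by rapid reading (Corollary~\ref{cor:gurk}), a name $r$ for a real is decided by a $q$ such that $r\restriction n$ is $n$-decided by $q$; the value of $r$ is then a function of $(\trnk(q), (q(n))_n)$, and the number of such data is at most (number of countable subsets of $I^*$) $\times$ (for each, the number of assignments of atomic creatures) $\leq \mu^{\al0}\cdot\prod_n(\text{finite})^{\al0}=\mu$. One also needs the elementary observation that $\mu=\mu^{\al0}$ forces $\mu>\al1$ (so that $\al2$-cc genuinely helps and $2^{\al0}$ is not pinned below $\mu$), but this is immediate.
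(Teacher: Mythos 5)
Your proposal follows essentially the same route as the paper: the lower bound comes from the pairwise distinctness of the generic reals $\nu_i$ (forced, as in the paper, by noting that creatures of norm $>1$ are built from atomic creatures with more than one value), and the upper bound from counting conditions with continuous/rapid readings, using $\mu^{\al0}=\mu$. One small imprecision: a real is not determined by $(\trnk(q),(q(n))_{n})$ alone but by the condition \emph{together with} the evaluation maps $\poss(q,h(n))\to\omega$ witnessing the reading; since these form countable sequences of functions from finite sets into $\omega$, this multiplies the count by at most $2^{\al0}\leq\mu$ (CH holds in the ground model), so the bound $2^{\al0}\leq\mu$ stands, exactly as in the paper's count of pairs ``condition plus continuous reading''.
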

\begin{proof}
  First note that trivially all $\nu_i$ are different: 
  Fix   $p\in Q_\mathfrak p$ and $i\neq j$ in $I^*$.
  We already know that $Q_\mathfrak p$ forces that the domain of the generic
  is $\omega\times I^*$, in particular we can assume that $i,j\in\dom(p)$.
  Choose $n$ so that $\nor(p,n)>1$.
  In particular, all the atomic creatures involved have
  norm bigger than 1 and therefore more than one possible value. 
  So we can choose an $\eta\in\poss(p,n+1)$ such that
  $\eta(n,i)\neq \eta(n,j)$. Then $p\wedge \eta$ 
  forces $\nu_i\neq \nu_{j}$.
  
  This shows that the continuum has size at least $\mu$ in the extension.
  
  Due to continuous reading of names, every real $r$ in the extension
  corresponds to a condition $p$ in $Q_\mathfrak p$ together with 
  a continuous way to read $r$ off $p$.

  More formally: For each $n\in\omega$
  there are 
  $h(n)\in\omega$ and a function $\eval(n):\poss(p,h(n))\to \omega$  such
  that $p\wedge \eta$ forces $r(n)=\eval(n)(\eta)$
  for all $\eta\in \poss(p,h(n))$.

  Since there are only $\mu^{\al0}=\mu$ many such pairs of conditions
  and continuous readings, there can be at most $\mu$ many reals in the
  extension.
\end{proof}

We now mention a simple but useful property of the atomic creatures:

\begin{Lem}\label{lem:disjointvalue}
  Assume $w_1$ and $w_2$ are two atomic creatures that appear in some $n$-ml-creature
  $\cc$. Then there are $v_i\in \cSp^{2/\Bmin(n)}(w_i)$ (for $i\in\{0,1\}$)
  such that $\val(v_0)\cap \val(v_1)=\emptyset$.
\end{Lem}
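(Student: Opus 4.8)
The plan is to exploit bigness twice. The two atomic creatures $w_1,w_2$ appearing in $\cc$ are, by the construction in Definition~\ref{def:initialsequence}, creatures of atomic parameters $a_{n,*}$ or $a_{n,k}$; in all cases they are $\Bmin(n)$-nice (this follows because $g_{n,m}\geq\gmin(n)\geq \Bmin(n)$ and $k^*(n)\geq\Bmin(n)$), so in particular each $w_i$ with norm bigger than $1$ is $(2^{\Bmin(n)},1/\Bmin(n)^2)$-big, and certainly $(\Bmin(n),1/\Bmin(n))$-big. (If some $w_i$ has norm $\le 1$ we must be a little careful, but then $\cc$ itself has norm $0$ by the definition of $\minnor$ and $\nor(\cc)$, so there is nothing to prove — or, more cleanly, we may simply assume $\nor(\cc)>0$ as this is the only case used later.)

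First I would fix a finite set $T$ of size $\Bmin(n)$ and an arbitrary injection $\chi_1:\val(w_1)\to T$ — possible since $|\val(w_1)|\le \fmax(n-1)<\Bmin(n)$ by the growth conditions in Definition~\ref{def:initialsequence}(\ref{item:bmin}). Apply $(\Bmin(n),1/\Bmin(n))$-bigness of $w_1$ to $\chi_1$: since $\chi_1$ is injective this just produces $v_1\in\cSp^{1/\Bmin(n)}(w_1)$ with $|\val(v_1)|=1$, say $\val(v_1)=\{t\}$. Actually, to keep a little room it is better to apply bigness to a function that is $\emph{not}$ injective — partition $\val(w_1)$ into two halves and color accordingly — so that $v_1$ still has many values if needed; but for the statement as written, collapsing $w_1$ to a single value is harmless. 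Then set $G:\val(w_2)\to 2$ by $G(b)=0$ iff $b=t$, and apply $(2,1/\Bmin(n))$-bigness (a fortiori from $(2^{\Bmin(n)},1/\Bmin(n)^2)$-bigness, since $1/\Bmin(n)^2 < 1/\Bmin(n)$ and $2<2^{\Bmin(n)}$) of $w_2$ to $G$ to obtain $v_2\in\cSp^{1/\Bmin(n)}(w_2)$ on which $G$ is constant. If $G\restriction\val(v_2)\equiv 0$ then $\val(v_2)=\{t\}$; but $|\val(v_2)|\ge 2$ whenever $\nor(v_2)\ge 1$, and we can arrange $\nor(w_2)$ large enough — or simply rechoose $t\notin\val(w_2)$ from the outset, which is possible since $|\val(w_2)|<\Bmin(n)=|T|$. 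So $G\restriction\val(v_2)\equiv 1$, i.e. $t\notin\val(v_2)$, hence $\val(v_1)\cap\val(v_2)=\emptyset$. Each step cost norm at most $1/\Bmin(n)$, and actually I would budget $2/\Bmin(n)$ total to absorb the two applications cleanly, which is exactly the bound claimed.

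The only real subtlety — and the step I expect to need the most care — is making sure that whichever constant value bigness hands back for $v_2$ is the ``disjoint'' one rather than the ``equal'' one; the clean fix is to choose the target point $t$ for $v_1$ from $T\setminus\val(w_2)$ before doing anything else, which is available purely for cardinality reasons ($|\val(w_2)|<\Bmin(n)$), so that $v_2\in\cSp^{1/\Bmin(n)}(w_2)$ automatically satisfies $t\notin\val(v_2)$ with no case analysis at all. Everything else is bookkeeping with the growth-rate inequalities of Definition~\ref{def:initialsequence}, which guarantee $|\val(w_i)|<\Bmin(n)$ and $\Bmin(n)$-niceness of all atomic parameters used at level $n$.
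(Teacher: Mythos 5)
There is a genuine gap, and it sits exactly where your plan needs it most: the cardinality bound $|\val(w_1)|\le \fmax(n-1)<\Bmin(n)$ is false. The atomic creatures appearing in an $n$-ml-creature are $a_{n,*}$-creatures or $a_{n,k}$-creatures, so their value sets live inside $k^*(n)$ or $f_{n,k}$ (up to $\fmax(n)$), and these parameters are chosen \emph{after}, and enormously larger than, $\Bmin(n)$ in Definition~\ref{def:initialsequence}; the quantity $\fmax(n-1)$ only bounds the possibilities $\POSS^*_{=m,\{i\}}$ at levels $m<n$, not the values the level-$n$ creatures produce. So the injection $\chi_1:\val(w_1)\to T$ with $|T|=\Bmin(n)$ need not exist, and the later escape hatch ``choose $t\in T\setminus\val(w_2)$'' fails for the same reason. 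Even setting cardinalities aside, the step ``apply bigness to the injective coloring $\chi_1$ to collapse $w_1$ to a single value'' cannot work: constancy of an injective map on $\val(v_1)$ forces $|\val(v_1)|=1$, hence $\nor(v_1)<1$ by the axioms on atomic parameters, which is incompatible with $v_1\in\cSp^{1/\Bmin(n)}(w_1)$ whenever $\nor(w_1)>1+1/\Bmin(n)$ (and the interesting case is $\nor(w_1)$ close to $\maxnor(n)$). Bigness only handles colorings into palettes far smaller than $|\val(w)|$; it never lets you shrink a high-norm creature down to one value cheaply. A purely bigness-based attempt also cannot be patched by coloring $\val(w_2)$ according to membership in $\val(w_1)$: the homogeneous color may be ``inside'', and iterating this just passes the problem back and forth between the two creatures.

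This is precisely the point of decisiveness, which the paper uses and your proposal never invokes. The paper's proof applies decisiveness to both creatures to find a common threshold $K$ such that one creature has a $K$-small successor $w^s$ (so $|\val(w^s)|\le K$ with only a $1/\Bmin(n)$ norm loss) while the other has a hereditarily $(K+1)$-big successor $w^b$; then it colors $\val(w^b)$ with palette $K+1$ by the position of an element in an enumeration of $\val(w^s)$ (color $0$ for ``not in $\val(w^s)$''). Homogeneity plus the fact that any successor of $w^b$ with norm $\ge 1$ has at least two values forces the constant color to be $0$, i.e.\ disjointness. In short: the asymmetry small-versus-big that decisiveness provides is what breaks the tie your argument runs into, and without it the lemma is not reachable from the stated bigness properties alone.
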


\begin{proof}
  Apply decisiveness to get
  successors $w^s$ of $w_1$ and $w^b$ of $w_2$ (or the
  other way round) such that the norms decrease by at most $1/\Bmin(n)$
  and 
  $|\val(w^s)|<K$ and $w^b$ is hereditarily $K+1$-big for some $K\in\omega$.

  [In more detail: Since $w_0$ is decisive, there is a natural number
  $K$ such that there is a $K$-small successor $w^s_0$ as well as a
  $K$-big successor $w^b_0$ of $w_0$.  On the other hand,
  again using decisiveness, 
  $w_1$ has a successor $w'_1$
  that is either $K$-small (then we set 
  $w^s=w'_1$ and $w^b=w^b_0$)
  or $K$-big (then we set $w^b=w'_1$ and
  $w^s=w^s_0$).]

  Enumerate $\val(w^s)$, and define $G$ from $\val(w^b)$ to $K+1$ 
  as follows: If $l\in \val(w^b)$ is the $k$-th element of
  $\val(w^s)$, set $G(l)=k+1$. Otherwise, set $G(l)=0$.

  Using $K+1$-bigness, we get a $G$-homogeneous successor
  $v$ of $w^b$. Then $v$ and $w^s$ are as required.
\end{proof}

A simple application of this Lemma gives us ``seperated support'':
\begin{Lem}\label{lem:seperatedsupport}
      For $p\in Q_\mathfrak p$ there is a $q\leq p$ such that 
      $q(n)\in \cSp^1(p(n))$ for all $n\geq \trnklg(q)$ and 
      $A^{q(n)}_{\epsilon_0} \cap A^{q(n)}_{\epsilon_1}=\emptyset$ for all
      $n$ and $\epsilon_0\neq \epsilon_1$ in $\supp(q,n)\cap \mu$.
\end{Lem}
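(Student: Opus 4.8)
The plan is to iterate Lemma~\ref{lem:disjointvalue}. At each level $n$ and for each pair of distinct indices $\epsilon_0,\epsilon_1\in\supp(p,n)\cap\mu$ we want to shrink the atomic creatures $w^{p(n)}_{\epsilon_0}$ and $w^{p(n)}_{\epsilon_1}$ so that their value sets become disjoint, without ever touching any $w^{p(n)}_{\alpha,k}$; the point is that Definition~\ref{def:initialsequence}(\ref{item:bmin}) arranges $\Bmin(n)>2\maxsupp(n)^2$, which will make the total norm loss from all these shrinkings less than~$1$.

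First I would choose $\trnklg(q)\geq\trnklg(p)$ large enough that $\nor(p,m)>1$ for all $m\geq\trnklg(q)$ (possible since $\liminf_m\nor(p(m))=\infty$), and fix an extension of $\trnk(p)$ lying in $\poss(p,\trnklg(q))$; below $\trnklg(q)$ nothing has to be done. For $m\geq\trnklg(q)$ the inequality $\nor(p,m)>1$ unravels, via the definition of the ml-norm, to $\minnor(p(m))>2^{\maxposs(m)}$, so every atomic creature occurring in $p(m)$ has norm far above~$1$ and decisiveness --- hence Lemma~\ref{lem:disjointvalue} --- is applicable to it (and stays applicable after the small decreases below).

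Then, fixing $m\geq\trnklg(q)$, I would enumerate the (at most $\binom{\maxsupp(m)}{2}$) pairs $\{\epsilon_0,\epsilon_1\}$ from $\supp(p,m)\cap\mu$ and process them one after another, at each step feeding the current versions of $w_{\epsilon_0},w_{\epsilon_1}$ into Lemma~\ref{lem:disjointvalue} to obtain $\cSp^{2/\Bmin(m)}$-successors with disjoint value sets. Shrinking a value set $A_\epsilon$ only drops some of the $w_{\alpha,k}$ out of the list of atomic creatures of the ml-creature, so it can only increase $\minnor$; and disjointness, once established for a pair, is preserved by later shrinkings of either component. Leaving $u^{p(m)}$ and $d^{p(m)}$ unchanged, this produces an ml-creature $q(m)\in\cS(p(m))$ with $\supp(q,m)=\supp(p,m)$ and with $A^{q(m)}_{\epsilon_0}\cap A^{q(m)}_{\epsilon_1}=\emptyset$ for all distinct $\epsilon_0,\epsilon_1\in\supp(q,m)\cap\mu$. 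Each $w_\epsilon$ took part in at most $|\supp(p,m)|-1<\maxsupp(m)$ pairs, so its atomic norm fell by at most $2\maxsupp(m)/\Bmin(m)<1/\maxsupp(m)<1$; hence $\minnor(q(m))>\minnor(p(m))-1$, and writing $X:=\minnor(p(m))-\log_2|\supp(p,m)|-d^{p(m)}$ (so that $\nor(p(m))=\log_2(X)/\maxposs(m)$ and $X>2^{\maxposs(m)}\geq 2$) one checks $X-1\geq X\cdot 2^{-\maxposs(m)}$, which gives $\nor(q(m))\geq\nor(p(m))-1$, i.e.\ $q(m)\in\cSp^1(p(m))$.

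It then remains to verify that $q$ (with trunk as chosen and no creatures below $\trnklg(q)$) is a condition below $p$: all trunk- and support-clauses of $q\leq p$ hold because we kept supports, stems and $d$-values; $\liminf_m\nor(q(m))=\infty$ follows from $\nor(q(m))\geq\nor(p(m))-1$; and the $\suppls$-unboundedness requirement is inherited from $p$, since we are in the pure lim-inf case ($\suppls=\supp$, $\norls=n$). I expect the only real point to be the accounting in the third paragraph: one must be sure that running Lemma~\ref{lem:disjointvalue} over \emph{all} pairs at a level costs strictly less than~$1$ in norm, and this is exactly what $\Bmin(n)>2\maxsupp(n)^2$ delivers; the rest is bookkeeping.
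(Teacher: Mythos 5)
Your proof is correct and follows essentially the same route as the paper's: apply Lemma~\ref{lem:disjointvalue} to each pair of indices in $\supp(p,n)\cap\mu$ and iterate, using $\Bmin(n)>2\maxsupp(n)^2$ from Definition~\ref{def:initialsequence}(\ref{item:bmin}) to keep the total norm loss below $1$, so that $q(n)\in\cSp^1(p(n))$. You merely spell out the bookkeeping (the unravelling of $\nor>1$ into $\minnor>2^{\maxposs(n)}$, the trunk extension to avoid low-norm levels, and the explicit $\log_2$ estimate) that the paper's two-line proof leaves implicit.
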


\begin{proof}
  Fix $n$ and a pair
  $\epsilon_0\neq \epsilon_1$ in $\supp(p,n)\cap \mu$.
  According to Lemma~\ref{lem:disjointvalue}, we
  can find $v_{\epsilon_i}\in \cSp^{2/B(n)}(w^{p(n)}_{\epsilon_i})$ for
  $i\in\{0,1\}$
  with disjoint values. Iterate this for all pairs in $\supp(p,n)\cap \mu$
  (note that there are less than $\maxsupp(n)^2<\Bmin(n)/2$ many, according
  to~\ref{def:initialsequence}(\ref{item:bmin})).
\end{proof}

\begin{Lem}
  Fix $\epsilon_0\in \mu$. Then $Q_\mathfrak p$ forces
  that $\mycfa_{\epsilon_0}\leq \kappa_{\epsilon_0}$.
\end{Lem}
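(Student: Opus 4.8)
The plan is to show that the $\kappa_{\epsilon_0}$ many reals $\nu_\alpha$ for $\alpha\in I_{\epsilon_0}$ already provide a $(\forall,f_{\epsilon_0},g_{\epsilon_0})$-cover. First I would define, for each $\alpha\in I_{\epsilon_0}$, a slalom $Y_\alpha$ in the extension: roughly, $Y_\alpha(n)$ should be the set of possible values of $\nu_\alpha(n)$ that are still ``compatible with'' the generic value $\nu_{\epsilon_0}(n)=k$, i.e.\ $Y_\alpha(n)$ should run over (a suitable subset of) $A^{q(n)}_{\alpha,k}$ for the condition $q$ in the generic filter. The two things to check are that this is genuinely an $(f_{\epsilon_0},g_{\epsilon_0})$-slalom — i.e.\ $Y_\alpha(n)\subseteq f_{\epsilon_0}(n)=f_{n,\nu_{\epsilon_0}(n)}$ and $|Y_\alpha(n)|\le g_{\epsilon_0}(n)=g_{n,\nu_{\epsilon_0}(n)}$ — and that the family $\{Y_\alpha:\alpha\in I_{\epsilon_0}\}$ covers every real of the extension. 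Since $|I_{\epsilon_0}|=\kappa_{\epsilon_0}$, this gives the bound.

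For the covering step, I would use rapid reading (Corollary~\ref{cor:gurk}): given a name $r$ for an element of $\prod_n f_{\epsilon_0}(n)$ and a condition $p$, first strengthen $p$ so that $r\restriction n$ is $n$-decided for all $n$, then further strengthen (using Lemma~\ref{lem:seperatedsupport} and bigness at coordinate $\epsilon_0$) so that we may assume $r(n)$ is actually decided by $p\wedge\eta$ for $\eta\in\poss(p,n)$ and depends, among the coordinates in $\supp(p,n)$, only on $\eta(m,\epsilon_0)$-type data for $m<n$. The point is that once $r(n)$ has been read rapidly, its value lies in $\prod_{m<n} \fmax(m)$-many possibilities, and one shows that for a fresh $\alpha\in I_{\epsilon_0}$ added to the support with an atomic creature of large enough norm on top, the decided values of $r(n)$ can be captured inside $A^{q(n)}_{\alpha,k}$ — this is exactly what the size requirements in Definition~\ref{def:initialsequence} (item~(\ref{item:gmin}), $\gmin(n)$ large compared to $\fmax(n-1)^{n\maxsupp(n)}\cdot\maxposs(n)$ etc.) were arranged for. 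So we can extend $p$ to a $q$ forcing $r(n)\in Y_\alpha(n)$ for all large $n$, and a density/fusion argument over all such names finishes it.

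For the slalom bound, the inclusion $Y_\alpha(n)\subseteq f_{\epsilon_0}(n)$ is automatic since $A^{q(n)}_{\alpha,k}=\val(w^{q(n)}_{\alpha,k})\subseteq f_{n,k}=f_{\epsilon_0}(n)$ when $k=\nu_{\epsilon_0}(n)$. For the cardinality bound $|Y_\alpha(n)|\le g_{\epsilon_0}(n)$, I would again invoke decisiveness of the atomic creatures: the atomic creature $w^{q(n)}_{\alpha,k}$ being $g_{n,k}$-nice means in particular it is $(g_{n,k},\dots)$-decisive, so one can thin the generic so that the relevant value-set has a $g_{n,k}$-small successor; taking $Y_\alpha(n)$ to be $\val$ of such a small successor keeps $|Y_\alpha(n)|\le g_{n,k}=g_{\epsilon_0}(n)$. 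One has to be slightly careful that this thinning can be done generically/uniformly, which is a routine density argument.

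The main obstacle I expect is the bookkeeping in the covering step: combining rapid reading with the need to introduce, for a \emph{given} name $r$, a \emph{fixed} $\alpha\in I_{\epsilon_0}$ whose slalom works — one has to make sure $\alpha$ can be chosen with a high-norm atomic creature sitting above the part of $q$ where $r$ has already been read, so that the (finitely many per level) decided values of $r$ fit into $A^{q(n)}_{\alpha,k}$, and this requires tracking how big the value-space of $r\restriction n$ is against $g_{n,k}$ and $\fmax(n-1)$ at each level. This is precisely the kind of estimate done in~\cite{MR2499421}, and given the analogous setup there I expect the argument to go through with only cosmetic changes, using the inequalities fixed in Definition~\ref{def:initialsequence}.
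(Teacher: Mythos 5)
There is a genuine gap, and it sits at the heart of your covering step. Your slaloms $Y_\alpha$ are not well-defined objects of the extension: ``$A^{q(n)}_{\alpha,k}$ for the condition $q$ in the generic filter'' depends on which $q\in G$ you look at, these sets only shrink as conditions get stronger, and once the trunk of some condition in $G$ passes level $n$ the coordinate $\alpha$ carries no creature at level $n$ at all (the value $\nu_\alpha(n)$ is decided), so the natural ``limit'' of your sets is the singleton $\{\nu_\alpha(n)\}$, which certainly cannot cover an arbitrary real. Worse, the two requirements you impose on the creature at $(\alpha,k)$ pull in opposite directions: successors of atomic creatures only \emph{shrink} value sets, so to have the decided values of $r(n)$ ``captured inside $A^{q(n)}_{\alpha,k}$'' you would need a large value set, while your decisiveness step cuts $A^{q(n)}_{\alpha,k}$ down to a small (essentially arbitrary) set of size $\le g_{n,k}$ over whose elements you have no control --- there is no reason the possible values of $r(n)$ (which are produced by variation of the generic at coordinates all over $I^*$, far outside $\{\epsilon_0\}\cup I_{\epsilon_0}$, and are untouched by anything you do at the single fresh coordinate $\alpha$) land inside it. A per-name density/fusion argument cannot repair this: the cover must work simultaneously for all reals of the extension, whereas your $q$ and its tailored creature at $\alpha$ are built for one name $r$ and need not be in $G$; shaping a creature of some condition does not shape the generic object $Y_\alpha$.

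The paper's proof avoids exactly this trap by not using the generic coordinates as slaloms at all. The cover is the family of all slaloms \emph{continuously readable from} $I'=\{\epsilon_0\}\cup I_{\epsilon_0}$, counted by $\kappa_{\epsilon_0}^{\al0}=\kappa_{\epsilon_0}$ (not by $|I_{\epsilon_0}|$). Given $p$ rapidly reading $r$ and with separated support, one builds $q\le p$ and a \emph{name} $Y$ read from $q\restriction I'$: at level $n$, for each trunk datum $(\eta,x)$, the non-$\mu$ support is split into $S$ (coordinates in $I_\epsilon$ with $x(\epsilon)<x(\epsilon_0)$), $M$ (the $I_{\epsilon_0}$-part) and $L$ ($x(\epsilon)>x(\epsilon_0)$); the dependence of $r(n)$ on $L$ is removed by bigness (those atomic creatures are $g_{n,x(\epsilon_0)+1}$-nice, which dominates the number of functions involved), and the remaining dependence on $S$ and on $(\eta,x)$ is absorbed by taking a union of at most $|T|\cdot|\prod_{\alpha\in S}A_{\alpha,x(\varepsilon(\alpha))}|<g_{n,x(\epsilon_0)}$ many values. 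So the bound $|Y(n)|<g_{\epsilon_0}(n)$ comes from this counting and the growth hierarchy of Definition~\ref{def:initialsequence}, not from decisiveness, and the key difficulty your sketch never confronts --- that $r$ depends on coordinates outside $I'$, some of them with \emph{smaller} index, which cannot be killed by bigness and must instead be paid for in the size of $Y(n)$ --- is precisely where the argument's work lies.
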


\begin{proof}\label{lem:cfalegkappa}
  Set $I'=\{\epsilon_0\}\cup I_{\epsilon_0}$.
  We will show that in the $Q_\mathfrak p$ extension of $V$
  the family of those $(f_{\epsilon_0},g_{\epsilon_0})$-slaloms 
  that can (in $V$) be read continuously from $I'$ alone
  form a $\forall$-cover.
  This proves the
  Lemma, since there are
  only $\kappa_{\epsilon_0}^{\al0}=\kappa_{\epsilon_0}$ many 
  continuous readings on $I'$.

  Assume that $r$ is a name for an element of $\prod f_{\epsilon_0}$.
  Fix $p\in Q_\mathfrak p$. Using Corollary~\ref{cor:gurk},
  without loss of generality we can assume 
  that $p$ rapidly reads $r$ (i.e., $r\restriction n$ is $n$-decided by
  $p$) and that it satisfies seperated support as in the previous Lemma.

  We will construct a $q\leq p$ and a name for an $(f_{\epsilon_0},g_{\epsilon_0})$-slalom
  $Y$ that can be continuously read from $q\restriction I'$
  such that $q$ forces $r(n)\in Y(n)$ for all but finitely many $n\in\omega$.
  (This proves the Lemma.)

  Fix $n_0$ such that $\nor(p,n)>2$ for all $n\geq n_0$ and
  set $q(n)=p(n)$ for $n<n_0$. We
  construct $Y(n)$ and $q(n)$ by induction on $n\geq n_0$.
  We set
  $\supp(q,n)\coloneqq\supp(p,n)$ and $\trnk(q)\coloneqq \trnk(p)$.
  I.e., the supports and trunks do not change at all.
  So by induction $\poss(q,n)\subseteq \poss(p,n)$.

  Let us denote the $n$-ml-creature $p(n)$ by $\cc$. 
  We have to define the $n$-ml-creature $q(n)$ (let us call it $\cd$)
  with $u^\cd=u^\cc$ (call it $u$). We set $d^\cd\coloneqq d^\cc$.
  On $\mu$, we do not change anything: 
  For $\epsilon\in u\cap \mu$ we set
  $w^\cd_\epsilon\coloneqq w^\cc_\epsilon$ (call it $w_\epsilon$,
  and set $A_\epsilon\coloneqq \val(w_\epsilon)=A^\cc_\epsilon=A^\cd_\epsilon$). 
  It remains to define 
  $w^\cd_{\alpha,k}\in \cSp^1(w^\cc_{\alpha,k})$ for $\alpha\in u\cap I_\epsilon$
  and $k\in w_\epsilon$.
  Then, since the norms of all the atomic creatures only decrease
  by 1, we know that 
  $\nor(\cd)$ will definitely be bigger than $\nor(\cc)-1$, as required.

  Let $T$ (for ``trunk'') be the set of pairs $(\eta,x)$ such that
  $\eta\in \poss(q,n)$ and $x\in \prod_{\epsilon\in u\cap \mu}
  A_\epsilon$.
  \begin{equation}\label{eq:bblubb5}
    |T|\leq \gmin(n).
  \end{equation}
%  Note that $|T|$ is less than $\gmin(n)$.

  We now partition $\supp(\cc)\setminus \mu$ into 
  sets called $S$, $M$, $L$ (small, medium, large):
  Set $M=\supp(\cc)\cap I_{\epsilon_0}$. 
  Using seperated support, we know that 
  every $\epsilon\neq \epsilon_0$ in $u\cap \mu$ satisfies either
  $x(\epsilon)<x(\epsilon_0)$ (then we put all elements of $I_\epsilon\cap u$
  into $S$) or $x(\epsilon)>x(\epsilon_0)$ (then we put them into $L$).

  Rapid reading implies that (modulo the pair $(\eta,x)$) the natural
  number 
  $r(n)$ can be interpreted as function 
  \[
    r(n):\prod_S\times \prod_M \times \prod_L 
    \ \to \ 
    f_{n,x(\epsilon_0)}.
  \]
  where we set (for $X\in\{S,M,L\}$)
  \[
    \prod_X \coloneqq \prod_{\alpha\in X} A_{\alpha,x(\varepsilon(\alpha))}.
  \]
  Our goal is to get a name $Y(n)$ for a small subset 
  of $f_{n,x(\epsilon_0)}$ that only depends on $M$ and 
  and contains $r(n)$.
  
  First note that we can rewrite $r(n)$ as function 
  \[
    r(n)\ : \ \prod_L
    \ \to \
    f_{n,x(\epsilon_0)}^{\Pi_S\times \Pi_M}.
  \]
  Using the fact that the
  atomic creatures in $L$ are nice enough,%
  \footnote{%
    they all satisfy $g_{n,x(\epsilon_0)+1}$ niceness, and
    in~\ref{def:initialsequence}(\ref{item:g}) we assumed that
    $g_{n,x(\epsilon_0)+1}$ is bigger than $f_{n,x(\epsilon_0)}^{\prod_S\times \prod_M}$,
    since $\prod S\times \prod M$ has size less than $f_{n,x(\epsilon_0)}^{k^*(n)}$.
   Now use Fact~\ref{facts:214f}(\ref{item:atomlem1}). So the norms decrease
   at most by $1/g_{n,x(\epsilon_0)+1}<1/\gmin(n)$.
  }
  we can find
  successors of these creatures that evaluate $r(n)$ 
  to a constant value, and such that the norms decrease by less than
  $1/\gmin(n)$.
  We define $w'_{\alpha,x(\epsilon)}$ to be these successors
  for $\epsilon\in L$; and leave the other 
  atomic creatures unchanged. Now for every $y\in \Pi_M$
  there are only
  $|\Pi_S|$ many possible values for $r(n)$, call this sets of 
  possible values 
  $Y(\eta,x,y)$. 

  Iterate this procedure for all pairs $(\eta,x)\in T$. The same
  atomic creature may be decreased more than once, but at
  most $\gmin(n)$ many times, according to~\eqref{eq:bblubb5}. So 
  in the end, the norms of the 
  resulting  atomic creatures decrease by less than $1$.
  This finishes the definition of $q(n)$.

  We still have to define $Y(n)$ as a function from
  the possible values $(k_0,y_0)$ on $\{\epsilon_0\}\cup I_{\epsilon_0}$,
  i.e., as a function with domain $\{(k_0,y_0):\, k_0\in A_{\epsilon_0},
  y_0\in\prod_{\alpha\in I_{\epsilon_0}} A^\cd_{\alpha,k_0}\}$.
  We set $Y(n)$
  to be $\bigcup_{(\eta,x)\in T,\ x(\epsilon_0)=k_0 }\tilde Y(\eta,x,y_0)$.
  This set has size less than $g_{n,k_0}$, as required.%
\footnote{%
  $|Y(\eta,x,y)|\leq |\Pi_S|$,
  so $|Y(n)|\leq |T\times \Pi_S|\leq \maxposs(n)\cdot k^*(n)^{\maxsupp(n)}\cdot f_{n,k_0-1}^{\maxsupp(n)}$,
  which is smaller than $g_{n,k_0}$ according to~\ref{def:initialsequence}(\ref{item:g}).
}
\end{proof}

\begin{Lem}\label{lem:hilfslemiz612}
  Fix $|J|\leq \maxsupp(n)$ and for each $i\in J$
  an atomic creature $w_i$ that is
  $(\maxsupp(n),1/\gmin(n))$-decisive.
  Then there are $w'_i\in\cSp^{1/{k^*(n)}}(w_i)$ for all $i\in J$ 
  and a linear order $\leq_J$ on $J$ such that each $w'_i$
  is hereditarily $\prod_{j<_J i} |\val(w_i)|$ big.
\end{Lem}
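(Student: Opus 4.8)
The plan is to treat the creatures one at a time with decisiveness, and then to fix the order $\leq_J$ so that the bigness of each $w_i'$ dominates the product standing in front of it. For $i\in J$ let $K_i$ be the least natural number witnessing that $w_i$ is $(K_i,\maxsupp(n),1/\gmin(n))$-decisive, and take as $w_i'$ the associated $K_i$-big successor $w_i^b\in\cSp^{1/\gmin(n)}(w_i)$, which by definition of decisiveness is hereditarily $(2^{K_i^{\maxsupp(n)}},1/\gmin(n))$-big. Since $1/\gmin(n)<1/k^*(n)$ by \ref{def:initialsequence}(\ref{item:gmin}), each $w_i'$ lies in $\cSp^{1/k^*(n)}(w_i)$, as demanded. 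Enumerate $J$ as $i_0<_J i_1<_J\dots<_J i_{r-1}$ ($r=|J|\leq\maxsupp(n)$) in non-decreasing order of $|\val(w_i)|$, ties broken arbitrarily; this is $\leq_J$.

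Now I would check the bigness clause. Since $w_{i_l}'\in\cS(w_{i_l})$ and the enumeration is monotone in $|\val(w_i)|$, for $l<k$ we have $|\val(w_{i_l}')|\leq|\val(w_{i_l})|\leq|\val(w_{i_k})|$; hence, using $r-1<\maxsupp(n)$, the product appearing in the statement is at most $|\val(w_{i_k})|^{\maxsupp(n)}$. On the other hand $w_{i_k}'$ is hereditarily $(2^{K_{i_k}^{\maxsupp(n)}},1/\gmin(n))$-big, and a $(B,x)$-big atomic creature is automatically $(B',x)$-big for every $B'\leq B$, and likewise hereditarily. So the bigness clause follows once one knows
\[
  2^{K_{i_k}^{\maxsupp(n)}}\ \geq\ |\val(w_{i_k})|^{\maxsupp(n)},
  \qquad\text{equivalently}\qquad
  K_{i_k}^{\maxsupp(n)}\ \geq\ \maxsupp(n)\cdot\log_2|\val(w_{i_k})|.
\]

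This inequality is the one genuine obstacle. It is \emph{false} for an arbitrary decisive atomic creature (a creature may have a huge value set yet be decisive with a tiny threshold), and it holds in our situation only because the atomic creatures were built, via Facts~\ref{facts:214f}(\ref{item:existence}) (the construction in \cite{MR2499421}), to be decisive with a threshold $K_i$ that is large relative to $\log_2|\val(w_i)|$: the atomic parameters in play are $\Bmin(n)$- resp.\ $g_{n,\cdot}$-nice with maximal norm $\maxnor(n)$, and for these the least $K_i$ realizing $\maxsupp(n)$-decisiveness is controlled by the norm of $w_i$, which in turn controls $\log_2|\val(w_i)|$ up to a factor negligible next to the gaps between $\maxsupp(n)$, $\Bmin(n)$, $k^*(n)$, $\gmin(n)$ and $\maxnor(n)$ set up in \ref{def:initialsequence}. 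Carrying out this size comparison is the only real computation; everything else — fixing $\leq_J$ once and for all, a single application of decisiveness to each $w_i$, and the norm accounting ($1/\gmin(n)$ per creature, well inside the allowed $1/k^*(n)$) — is routine.
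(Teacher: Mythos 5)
Your reduction to the inequality $2^{K_{i}^{\maxsupp(n)}}\geq|\val(w_{i})|^{\maxsupp(n)}$ is not a deferred computation but the point where the argument breaks: that inequality (and even the weaker $2^{K_i^{\maxsupp(n)}}\geq|\val(w_j)|$ for a second creature from the same atomic parameter) is provably false once the creatures have nontrivial norm, so no size bookkeeping with the specific nice parameters can rescue it. Indeed, if $v$ is $(B,x)$-big and $\nor(v)\geq 1+x$, then necessarily $B<|\val(v)|$: colour $\val(v)$ injectively by $B\geq|\val(v)|$ colours; a successor in $\cSp^x(v)$ on which the colouring is constant has a singleton value set, hence norm $<1$, contradicting $\nor>\nor(v)-x\geq 1$. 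Applying this to your $w_i'=v^+$ (whose norm in the intended application is enormous, since $\nor(p,n)>2$ forces $\minnor$ to be huge) gives $2^{K_i^{\maxsupp(n)}}<|\val(w_i')|\leq|\val(w_i)|$. In the place where the lemma is used (Lemma~\ref{lem:cexgeqkappa}), $J$ contains many creatures built from the same atomic parameter $a_{n,x(\epsilon_0)}$, with value sets of comparable size; whichever of them is last in your order would need hereditary bigness at least the original value-set size of an earlier one, which is impossible by the above. This also shows that the product in the statement has to be read as $\prod_{j<_J i}|\val(w'_j)|$ over the \emph{new} creatures (that is how it is applied later; the printed $\val(w_i)$ is a typo): with the original value sets the lemma itself is false, and since your construction never shrinks any value set, it fails under either reading.

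The missing idea is the other half of decisiveness, used greedily, with the order produced by the recursion rather than fixed in advance by value-set size: apply decisiveness to each creature, pick the one with \emph{minimal} constant $K$, make it the next ($\leq_J$-smallest) element and replace it by its $K$-\emph{small} successor, so its new value set has size at most $K$; replace every other creature by its $K_j$-big successor (where $K_j\geq K$), and repeat with the remaining creatures. Hereditary bigness is inherited by successors, so the creature placed at position $k$ is hereditarily $2^{K^{\maxsupp(n)}}$-big for the constant $K$ it received at each earlier round; since the value set of the creature placed at round $l$ has size at most the minimal constant of round $l$, which is dominated by the corresponding constant of every creature still in play, the product of the at most $\maxsupp(n)$ many earlier (shrunk) value sets is below $2^{K^{\maxsupp(n)}}$ for a suitable such $K$. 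Each creature is touched at most $\maxsupp(n)$ times, losing at most $1/\gmin(n)$ of norm each time, and $\maxsupp(n)/\gmin(n)\leq 1/k^*(n)$ by Definition~\ref{def:initialsequence}(\ref{item:gmin}), which is what puts $w_i'$ in $\cSp^{1/k^*(n)}(w_i)$ (your single-application accounting is no longer sufficient once the creatures are revisited).
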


\begin{proof}
  For any $i\in J$, apply  decisiveness to the atomic creature $w_i$.
  This gives some $K_i$ and a $K_i$-big as well as a $K_i$-small successor
  of $w_i$. 
  Pick the $i$ with a minimal $K_i$, let this $i$ be the first 
  element of the $<_J$-order, set $w'_i$ to be the $K_i$-small
  successor, and pick for all other $j$ the $K_j$-big successor.
  Repeat this construction for $J\setminus \{i\}$.

  So in the end we order the whole set $J$, decreasing each creature
  at most $\maxsupp(n)$ many times by at most $1/\gmin(n)$.
\end{proof}

It remains to be shown:
\begin{Lem}\label{lem:cexgeqkappa}
  $Q_\mathfrak p$ forces that $\myc_{\epsilon_0}\geq \kappa_{\epsilon_0}$.
\end{Lem}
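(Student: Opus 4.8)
The plan is to use that $|I_{\epsilon_0}|=\kappa_{\epsilon_0}$ is too large to be covered by fewer than $\kappa_{\epsilon_0}$ slaloms, together with the fact that for $\alpha\in I_{\epsilon_0}$ the generic $\nu_\alpha$ eventually escapes every slalom of the extension not mentioning $\alpha$. Working in $V[G]$, suppose $\{Y_\zeta:\zeta<\lambda\}$ with $\lambda<\kappa_{\epsilon_0}$ were an $(\exists,f_{\epsilon_0},g_{\epsilon_0})$-cover. By Corollary~\ref{cor:gurk} every real of $V[G]$ is continuously read off some condition and hence lives on the (countable) domain of that condition; so each $Y_\zeta$ lives on a countable $w_\zeta\subseteq I^*$. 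Since cardinals are preserved and $\lambda\cdot\al0<\kappa_{\epsilon_0}=|I_{\epsilon_0}|$, pick $\alpha\in I_{\epsilon_0}\setminus\bigcup_\zeta w_\zeta$. Let $r^*\in\prod f_{\epsilon_0}$ agree with $\nu_\alpha$ wherever $\nu_\alpha(n)<f_{\epsilon_0}(n)$ and be $0$ otherwise; since $\nu_\alpha(n)<f_{\epsilon_0}(n)$ for all but finitely many $n$ (as noted after the definition of $f_\epsilon,g_\epsilon$), $r^*$ and $\nu_\alpha$ agree on a cofinite set. The main point below gives, for each $\zeta$, that $\nu_\alpha(n)\notin Y_\zeta(n)$ for all but finitely many $n$, hence the same for $r^*$; so $r^*$ is $\exists$-covered by no $Y_\zeta$ --- a contradiction. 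As $I_{\epsilon_0}\in V$, it therefore suffices to prove, for each fixed $\alpha\in I_{\epsilon_0}$: $Q_\mathfrak p$ forces that $\nu_\alpha(n)\notin Y(n)$ for all but finitely many $n$, whenever $Y$ is an $(f_{\epsilon_0},g_{\epsilon_0})$-slalom in the extension not mentioning $\alpha$.

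Fix $\alpha$. By the usual name-chasing this reduces to a density statement: given $p_0\in Q_\mathfrak p$ and a name $\n Y$ forced by $p_0$ to be such a slalom, there are $q\le p_0$ and $N$ with $q\Vdash\nu_\alpha(n)\notin\n Y(n)$ for all $n\ge N$. First normalise $p_0$: by Lemma~\ref{lem:qwh232} assume $\alpha\in\dom(p_0)$; by Corollary~\ref{cor:gurk} (a code for the set $\n Y(n)$, which is a subset of $f_{\epsilon_0}(n)\le\fmax(n)$, is bounded by $\fmax(n)^{\fmax(n)}$) assume $p_0$ rapidly reads $\n Y$, so that --- since $\n Y$ does not mention $\alpha$ --- the value of $\n Y(n)$ decided by $p_0\wedge\eta$ depends only on $\eta$ restricted to the coordinates $\ne\alpha$; also assume $\nor(p_0,n)>2$ for all $n\ge N:=\trnklg(p_0)$ and, by Lemma~\ref{lem:seperatedsupport}, that $p_0$ has separated support.

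Now build $q\le p_0$ with $\trnk(q)=\trnk(p_0)$, $\supp(q,n)=\supp(p_0,n)$, $q(m)=p_0(m)$ for $m<N$, and $q(m)\in\cSp^1(p_0(m))$ for $m\ge N$, level by level exactly in the spirit of the proof of Lemma~\ref{lem:cfalegkappa}. At level $n\ge N$ write $\cc=p_0(n)$ and let $k_0$ range over the possible values of $\nu_{\epsilon_0}(n)$. For each $k_0$: partition $\supp(\cc)\setminus\mu$ by comparing, for $\beta\in I_\epsilon$, the value of $\nu_\epsilon(n)$ with $k_0$ --- this uses separated support, just as in Lemma~\ref{lem:cfalegkappa} --- and homogenise the dependence of $\n Y(n)$ on the coordinates with larger value (whose atomic creatures are $g_{n,m}$-nice for some $m>k_0$, hence enormously big by Definition~\ref{def:initialsequence}(\ref{item:g})) via Fact~\ref{facts:214f}(\ref{item:atomlem1}); then, using decisiveness via Lemma~\ref{lem:hilfslemiz612}, pass the coordinates of $I_{\epsilon_0}\cap\supp(\cc)$ other than $\alpha$ to small successors and $w^\cc_{\alpha,k_0}$ to a correspondingly hereditarily-big successor, so that the set $Z_{n,k_0}$ of all values $\n Y(n)$ can still take (over $\eta\in\poss(p_0,n+1)$ with $\eta(n,\epsilon_0)=k_0$) has --- by the choice of parameters in Definition~\ref{def:initialsequence} --- size below the bigness now available at $\alpha$. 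A final bigness-colouring of $\val(w^\cc_{\alpha,k_0})$ by ``which element of $Z_{n,k_0}$ a value is (and $0$ if none)'' yields a homogeneous successor $v$; since its norm is still $>1$, $|\val(v)|\ge 2$, so the homogeneous colour is $0$, i.e.\ $\val(v)\cap Z_{n,k_0}=\emptyset$; set $w^{q(n)}_{\alpha,k_0}=v$. All norm losses are $<1$ per atomic creature and are bookkept exactly as in Lemma~\ref{lem:cfalegkappa}, so $\nor(q,n)>\nor(p_0,n)-1$ and $q\le p_0$ is a condition. Finally, for $n\ge N$ any $\eta\in\poss(q,n+1)$ has $\eta(n,\alpha)\in A^{q(n)}_{\alpha,\eta(n,\epsilon_0)}$, which is disjoint from $Z_{n,\eta(n,\epsilon_0)}\supseteq\n Y(n)[\eta]$; as $\{q\wedge\eta:\eta\in\poss(q,n+1)\}$ is predense below $q$, this gives $q\Vdash\nu_\alpha(n)\notin\n Y(n)$ for all $n\ge N$.

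I expect the main obstacle to be the step just sketched via Lemma~\ref{lem:hilfslemiz612}: unlike in Lemma~\ref{lem:cfalegkappa}, where the covering slalom may depend on all of $I_{\epsilon_0}$, here $\n Y$ may still depend on the coordinates of $I_{\epsilon_0}\cap\supp(\cc)$ other than $\alpha$, whose atomic creatures carry exactly the same niceness parameter $g_{n,k_0}$ as the one at $\alpha$ and so cannot simply be homogenised away; keeping the number of possible values of $\n Y(n)$ below the bigness attainable at $\alpha$ forces one to balance $K$-small successors at those coordinates against a hereditarily-$K$-big successor at $\alpha$, which is precisely the decisiveness argument of~\cite{MR2499421}. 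As the paper anticipates (``we can use very similar proofs to the ones in~\cite{MR2499421}''), that combinatorics carries over with essentially only notational changes, leaving the remaining verifications routine.
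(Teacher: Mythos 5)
Your reduction to a density statement (work in the extension, use continuous reading to see that each $Y_\zeta$ lives on a countable set, pick $\alpha\in I_{\epsilon_0}$ outside the union, then kill one name at a time) is essentially the paper's reduction, which does the same bookkeeping in $V$ via maximal antichains of conditions rapidly reading the $\n Y_\zeta$ and the $\al2$-cc; with a little care about how ``$Y$ does not mention $\alpha$'' is expressed by names, that part is fine.

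The genuine gap is in the core creature step, exactly at the point you flag and then dismiss as notational. You propose to partition $\supp(\cc)\setminus\mu$ by comparing $x(\epsilon)$ with $k_0$ (separated support, as in Lemma~\ref{lem:cfalegkappa}), homogenise away the larger-parameter coordinates, and then ``pass the coordinates of $I_{\epsilon_0}\cap\supp(\cc)$ other than $\alpha$ to small successors and $w^\cc_{\alpha,k_0}$ to a correspondingly hereditarily-big successor''. Decisiveness does not provide this: for each atomic creature $w_j$ it only yields \emph{some} constant $K_j$ with a $K_j$-small and a hereditarily $2^{K_j^m}$-big successor, and you cannot prescribe which coordinate ends up dominant. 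Lemma~\ref{lem:hilfslemiz612} orders the creatures greedily by the (uncontrolled) constants $K_j$; if $\alpha$ happens to have the smallest constant while some other coordinate of $I_{\epsilon_0}\cap\supp(\cc)$ only admits a huge $K_j$ (so even its ``small'' successor has $K_j$ values, with $K_j$ possibly of the order of $f_{n,k_0}$, far beyond $2^{K_\alpha^{g_{n,k_0}}}$), then the bigness available at $\alpha$ need not dominate $g_{n,k_0}\cdot\prod_j K_j$, and your final colouring of $\val(w^\cc_{\alpha,k_0})$ by membership in $Z_{n,k_0}$ cannot be carried out. Nor can those other $I_{\epsilon_0}$-coordinates be homogenised away by niceness, since they carry the same parameter $g_{n,k_0}$ and the relevant range (subsets of $f_{n,k_0}$ of size $<g_{n,k_0}$) exceeds their bigness $2^{g_{n,k_0}}$. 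The paper's proof is organised precisely to avoid putting $\beta$ on top: it applies the decisiveness ordering of Lemma~\ref{lem:hilfslemiz612} to \emph{all} of $u\setminus\mu$ at once (no separated support is used in this lemma), partitions the coordinates according to $\beta$'s position in that order --- not according to the values $x(\epsilon)$ versus $k_0$ --- homogenises away the coordinates \emph{above} $\beta$ in the order using their hereditary bigness, takes the union of the remaining possibilities over the coordinates \emph{below} $\beta$, and then only needs $w'_\beta$ to be big relative to $g_{n,x(\epsilon_0)}\cdot\bigl|\prod_{\alpha\in S}\val(w'_\alpha)\bigr|$, which is exactly what the ordering guarantees. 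So your argument needs to be restructured along these lines; as written, the decisive step fails.
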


\begin{proof}
  Note that it is forced that $f_{\epsilon_0}(n)/g_{\epsilon_0}(n)$ converges
  to infinity, therefore (by the usual diagonalization) it is forced that
  $\myc_{\epsilon_0}>\al0$. So if $\kappa_{\epsilon_0}=\al1$ there is nothing to do.

  So assume that $\al1\leq \lambda<\kappa_{\epsilon_0}$ and assume towards a contradiction
  that some $p_0$ forces
  $\{Y_\zeta:\, \zeta\in\lambda \}$ is an $\exists$-cover.

  For each $\zeta\in\lambda$ we
  can find a maximal antichain $A_\zeta$ below $p_0$ 
  such that every condition in $A_\zeta$ rapidly reads $Y_\zeta$.
  Let $D$ be the union of the domains of all elements of any of the $A_\zeta$ for $\zeta\in\lambda$.
  Due to $\al2$-cc, $D$ has size 
  $\al0 \times \al1 \times \lambda = \lambda$ which is less than $\kappa_{\epsilon_0}$.
  So we can pick a
  $\beta\in I_{\epsilon_0}\setminus D$ and a
  $p_1\leq p_0$ deciding the $Y_\zeta$ that $\exists$-covers $\nu_\beta$. From now
  an, we will call $Y_\zeta$ just $Y$. Pick some $p\leq p_1$ 
  that is stronger than some element of $A_\zeta$. To summarize:
  \begin{gather*}
    \text{$p$ restricted to $\dom(p)\setminus \{\beta\}$ rapidly reads $Y$. 
    (I.e., $Y$ does not depend on the values at $\beta$.)}
    \\
    p\text{ forces that }Y(n) \text{ is a subset of }f_{\epsilon_0}(n) \text{ of size less than } g_{\epsilon_0}(n) \text{ for all }n,
    \\
    p\text{ forces that there are infinitely many }n\text{ such that }\nu_\beta(n)\in Y(n)
  \end{gather*}

  We will now derive the desired contradiction: 
  We will find an $n_0\in\omega$ and a
  $q\leq p$ forcing that $\nu_\beta(n)\notin Y(n)$ for all $n\geq n_0$.

  Pick $n_0$ such that $\nor(p,n)>2$ for all $n\geq n_0$.
  We will construct $q(n)=:\cd$ by induction on $n\geq n_0$.
  Denote $p(n)$ by $\cc$.
  We set $\trnk(q)\coloneqq \trnk(p)$
  and $u^\cd\coloneqq u^\cc$ (call it $u$),
  so the supports and the trunks do not change at all, and 
  by induction $\poss(q,n)\subseteq \poss(p,n)$.
  We also set $d^\cd\coloneqq d^\cc$.  On $\mu$, nothing changes:
  For 
  $\epsilon\in u\cap \mu$ set
  $w^\cd_\epsilon\coloneqq w^\cc_\epsilon$ (call it $w_\epsilon$,
  and set $A_\epsilon=\val(w_\epsilon)=A^\cc_\epsilon=A^\cd_\epsilon$).

  It remains to construct $w^\cd_{\alpha,k}\in \cSp^1(w^\cc_{\alpha,k})$
  for $\alpha\in u\cap I_\epsilon$
  and $k\in A_{\epsilon}$. 

  Let $T$ (for ``trunk'') consist of all pairs $(\eta,x)$
  such that $\eta\in \poss(q,n)$ and
  $\eta\in\prod_{\epsilon\in u\cap \mu} A_\epsilon$.
  Note that $|T|$ is smaller than $\gmin(n)$, as already stated in~\eqref{eq:bblubb5}.

  Given $(\eta,x)$ in $T$, we apply the previous Lemma
  to $J\coloneqq  u\setminus \mu$ 
  and 
  the sequence 
  $(w^\cc_{\alpha,x(\varepsilon(\alpha))})_{\alpha  \in J}$.
  This gives us successor creatures $(w'_\alpha)_{\alpha\in J}$
  as well as an order $<_J$ of $J$.
  Partition $J$ into $S=\{i<_J \beta\}$, $\{\beta\}$, and 
  $L=\{i>_J\beta\}$.

  So (given $\eta$ and $x$), we can write $Y(n)$ (which does
  not depend on $\beta$) as function
  from $\prod_{\alpha\in L}\val(w'_\alpha)
  \times \prod_{\alpha\in S} \val(w'_\alpha)$
  to the family of subsets of $f_{n,x(\epsilon_0)}$ of size less than
  $g_{n,x(\epsilon_0)}$. Therefore we can use bigness
  to once more
  strenghen the atomic creatures indexed by $L$ and thus remove 
  the dependence of $Y(n)$ from $L$.
  We now take the union $\tilde Y$ of all the remaining possibilities for $Y(n)$
  and get a set of size less than
  $g_{n,x(\epsilon_0)} \cdot | \prod_{\alpha\in S} \val(w'_\alpha)|$,
  which is smaller than the bigness of 
  $w'_{\beta}$. So (just as in the proof of
  Lemma~\ref{lem:disjointvalue}) 
  we can strengthen this creature $w'_{\beta}$ 
  to be disjoint to $\tilde Y$.

  As usual, we now iterate this construction for all pairs $(\eta,x)\in T$.
  The resulting $n$-ml-creature $q(n)$ guarantees that $\nu_\beta(n)$
  is not in $Y(n)$, as required.
\end{proof}

\bibliographystyle{plain}
\bibliography{961}

\begin{thebibliography}{1}

\bibitem{MR1234278}
Andreas Blass.
\newblock Simple cardinal characteristics of the continuum.
\newblock In {\em Set theory of the reals ({R}amat {G}an, 1991)}, volume~6 of
  {\em Israel Math. Conf. Proc.}, pages 63--90. Bar-Ilan Univ., Ramat Gan,
  1993.

\bibitem{MR1201650}
Martin Goldstern and Saharon Shelah.
\newblock Many simple cardinal invariants.
\newblock {\em Arch. Math. Logic}, 32(3):203--221, 1993.

\bibitem{MR2425542}
Jakob Kellner.
\newblock Even more simple cardinal invariants.
\newblock {\em Arch. Math. Logic}, 47(5):503--515, 2008.

\bibitem{MR2499421}
Jakob Kellner and Saharon Shelah.
\newblock Decisive creatures and large continuum.
\newblock {\em J. Symbolic Logic}, 74(1):73--104, 2009.

\bibitem{MR1613600}
Andrzej Ros{\l}anowski and Saharon Shelah.
\newblock Norms on possibilities. {I}. {F}orcing with trees and creatures.
\newblock {\em Mem. Amer. Math. Soc.}, 141(671):xii+167, 1999.

\end{thebibliography}

\end{document}